\documentclass[preprint]{elsarticle}       
\usepackage{lipsum}
\makeatletter
\def\ps@pprintTitle{%
 \let\@oddhead\@empty
 \let\@evenhead\@empty
 \def\@oddfoot{}%
 \let\@evenfoot\@oddfoot}
\makeatother

\usepackage{lscape}
\usepackage{lineno,hyperref}
\usepackage{rotating}
\usepackage{makeidx}
\usepackage{float}
\usepackage{epsfig}
\usepackage{amsmath,amssymb}
\usepackage[latin1]{inputenc}
\usepackage{epic,eepic}
\usepackage{overpic}
\usepackage{color}
\usepackage{amsfonts}
\usepackage{graphicx}
\usepackage{mwe} 
\usepackage{subcaption}

\usepackage{algorithm}
\usepackage{algorithmicx}
\usepackage{algpseudocode}
\usepackage{enumitem}

\usepackage{listings}
            {\hfill\penalty10000\raisebox{-.09em}{\large\bf\rm $\blacksquare$}\par\medskip}

\newtheorem{theorem}{Theorem}[section]
\newtheorem{lemma}[theorem]{Lemma}
\newtheorem{proposition}[theorem]{Proposition}

\newtheorem{remark}{Remark}
\newtheorem{proof}{Proof}
[section]

\journal{}

\begin{document}

\small

\begin{frontmatter}

\title{Data-dependent approximation through RBF}
\tnotetext[label1]{The  fourth author has been supported through project CIAICO/2021/227 (Proyecto financiado por la Conselleria de Innovaci\'on, Universidades, Ciencia y Sociedad digital de la Generalitat Valenciana), by grant PID2020-117211GB-I00 and by PID2023-146836NB-I00 funded by MCIN/AEI/10.13039/501100011033.}

\author[UPCT]{Jos\'e Kuruc}
\ead{Jose.kuruc@hotmail.com}
\author[TAU]{David Levin}
\ead{levindd@gmail.com}
\author[UV]{Pep Mulet}
\ead{pep.mulet@uv.es}
\author[UPCT]{Juan Ruiz-\'Alvarez}
\ead{juan.ruiz@upct.es}
\author[UV]{Dionisio F. Y\'a\~nez}
\ead{Dionisio.Yanez@uv.es}

\date{Received: date / Accepted: date}

\address[UPCT]{Departamento de Matem\'atica Aplicada y Estad\'istica. Universidad  Polit\'ecnica de Cartagena, Cartagena (Spain).}
\address[TAU]{School of Mathematical Sciences. Tel-Aviv University, Tel-Aviv (Israel).}
\address[UV]{Departamento de Matem\'aticas. Universidad de Valencia, Valencia (Spain).}


\begin{abstract}
In this article we present a modification of classical Radial Basis Function (RBF) interpolation techniques aimed at reducing oscillations near discontinuities in one and two dimensions. Our approach introduces an adaptive mechanism by varying the shape parameter of the RBFs and making it data-dependent, forcing it to tend to infinity in the vicinity of discontinuities. This modification results in kernel functions that locally resemble 
delta functions, effectively minimizing spurious oscillations.

To detect discontinuities, we employ smoothness indicators: for grid-based data, these are computed as undivided second-order differences squared. For scattered data, we use least squares approximations of the Laplacian multiplied by the square of the mean local separation of the stencil points, and then squared. These indicators guide the adaptive adjustment of the shape parameter.

We prove the invertibility of the resulting interpolation matrix and propose a solution strategy that maintains the condition number comparable to that of a system where points near discontinuities are excluded. Numerical experiments in one and two dimensions demonstrate that the proposed method significantly reduces oscillations near discontinuities across various kernel types, whether locally or globally supported. At the same time, the interpolation accuracy and matrix conditioning in smooth regions remain essentially unchanged, as measured by the infinity norm of the error and the condition number.
\end{abstract}
\begin{keyword}
RBF interpolation \sep data-dependent approximation \sep high accuracy approximation \sep improved adaption to discontinuities \sep MLS \sep mitigation of oscillations    \sep 41A05 \sep  41A10 \sep 65D05 \sep 65M06 \sep 65N06
\end{keyword}
\end{frontmatter}

\section{Introduction and review}

Radial Basis Function  interpolation is a powerful method for approximating multivariate functions. It was initially proposed by R. Hardy in \cite{Hardy1971} using multiquadric functions and lately advanced by C. A. Micchelli in \cite{Micchelli1986}, who provided rigorous conditions for the solvability of RBF interpolation problems. In a simple way, the problem can be stated as follows: Given a set of data points $\{(\mathbf{x}_i, y_i)\}_{i=1}^N\subset \mathbb{R}^n\times \mathbb{R}$, the goal is to find a function $s(\mathbf{x})$ that interpolates the data, i.e., $s(\mathbf{x}_i) = y_i$ for all $i$. The interpolant is constructed as a linear combination of radial basis functions $\phi(\varepsilon\|\mathbf{x} - \mathbf{x}_i\|)$ centered at the data points:

\begin{equation}\label{interp1}
s(\mathbf{x}) = \sum_{i=1}^N \lambda_i \phi(\varepsilon\|\mathbf{x} - \mathbf{x}_i\|),
\end{equation}
where $\lambda_i$ are the coefficients to be determined, $\phi$ is the chosen radial basis function, and $\varepsilon$ is the {\it shape parameter}, that determines its effective support.


To find the coefficients $\lambda_i$ in RBF interpolation, we solve the following system of equations. Given the interpolation condition $s(\mathbf{x}_i) = y_i$ for all $i$, we have:
\begin{equation}\label{interp2}
s(\mathbf{x}_i) = \sum_{j=1}^N \lambda_j \phi(\varepsilon\|\mathbf{x}_i - \mathbf{x}_j\|) = y_i \quad \text{for } i = 1, 2, \ldots, N.
\end{equation}
This leads to the linear system of equations:
$$
\begin{pmatrix}\label{sys}
\phi(\varepsilon\|\mathbf{x}_1 - \mathbf{x}_1\|) & \phi(\varepsilon\|\mathbf{x}_1 - \mathbf{x}_2\|) & \cdots & \phi(\varepsilon\|\mathbf{x}_1 - \mathbf{x}_N\|) \\
\phi(\varepsilon\|\mathbf{x}_2 - \mathbf{x}_1\|) & \phi(\varepsilon\|\mathbf{x}_2 - \mathbf{x}_2\|) & \cdots & \phi(\varepsilon\|\mathbf{x}_2 - \mathbf{x}_N\|) \\
\vdots & \vdots & \ddots & \vdots \\
\phi(\varepsilon\|\mathbf{x}_N - \mathbf{x}_1\|) & \phi(\varepsilon\|\mathbf{x}_N - \mathbf{x}_2\|) & \cdots & \phi(\varepsilon\|\mathbf{x}_N - \mathbf{x}_N\|)
\end{pmatrix}
\begin{pmatrix}
\lambda_1 \\
\lambda_2 \\
\vdots \\
\lambda_N
\end{pmatrix}
=
\begin{pmatrix}
y_1 \\
y_2 \\
\vdots \\
y_N
\end{pmatrix}.
$$


In matrix form, this can be written as:
\begin{equation}\label{sistema}
\mathbf{A} \boldsymbol{\lambda} = \mathbf{y},
\end{equation}
where $\mathbf{A}$ is the $N \times N$ matrix with entries $A_{ij} = \phi(\varepsilon\|\mathbf{x}_i - \mathbf{x}_j\|)$, $\boldsymbol{\lambda} = (\lambda_1, \lambda_2, \ldots, \lambda_N)^T$ is the vector of coefficients, and $\mathbf{y} = (y_1, y_2, \ldots, y_N)^T$ is the vector of data values. All the functions $\phi$ used in this paper determine a strictly positive definite matrix $\mathbf{A}$ (see e.g.\cite{Fasshauer2007,Wendland}).


The choice of the radial basis function $\phi(r\varepsilon)$ significantly affects the properties of the interpolant. Common choices include those presented in Table \ref{tabla1nucleos}.

\begin{table}[h!]
\centering
\begin{tabular}{lll}
\hline
$\phi(r\varepsilon)$& RBF &                                                          \\ \hline
$e^{-  (\varepsilon r)^2}$ & Gaussian $\mathcal{C}^\infty$ & G                                          \\
$\left(1 + (\varepsilon r)^2 \right)^{-1/2}$ & Inverse MultiQuadratic $C^\infty$ & IMQ  \\
$e^{-  \varepsilon r} \left( 1 +   \varepsilon r \right)$ & Mat\'ern $\mathcal{C}^2$ & M2 \\
$e^{-  \varepsilon r} \left( 3 + 3  \varepsilon r +   (\varepsilon r)^2 \right)$ & Mat\'ern $\mathcal{C}^4$ & M4 \\
$(1 -  \varepsilon r)^4_+ \left( 4 \varepsilon r + 1 \right)$ & Wendland $\mathcal{C}^2$ & W2 \\
$(1 -  \varepsilon r)^6_+ \left( 35 (\varepsilon r)^2 + 18 \varepsilon r + 3 \right)$ & Wendland $\mathcal{C}^4$ & W4 \\ \hline
\end{tabular}
\caption{Examples of RBFs that resemble a discrete delta function when $\varepsilon\to\infty$.}\label{tabla1nucleos}
\end{table}

In Table \ref{tabla1nucleos}, $\varepsilon$ is a {\it shape parameter} that controls the width of the basis function. The variable $r$ represents the Euclidean distance $\|\mathbf{x} - \mathbf{x}_i\|$ between points in any dimensional space. As in \cite{Fasshauer2007}, we use the shape parameter $\varepsilon$ applied directly to $r$, which has the effect of returning {\it flat} basis functions if $\varepsilon$ is decreased or more peaked/localized basis functions if it is increased. It can be easily checked that this idea applies to all the $\phi(r\varepsilon)$ functions presented in Table \ref{tabla1nucleos}.


The smoothness of the interpolant depends on the smoothness of the chosen basis function $\phi$. For example:

\begin{itemize}
    \item \textbf{Gaussian and Multiquadric:} These functions are infinitely differentiable, leading to very smooth interpolants.
    \item \textbf{Wendland and Mat\'ern:} These functions are only finitely differentiable, resulting in less smooth interpolants but with compact support, which can be advantageous for large datasets or data with discontinuities.
\end{itemize}

The order of accuracy in smooth regions is also influenced by the choice of $\phi$. Infinitely differentiable functions like the Gaussian can achieve high accuracy, while compactly supported functions like the Wendland basis may have lower accuracy but better computational efficiency.

In this manuscript we are interested in accurately reconstructing functions with jump discontinuities, which is a challenge that has long been recognized in numerical analysis. The main reason is that classical global or high-order linear approximation methods, including those based on RBFs, are known to suffer from the Gibbs phenomenon when applied to such functions. This results in oscillatory artifacts near discontinuities, which degrade the quality of the approximation. Over the past few decades, a wide range of strategies have been developed to address this issue (see, e.g., \cite{Amir2018,Arandiga2005,Bozzini2013,Bozzini2014,Crampton2005,Gout2008,Lenarduzzi2017}). While many of these approaches focus on general interpolation or surface recovery, relatively few have targeted RBF-based methods specifically. Notable exceptions include the work of Jung \cite{Jung2007}, who demonstrated that using multiquadric RBFs with adaptively vanishing shape parameters near discontinuities can significantly reduce oscillations. This technique effectively linearizes the interpolant locally, improving stability and accuracy. A more advanced solution involves the use of variably scaled RBFs, where discontinuous scaling functions are introduced near jump locations. This leads to interpolants that are themselves discontinuous, allowing for more faithful recovery of the underlying function \cite{DeMarchi2019,Romani2018,Erb2019}. However, these methods typically require prior knowledge of the discontinuity locations, making edge detection a necessary preprocessing step. In parallel, data-dependent reconstruction techniques such as ENO and WENO have been adapted to the RBF framework. Guo and Jung \cite{Guo2017a,Guo2017b} proposed RBF-ENO and RBF-WENO schemes that optimize the shape parameter to enhance performance near discontinuities. Building on these ideas, Ar\`andiga et al. \cite{Arandiga2020} developed a multiquadric RBF-WENO method that combines multiple RBFs (2 or 3) to achieve high-order accuracy while controlling oscillations.


This paper focuses on the adaption of general RBF interpolation to the presence of discontinuities in the data using a strategy that is different from all the ones mentioned previously. 

The article is organized as follows. Section 2 introduces the data-dependent modification applied to the shape parameter, analyzes its impact on the affected kernel functions, and discusses the required properties of both the smoothness indicators and the composing function used to adapt the interpolation near discontinuities. It also presents the final form of the interpolation expression that achieves this adaptivity. Section 3 provides a theoretical analysis of the invertibility of the modified interpolation matrix. Section 4 explores the transformation of the interpolation system into a block matrix and examines its condition number. 
Section 5 discusses the design of smoothness indicators in one and two dimensions, both for grid-based and mesh-free data. Section 6 presents numerical experiments that support the theoretical findings, demonstrating the effectiveness of the proposed method. Finally, Section 7 summarizes some conclusions.

%
%
%
%
%
%
%
%

\section{A new data-dependent approach}



In this section, we introduce an automatic strategy to locally modify the shape parameter $\varepsilon$ based on the smoothness of the data. The goal is to reduce the effective support of the kernel functions near discontinuities (where oscillations typically arise in the classical approach) while preserving the standard behavior in smooth regions. This adaptive modification aims to suppress interpolation artifacts without affecting accuracy elsewhere. A key challenge of this approach lies in ensuring the invertibility of the resulting interpolation matrix while keeping the condition number largely unaffected. Since the shape parameter is modified depending on the smoothness of the data, the structure of the matrix changes, and its original properties, such as symmetry and positive definiteness, may no longer hold. We address this issue in detail in the next section.

To begin, we consider the general RBF interpolation framework introduced in Equation (\ref{interp1}) of the previous section, now explicitly incorporating a point-dependent shape parameter $\tilde{\varepsilon}_i$. Accordingly, we propose solving the system corresponding to the following data-dependent interpolation formulation:

\begin{equation}\label{int}
\tilde s(\mathbf{x}) = \sum_{i=1}^N \lambda_i \phi(\tilde\varepsilon_i\|\mathbf{x} - \mathbf{x}_i\|).
\end{equation}

The central idea behind the modified RBF interpolation proposed in this work is to restrict the influence of kernel functions near discontinuities, effectively limiting their contribution to the point at which they are centered. This is achieved by introducing a data-dependent modification of the shape parameter $\varepsilon$, which is designed to tend to infinity at locations close to discontinuities, while remaining unmodified at smooth zones. As a result, the affected kernels become highly localized, resembling 
delta functions, and their interaction with neighboring points is significantly reduced. This localization leads to a structural change in the interpolation matrix: the columns corresponding to these modified kernels are replaced by basis functions with minimal support. In all the kernels presented in Table \ref{tabla1nucleos}, this results in columns that are zero everywhere except for a single one on the diagonal. Although this modification helps suppress oscillations near discontinuities, as shown in the numerical experiments, it may raise concerns about the invertibility of the interpolation matrix and the preservation of its original properties. These issues are examined in detail in the following section.

The modification of the shape parameter that we propose is
\begin{equation}\label{tildegamma}
\tilde\varepsilon_i=\varepsilon\frac{1}{c+\psi(I_i)},
\end{equation}
with $\psi: \mathbb{R}\to \mathbb{Z}$ defined as
\begin{equation}\label{fI}
\psi(x)=\text{round}\left(e^{-(C x)^t}\right),
\end{equation}
where $\text{round}$ is the rounding to the nearest integer operation, $c$ is a parameter to avoid divisions by zero, $C>0$ and $t\ge 1$ are real constants that can be tuned, and that help to attain kernels that resemble the 
delta function:
\begin{equation}\label{deltafunction}
\delta(x) =
\begin{cases}
1, & x = 0, \\
0, & x \neq 0,
\end{cases}
\end{equation}
close to the discontinuities, while leaving the shape parameter essentially the same at smooth zones. In the numerical experiments we set $c=10^{-16}$, $C=10$ and $t=2$. The $I_i$ are smoothness indicators, i.e. a local numerical measure of the smoothness of the data around $\mathbf{x}_i$. The points used to calculate the smoothness indicator for each $i\in\{1,\hdots,N\}$ are denoted by stencil $\mathcal{S}_i\subset \{\mathbf{x}_j\}_{j=1}^N.$
 Let $h$ denote the fill distance (see, e.g., \cite{Fasshauer2007,Wendland}). Then, in simple terms, $I_i$ is a locally supported operator that should satisfy the following properties:

\begin{enumerate}[label={\bfseries P\arabic*}]
\item\label{P1sm1d} The order of a smoothness indicator that is free of discontinuities is $h^m$, i.e.
$$I_{i}=\mathcal{O}(h^m) \,\, \text{if}\,\, f \,\, \text{is smooth in the considered support of $I_i$}.$$
\item\label{P2sm1d}   When a discontinuity crosses the stencil $\mathcal{S}_{i}$ then
$I_{i} =\mathcal{O}(1) \,\, \text{as}\,\, h\to 0.$

\end{enumerate}


Now we can state the following proposition:

\begin{proposition}\label{t1}
Assuming the expression for $\tilde\varepsilon$ given in (\ref{tildegamma}) with the properties $P1$ and $P2$ given for the smoothness indicator $I$, and $C=\frac{1}{h},$ we choose RBF kernels that satisfy:
\begin{itemize}
\item The function $\phi(r\tilde\varepsilon_i)\approx\phi(r\varepsilon)$  at smooth zones.
\item The function $\phi(r\tilde\varepsilon_i)\approx\delta(r)$, if the support of the smoothness indicator crosses a jump discontinuity in the function, where $\delta(r)$ is defined in Eq. \eqref{deltafunction}.


\end{itemize}
\end{proposition}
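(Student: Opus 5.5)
The plan is to evaluate the composite function $\psi(I_i)=\text{round}\bigl(e^{-(I_i/h)^t}\bigr)$, with $C=1/h$, in the two regimes described by \ref{P1sm1d} and \ref{P2sm1d}, and then read off the resulting $\tilde\varepsilon_i$ from (\ref{tildegamma}). The proof reduces to the asymptotics of the exponent $(CI_i)^t=(I_i/h)^t$ as $h\to 0$, followed by a direct look at the kernels of Table \ref{tabla1nucleos}.

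\textbf{Smooth zones.} By \ref{P1sm1d}, $I_i=\mathcal{O}(h^m)$, so $CI_i=\mathcal{O}(h^{m-1})$. If $m>1$, then $(CI_i)^t\to0$ and $e^{-(CI_i)^t}\to1$. For $h$ small enough the exponential is therefore in $(1/2,1]$, and rounding gives $\psi(I_i)=1$. Then $\tilde\varepsilon_i=\varepsilon/(c+1)$, and since $c=10^{-16}$ this equals $\varepsilon(1+\mathcal{O}(c))$. Continuity of $\phi$ gives $\phi(r\tilde\varepsilon_i)\approx\phi(r\varepsilon)$. Because of the rounding, the approximation is uniform in $r$: the only perturbation is the fixed relative factor $1/(1+c)$. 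I will state explicitly that $m>1$ is needed; for the undivided second differences of Section 5 one has $m=4$ after squaring. If $m=1$, one instead needs the hidden constant times $C h$ to be below $(\ln 2)^{1/t}$.

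\textbf{Discontinuities.} By \ref{P2sm1d}, $I_i\ge\kappa>0$ independently of $h$, so $CI_i\ge\kappa/h\to\infty$. Hence $e^{-(CI_i)^t}\to0$, and for small $h$ it is below $1/2$, so $\psi(I_i)=0$ and $\tilde\varepsilon_i=\varepsilon/c$. Letting $c\to0$ makes $\tilde\varepsilon_i\to\infty$. I will then check each kernel in Table \ref{tabla1nucleos}:
\begin{itemize}
\item At $r=0$, the kernel value is $\phi(0)$, which is $1$ for G, IMQ, M2 and W2. For M4 and W4 it is $3$, and after normalizing by $\phi(0)$ it is again $1$.
\item For $r>0$, $\phi(r\tilde\varepsilon_i)\to0$. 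For G, M2 and M4 this follows from exponential decay, for IMQ from the decay $\sim 1/(\tilde\varepsilon_i r)$, and for the Wendland kernels it holds exactly once $\tilde\varepsilon_i r\ge1$.
\end{itemize}
In practice $r$ is bounded below by the separation distance $q$ of the nodes, so at all other nodes the convergence is uniform. This yields $\phi(r\tilde\varepsilon_i)\approx\delta(r)$ in the sense of (\ref{deltafunction}), which is exactly what matters for the columns of the interpolation matrix.

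\textbf{Main obstacle.} The delicate step is making ``$\approx$'' precise, because $c$ is a fixed tuning parameter rather than a limit. I would phrase the second bullet as a limit in $c\to0^+$, or as a quantitative bound at the nodes: $|\phi(q\varepsilon/c)|$ is below machine precision for $c=10^{-16}$. I would also note the normalization by $\phi(0)$ and the condition $m>1$ as the precise hypotheses under which the proposition holds.
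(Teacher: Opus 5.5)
Your argument is correct. It does not compete with a proof in the paper but fills a gap: the paper gives no proof of Proposition~\ref{t1}. It treats the statement as a design requirement and only points to Table~\ref{tabla1nucleos} for kernels that ``satisfy these conditions''. Your reduction to the size of $I_i/h$ is what makes the statement checkable. Concretely, $\psi(I_i)=1$ once $I_i/h<(\ln 2)^{1/t}$ and $\psi(I_i)=0$ once $I_i/h>(\ln 2)^{1/t}$, and the kernel-by-kernel check then finishes the job.

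This also buys a few things the paper does not have:
\begin{itemize}
\item It shows that $C=1/h$ is only one admissible scaling: any $C(h)\to\infty$ with $C(h)h^m\to0$ works.
\item It shows that with the fixed $C=10$ used in the experiments, the jump regime becomes a condition on the jump size, $10\kappa>(\ln 2)^{1/t}$, rather than an asymptotic statement in $h$.
\end{itemize}

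The hidden hypotheses you flag are genuine:
\begin{itemize}
\item $m>1$ is needed.
\item $\phi(0)=3$ for M4 and W4. This contradicts the paper's remark that the modified columns carry ``a single one on the diagonal''. It is harmless for the invertibility results (Lemma~\ref{lema1}, \eqref{block1}), where one simply gets $3\mathbf{I}$ instead of $\mathbf{I}$.
\item The $\delta$-behaviour is a statement in $c\to0^+$ (or a floating-point statement), not in $h\to0$. With $\varepsilon\propto 1/h$, the smallest off-diagonal argument $q\tilde\varepsilon_i=q\varepsilon/c$ does not grow as $h\to0$.
\end{itemize}

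Two points in your write-up need tightening.

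First, P2 as stated, $I_i=\mathcal{O}(1)$, is only an upper bound and does not give $I_i\ge\kappa>0$. You should declare explicitly that you read P2 as ``bounded away from zero''. Without that reading the second bullet does not follow, since an indicator that is $\mathcal{O}(1)$ but small would give $\psi(I_i)=1$. The reading does hold for the 1D undivided differences: there $I_i=[f]^2+\mathcal{O}(h)$ at the two nodes adjacent to a jump of size $[f]$.

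Second, for IMQ the off-diagonal entries are only algebraically small, $\phi(r\varepsilon/c)\approx c/(r\varepsilon)$. At the nodes the bound is therefore $\mathcal{O}\bigl(c/(q\varepsilon)\bigr)$. Whether this is below machine precision depends on $q\varepsilon\gtrsim 1$, which is not guaranteed for Halton points, where the separation distance $q$ can be much smaller than $h$. State the bound in that form instead of claiming machine precision uniformly over all kernels in Table~\ref{tabla1nucleos}.
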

Examples of kernels satisfying these conditions are presented in Table \ref{tabla1nucleos}, also introduced and analyzed in \cite{Fasshauer2007}. For some of these functions we need to introduce the cutoff function $(\cdot)_+:\mathbb{R} \to \mathbb{R}$, that is defined as:
 \begin{equation*}
 (x)_+=\max(x, 0).
 \end{equation*}

As it has been already mentioned, in radial basis function  interpolation, the choice of the shape parameter $\varepsilon$ critically affects the behavior of the interpolant. When $\varepsilon$ is large, the basis functions become highly localized, leading to an interpolant that resembles a series of sharp spikes centered at the data points, a phenomenon known as the \textit{bed of nails effect}. A detailed discussion of this behavior can be found in \cite{Fasshauer2007}. Thus, a problem that can be found with the interpolation in (\ref{int}) is that it typically leads to {\it nails} close to the discontinuity. Thus, once we have solved the system, we propose to eliminate those nails, so the resulting approximation will only interpolate at the rest of the points, where the smoothness indicators detect a smooth zone. To do so, after solving the system for \eqref{int}, we propose the following interpolation:

\begin{equation}\label{int1}
\tilde s_{f,X}(\mathbf{x})  = \sum_{i=1}^N \lambda_i \psi( I_i)\phi(\tilde\varepsilon_i\|\mathbf{x} - \mathbf{x}_i\|).
\end{equation}
The resulting interpolation maintains desirable properties even near discontinuities. By locally increasing the shape parameter $\varepsilon$ in regions close to a discontinuity, the corresponding radial basis functions become highly localized, limiting their influence to a small neighborhood around their centers. As a result, removing these localized kernels has minimal impact on the overall approximation, since their contribution is confined to a very narrow region. The main idea behind the modification proposed is to remove the kernels that are close to the discontinuities and that contribute the most to the oscillations. 

The first property that can be deduced from the expression of the new data-dependent approximant is that it preserves the continuity of the classical operator. This is stated in the following proposition.

\begin{proposition}
Let $\phi(\|\cdot\|)\in \mathcal{C}^\nu$ be a radial basis function. Then the operator defined in Eq. \eqref{int1}, $\tilde s_{f,X}\in \mathcal{C}^\nu$. 
\end{proposition}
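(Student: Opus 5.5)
The plan is to observe that, once the data are fixed, the operator in Eq.~\eqref{int1} is a finite linear combination of translated and dilated copies of the single radial function $\phi(\|\cdot\|)$ with constant coefficients. Regularity then follows from closure of $\mathcal{C}^\nu$ under dilation, translation and finite linear combination. The only point needing care is that the data-dependent quantities $\tilde\varepsilon_i$, $\psi(I_i)$ and $\lambda_i$ are numbers, not functions of $\mathbf{x}$.

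\textbf{Step 1: the coefficients are constants.} The smoothness indicators $I_i$ are computed from the data values on the stencils $\mathcal{S}_i$, and so they do not depend on the evaluation point $\mathbf{x}$. Hence $\psi(I_i)\in\mathbb{Z}$ is a fixed integer; by the rounding in \eqref{fI} it is in fact $0$ or $1$, since $e^{-(CI_i)^t}\in(0,1]$. Likewise $\tilde\varepsilon_i=\varepsilon/(c+\psi(I_i))$ is a fixed positive real number, finite thanks to $c>0$. The coefficients $\lambda_i$ are the fixed solution of the linear system associated with \eqref{int}. I will take the unique solvability of that system as given; it is the subject of the following section, and in any case the proposition concerns the resulting function $\tilde s_{f,X}$ once the $\lambda_i$ exist.

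\textbf{Step 2: each term is $\mathcal{C}^\nu$.} Write $\Phi(\mathbf{x})=\phi(\|\mathbf{x}\|)$, which is $\mathcal{C}^\nu$ on $\mathbb{R}^n$ by hypothesis. Then
\[
\phi(\tilde\varepsilon_i\|\mathbf{x}-\mathbf{x}_i\|)=\Phi\bigl(\tilde\varepsilon_i(\mathbf{x}-\mathbf{x}_i)\bigr),
\]
because $\tilde\varepsilon_i>0$ lets us move it inside the norm. This is the composition of $\Phi$ with the affine map $\mathbf{x}\mapsto\tilde\varepsilon_i(\mathbf{x}-\mathbf{x}_i)$, which is $\mathcal{C}^\infty$. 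By the chain rule every partial derivative of order $|\alpha|\le\nu$ equals $\tilde\varepsilon_i^{|\alpha|}(\partial^\alpha\Phi)(\tilde\varepsilon_i(\mathbf{x}-\mathbf{x}_i))$, which exists and is continuous. So each term is in $\mathcal{C}^\nu$.

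\textbf{Step 3: conclude.} A finite sum of $\mathcal{C}^\nu$ functions with scalar coefficients $\lambda_i\psi(I_i)$ is $\mathcal{C}^\nu$, so $\tilde s_{f,X}\in\mathcal{C}^\nu$. Terms with $\psi(I_i)=0$ simply drop out, and this cannot reduce the regularity.

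\textbf{Main obstacle.} The only genuine subtlety is Step 1. The rounding in $\psi$ is discontinuous, so one must stress that it acts on the data, through $I_i$, and not on $\mathbf{x}$; otherwise one might fear that the data-dependent construction introduces discontinuities. A secondary remark is that the hypothesis should be read as $\Phi=\phi(\|\cdot\|)\in\mathcal{C}^\nu(\mathbb{R}^n)$, including at the origin. This holds for all the kernels in Table~\ref{tabla1nucleos} with the stated $\nu$, and the paper's hypothesis is phrased this way.
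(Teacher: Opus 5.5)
Your proof is correct and is essentially the paper's argument: the paper simply notes that $\tilde s_{f,X}$ is a finite combination of copies of $\phi$ evaluated at different points, and that the data-dependent quantities $\lambda_i$, $\psi(I_i)$ and $\tilde\varepsilon_i$ do not depend on $\mathbf{x}$. Your Steps 1--3 only make explicit the translation/dilation, chain-rule and finite-sum details that the paper leaves implicit.
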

The proof is direct from the expression of $\tilde s_{f,X}$, as it is a combination of functions $\phi$ evaluated at different points. Also, the modifications introduced in the data-dependent operator do not depend on the variable $\mathbf{x}$.

In Section~\ref{si_sec}, we present several strategies for designing smoothness indicators and describe in detail the specific indicators used in the numerical experiments.


\section{Analysis of the invertibility of the new interpolation matrix}
As described in previous section, we plan to make the radial basis function  interpolation adaptive near discontinuities by modifying the shape parameter locally in a non-linear way.  As a consequence, some of the original kernels (which correspond to columns of the interpolation matrix) are replaced by modified basis functions with locally adjusted shape parameters. In the limit, the modified shape parameter will force the kernels placed close to the discontinuity to resemble the delta function in (\ref{deltafunction}), which will be in the end removed, enabling the interpolant to approximate better the non-smooth behavior while remaining within the RBF framework.

It is known \cite{Micchelli1986, Fasshauer2007} that all the kernels presented in Table \ref{tabla1nucleos} have properties that ensure the invertibility of the corresponding interpolation matrix, guaranteeing the existence of a unique solution to the interpolation problem. However, when the shape parameter is altered locally, i.e., different values are used for different data points, this effectively modifies certain columns of the interpolation matrix. Such changes may disrupt the strict positive definiteness of the matrix, potentially affecting its invertibility. Therefore, it becomes essential to analyze whether the locally modified interpolation matrix retains its non-singular nature, and under what conditions this property is preserved.

\subsection{Invertibility of the system}

We prove the following lemma to demonstrate that our new system has a unique solution. The main idea of our new algorithm is to modify several columns of the matrix depending on the smoothness of the underlying data. As a result, the columns corresponding to data where the smoothness indicator is large will be zero except at the diagonal. This occurs because the modification of the shape parameter near discontinuities aims to reduce the effective support of the kernels close to the discontinuities, effectively transforming them into delta functions. The use of the smoothness indicator function in (\ref{tildegamma}) and (\ref{fI}) with a sufficiently large constant $C$ (we use $C=10$ in our experiments) guaranties this behavior.

\begin{lemma}\label{lema1}
Let $\mathbf{A}$ be a square matrix $N\times N$ defined in Eq. \eqref{sistema} (symmetric and positive definite) and $\mathbf{A}(:,j)$ the columns of the matrix, with $j=1,\hdots,N$ and let $\mathbf{B}$ be the matrix with same components of $\mathbf{A}$ except one column which is modified by zero elements in all the entries different of the diagonal and a non-zero element in the diagonal, i.e. there exist $j_0\in\{1,\dots,N\}$ and $\lambda\neq 0$ such that
$$\mathbf{B}(i,j_0)=0,\,\, i\neq j_0, \quad \mathbf{B}(j_0,j_0)=\lambda,\quad \mathbf{B}(:,j)=\mathbf{A}(:,j),\,\, j\neq j_0,$$
then the determinant of the matrix $\mathbf{B}$ is different from zero, i.e.
$$|\mathbf{B}|\neq 0.$$
\end{lemma}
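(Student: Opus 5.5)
Since column $j_0$ of $\mathbf{B}$ has a single nonzero entry, I will compute $|\mathbf{B}|$ by Laplace (cofactor) expansion along that column. The only surviving term is
$$|\mathbf{B}| = (-1)^{j_0+j_0}\,\lambda\,\det \mathbf{B}_{j_0j_0} = \lambda \det \mathbf{B}_{j_0j_0},$$
where $\mathbf{B}_{j_0j_0}$ is the $(N-1)\times(N-1)$ submatrix obtained by deleting row $j_0$ and column $j_0$. Deleting column $j_0$ removes the only modified column. Hence $\mathbf{B}_{j_0j_0}$ coincides with $\mathbf{A}_{j_0j_0}$, the corresponding principal submatrix of $\mathbf{A}$.

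The remaining task is to show that $\det \mathbf{A}_{j_0j_0}\neq 0$. Since $\mathbf{A}$ is symmetric positive definite, every principal submatrix is also symmetric positive definite. Indeed, for any nonzero $\mathbf{v}\in\mathbb{R}^{N-1}$, let $\mathbf{w}\in\mathbb{R}^N$ be $\mathbf{v}$ with a zero inserted in position $j_0$. Then $\mathbf{v}^T\mathbf{A}_{j_0j_0}\mathbf{v}=\mathbf{w}^T\mathbf{A}\mathbf{w}>0$. Therefore $\det \mathbf{A}_{j_0j_0}>0$, being the product of positive eigenvalues.

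Alternatively, $\mathbf{A}_{j_0j_0}$ is itself the interpolation matrix of the same strictly positive definite kernel on the point set $\{\mathbf{x}_j\}_{j\neq j_0}$. Its nonsingularity then follows directly from the results cited in the introduction \cite{Micchelli1986,Fasshauer2007}.

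Combining the two steps gives $|\mathbf{B}|=\lambda\det\mathbf{A}_{j_0j_0}\neq 0$, because $\lambda\neq0$. There is no real obstacle here. The only point needing care is that the inherited positive definiteness applies to the principal minor, that is, deleting the same index from both rows and columns, which is exactly what the cofactor expansion along the diagonal entry produces.

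The argument extends by induction to several modified columns $j_1,\dots,j_k$, which is the case relevant to the method. Expanding successively along each modified column leaves $\prod_l\lambda_l$ times a principal minor of $\mathbf{A}$, and that minor is again positive definite.
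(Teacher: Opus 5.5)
Your proof is correct and is essentially the paper's own argument. The paper expands along column $j_0$ to get $|\mathbf{B}|=\lambda\,|\mathbf{A}^{j_0}|$, where $\mathbf{A}^{j_0}$ is the principal minor obtained by deleting $\mathbf{x}_{j_0}$; this minor is nonsingular as the interpolation matrix on the reduced node set, and your quadratic-form argument simply makes that explicit. Your remark on several columns, expanding successively so that a principal minor of $\mathbf{A}$ itself remains, is actually a cleaner justification of Proposition~\ref{prop1} than literally inducting on the lemma, since the lemma's symmetric positive definite hypothesis no longer holds once one column has been modified.
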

\begin{proof}
We develop the determinant of the matrix $\mathbf{B}$ by the column $j_0$. For this, we denote:
$$\mathbf{A}^{j_0}=\begin{pmatrix}
\phi(\|\mathbf{x}_1 - \mathbf{x}_1\|) & \phi(\|\mathbf{x}_1 - \mathbf{x}_2\|) & \cdots & \phi(\|\mathbf{x}_1 - \mathbf{x}_{j_0-1}\|) & \phi(\|\mathbf{x}_1 - \mathbf{x}_{j_0+1}\|) & \cdots& \phi(\|\mathbf{x}_1 - \mathbf{x}_N\|) \\
\phi(\|\mathbf{x}_2 - \mathbf{x}_1\|) & \phi(\|\mathbf{x}_2 - \mathbf{x}_2\|) & \cdots & \phi(\|\mathbf{x}_2 - \mathbf{x}_{j_0-1}\|) & \phi(\|\mathbf{x}_2 - \mathbf{x}_{j_0+1}\|) & \cdots& \phi(\|\mathbf{x}_2 - \mathbf{x}_N\|) \\
\vdots & \vdots & \ddots & \vdots & \vdots & \ddots & \vdots \\
\phi(\|\mathbf{x}_{j_0-1} - \mathbf{x}_1\|) & \phi(\|\mathbf{x}_{j_0-1} - \mathbf{x}_2\|) & \cdots & \phi(\|\mathbf{x}_{j_0-1} - \mathbf{x}_{j_0-1}\|) & \phi(\|\mathbf{x}_{j_0-1} - \mathbf{x}_{j_0+1}\|) & \cdots& \phi(\|\mathbf{x}_{j_0-1} - \mathbf{x}_N\|) \\
\phi(\|\mathbf{x}_{j_0+1} - \mathbf{x}_1\|) & \phi(\|\mathbf{x}_{j_0+1} - \mathbf{x}_2\|) & \cdots & \phi(\|\mathbf{x}_{j_0+1} - \mathbf{x}_{j_0-1}\|) & \phi(\|\mathbf{x}_{j_0+1} - \mathbf{x}_{j_0+1}\|) & \cdots& \phi(\|\mathbf{x}_{j_0+1} - \mathbf{x}_N\|) \\
\vdots & \vdots & \ddots & \vdots & \vdots & \ddots & \vdots \\
\phi(\|\mathbf{x}_N - \mathbf{x}_1\|) & \phi(\|\mathbf{x}_N - \mathbf{x}_2\|) & \cdots & \phi(\|\mathbf{x}_N - \mathbf{x}_{j_0-1}\|) & \phi(\|\mathbf{x}_N - \mathbf{x}_{j_0+1}\|) & \cdots& \phi(\|\mathbf{x}_N - \mathbf{x}_N\|) 
\end{pmatrix}.$$
Note that $\mathbf{A}^{j_0}$ is the matrix used when we solve the RBF problem taking the nodes $\{\mathbf{x}_{1},\dots,\mathbf{x}_{j_0-1},\mathbf{x}_{j_0+1},\dots,\mathbf{x}_{N}\}$ and therefore it is invertible. Thus,
\begin{equation*}
\begin{split}
|\mathbf{B}|=\lambda |\mathbf{A}^{j_0}|\neq 0.
\end{split}
\end{equation*}
\end{proof}
\begin{proposition}\label{prop1}
Let $\mathbf{A}$ be a square matrix $N\times N$ defined in Eq. \eqref{sistema} (symmetric and positive definite) and $\mathbf{A}(:,j)$ the columns of the matrix, with $j=1,\hdots,N$ and let $\mathbf{B}$ be the matrix with same components of $\mathbf{A}$ except some columns  which are modified by zero elements in all the entries different of the diagonal and non-zero elements in the diagonal, i.e. there exist $j_m$, and $\lambda_m\neq 0$, $m=1,\dots,k$, $k\in\{1,\dots,N\}$ such that
$$\mathbf{B}(i,j_m)=0,\,\, i\neq j_m, \quad \mathbf{B}(j_m,j_m)=\lambda_m,\quad \mathbf{B}(:,j)=\mathbf{A}(:,j),\,\, j\neq j_m,\,\,m=1,\dots,k,$$
then the determinant of the matrix $\mathbf{B}$ is different from zero, i.e.
$$|\mathbf{B}|\neq 0.$$
\end{proposition}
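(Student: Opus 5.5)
The plan is to expand the determinant of $\mathbf{B}$ successively along the modified columns $j_1,\dots,j_k$, exactly as in the proof of Lemma \ref{lema1}, and then use the invertibility of a principal submatrix of $\mathbf{A}$.

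\textbf{Step 1 (peeling off one column).} Expand $|\mathbf{B}|$ along column $j_1$. Its only nonzero entry is the diagonal entry $\lambda_1$ at position $(j_1,j_1)$, and the cofactor sign there is $(-1)^{2j_1}=1$. Hence
$$|\mathbf{B}|=\lambda_1|\mathbf{B}^{j_1}|,$$
where $\mathbf{B}^{j_1}$ is obtained by deleting row $j_1$ and column $j_1$.

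\textbf{Step 2 (structure is preserved).} Deleting row and column $j_1$ keeps the structure of the remaining modified columns. For $m\ge 2$, the column of $\mathbf{B}^{j_1}$ coming from $j_m$ is still zero off the diagonal, and its diagonal entry is still $\lambda_m$: its index shifts by at most one, but row and column shift in the same way, so the entry stays on the diagonal. The unmodified columns are the corresponding columns of $\mathbf{A}$ restricted to the rows $\neq j_1$.

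\textbf{Step 3 (induction on $k$).} Apply Steps 1--2 $k$ times, formally by induction on $k$, with Lemma \ref{lema1} as the case $k=1$. This gives
$$|\mathbf{B}|=\Big(\prod_{m=1}^k\lambda_m\Big)\,|\mathbf{A}_J|,$$
where $J=\{1,\dots,N\}\setminus\{j_1,\dots,j_k\}$ and $\mathbf{A}_J$ is the principal submatrix of $\mathbf{A}$ with rows and columns indexed by $J$.

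\textbf{Step 4 (invertibility of $\mathbf{A}_J$).} This is the main point. $\mathbf{A}_J$ is the RBF interpolation matrix for the nodes $\{\mathbf{x}_j\}_{j\in J}$, so it is strictly positive definite. Equivalently, any principal submatrix of a symmetric positive definite matrix is positive definite: for $v\neq0$ supported on $J$, $v^T\mathbf{A}v>0$. Therefore $|\mathbf{A}_J|>0$. Together with $\lambda_m\neq 0$ for all $m$, this yields $|\mathbf{B}|\neq0$.

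\textbf{Degenerate case.} If $k=N$, then $J$ is empty and $\mathbf{B}$ is diagonal with nonzero entries, so $|\mathbf{B}|=\prod_{m=1}^N\lambda_m\neq0$. We adopt the convention that an empty determinant equals $1$.

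\textbf{Alternative route.} To avoid the index bookkeeping, one can take a permutation matrix $\mathbf{P}$ that moves the indices $j_1,\dots,j_k$ to the end, applied symmetrically. Then
$$\mathbf{P}\mathbf{B}\mathbf{P}^T=\begin{pmatrix}\mathbf{A}_J & 0\\ \ast & \mathrm{diag}(\lambda_1,\dots,\lambda_k)\end{pmatrix},$$
which is block lower triangular. Since $|\mathbf{P}|^2=1$, we get $|\mathbf{B}|=|\mathbf{A}_J|\prod_m\lambda_m\neq0$ directly.
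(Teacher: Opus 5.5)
Your proposal is correct and follows the paper's route. The paper proves this proposition only as ``direct by induction and Lemma~\ref{lema1}'', i.e.\ by repeatedly expanding along a modified column as in that lemma's cofactor argument. Your Steps 1--3 spell out the bookkeeping the paper leaves implicit, namely that deleting row and column $j_1$ preserves the structure of the remaining modified columns. Your Step 4 and your permutation argument add nothing beyond the paper's own ideas: they are the reduced-node invertibility of $\mathbf{A}^{j_0}$ used in Lemma~\ref{lema1} and the block lower-triangular form of $\mathbf{M}$ in Section 4.
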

\begin{proof}
It is direct by induction and Lemma \ref{lema1}.
\end{proof}


\section{Analysis of the condition number of the matrix in the new system} 

Let us consider what happens when modifying the interpolation matrix by replacing certain columns (originally corresponding to the kernels in Table \ref{tabla1nucleos}) with columns of zeros except for a one at the diagonal. As we have mentioned before, these columns are selected based on a smoothness indicator: when the indicator is large, the shape parameter of the Gaussian is increased significantly, resulting in a kernel that behaves like a delta function, effectively producing a column with zeros and a single one on the diagonal.

Since the RBF interpolation is a linear combination of coefficients $\lambda_i$ multiplied by the kernel functions evaluated at each point, we can rearrange the interpolation system so that the points (and thus the Gaussian kernels) associated with large smoothness indicators are placed at the end of the linear combination. This leads to a block structure in the global interpolation matrix, where the identity matrix appears in the lower-right corner, and the upper-left block represents the RBF interpolation matrix excluding the points affected by large smoothness indicators. The resulting matrix can be expressed as
\begin{equation}\label{block1}
{\bf M} = \begin{bmatrix}
{\bf \tilde A} & 0 \\
{\bf C} & {\bf I}
\end{bmatrix},
\end{equation}
where: 
\begin{itemize}
    \item ${\bf \tilde A}$ is the interpolation matrix without considering those points where the smoothness indicator is large. Thus, this matrix has the invertibility properties associated with the chosen kernel. For example, for the Gaussian kernel, it is symmetric and positive definite.
    \item ${\bf C}$ is the result of particularizing the kernels at smooth zones at the points where the smoothness index is large. 
    \item $\bf I$ is the identity matrix.
\end{itemize}

From the expression in (\ref{block1}) we can also see that the determinant is different from zero since $\det(\bf{M})=\det( \bf{\tilde{A}})\det(\bf{I})=\det( \bf{\tilde{A}})$.

The structure of {\bf M} in (\ref{block1}) also allows us 
to analyze the conditioning of the system and to simplify the solution process by decoupling the well-behaved identity block from the more complex interpolation block. Consider the system that we want to solve, where the block matrix in (\ref{block1}) is involved
\[
{\bf M} \begin{bmatrix} \mathbf{u} \\ \mathbf{u}' \end{bmatrix} = \begin{bmatrix} \mathbf{z} \\ \mathbf{z}' \end{bmatrix}.
\]
In this case, we can 
decouple the system. First,  we solve for $\mathbf{u}$ from the first block row:
\[
{\bf \tilde A} \mathbf{u} = \mathbf{z}  \quad \Rightarrow \quad \mathbf{u} = {\bf \tilde A}^{-1} \mathbf{z} .
\]
Then, we replace into the second block row:
\begin{equation}\label{flechas}
\bf{C} \mathbf{u} + \bf{I} \mathbf{u}' = \mathbf{z}' \quad \Rightarrow \quad \bf{C} {\bf \tilde A}^{-1} \mathbf{z} + \mathbf{u}' = \mathbf{z}' 
\quad \Rightarrow \quad \mathbf{u}' = \mathbf{z}'- \bf{C} {\bf \tilde A}^{-1} \mathbf{z} \Rightarrow \quad \mathbf{u}' = \mathbf{z}' - \bf{C} \boldsymbol{u},
\end{equation}
where $\boldsymbol{u}={\bf \tilde A}^{-1} \mathbf{z}$ is the solution of the RBF interpolation problem,
\begin{equation}\label{sistema_b}
\mathbf{\tilde A} \boldsymbol{u} = \mathbf{z},
\end{equation}
considering only those data points at smooth zones, i.e. where the smoothness indicator is small (of order $\mathcal{O}(h^n)$).
Now it must be clear that:
\begin{itemize}
    \item The reduced system for $\bf{u}'$ is perfectly conditioned.
    \item Decoupling the system ensures that the modifications introduced in the RBF interpolation matrix (specifically the replacement of certain kernel columns with identity-like columns based on a smoothness indicator) allow us to focus solely on the condition number of a reduced RBF interpolation matrix ${\bf \tilde A}$. This reduced matrix corresponds to the interpolation problem where data points with large smoothness indicators have been excluded. If this matrix has good invertibility properties, then the new system inherits these properties.
\end{itemize}

\begin{lemma}
  The condition of the matrix $M$ in the $\infty$-norm is bounded
  as
  \begin{equation*}
    \kappa_{\infty}({\bf M})\leq \kappa_{\infty}({\bf\tilde A})
    \max(1, \frac{1+\Vert {\bf C}\Vert_{\infty}}{\Vert{\bf \tilde
        A}\Vert_{\infty}})
    \max(1, \Vert {\bf C}\Vert_{\infty}+\frac{1}{\Vert{\bf \tilde
        A}^{-1}\Vert_{\infty}})
  \end{equation*}    
  \end{lemma}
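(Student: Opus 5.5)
We bound $\Vert{\bf M}\Vert_\infty$ and $\Vert{\bf M}^{-1}\Vert_\infty$ separately, using the block structure (\ref{block1}) and the explicit inverse that follows from the decoupling in (\ref{flechas}), and then multiply the two bounds.

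For ${\bf M}$ itself, the $\infty$-norm is the maximum absolute row sum. Rows in the first block row have sums bounded by $\Vert{\bf\tilde A}\Vert_\infty$, since the zero block adds nothing. Rows in the second block row have sums bounded by $\Vert{\bf C}\Vert_\infty+1$. Hence
\begin{equation*}
\Vert{\bf M}\Vert_\infty=\max\bigl(\Vert{\bf\tilde A}\Vert_\infty,\,1+\Vert{\bf C}\Vert_\infty\bigr)=\Vert{\bf\tilde A}\Vert_\infty\max\Bigl(1,\frac{1+\Vert{\bf C}\Vert_\infty}{\Vert{\bf\tilde A}\Vert_\infty}\Bigr).
\end{equation*}

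For the inverse, the computation (\ref{flechas}) shows that
\begin{equation*}
{\bf M}^{-1}=\begin{bmatrix}{\bf\tilde A}^{-1} & 0\\ -{\bf C}{\bf\tilde A}^{-1} & {\bf I}\end{bmatrix},
\end{equation*}
which can be checked directly by multiplication. We again bound by row sums. The first block rows give at most $\Vert{\bf\tilde A}^{-1}\Vert_\infty$. The second block rows give at most $\Vert{\bf C}{\bf\tilde A}^{-1}\Vert_\infty+1\le\Vert{\bf C}\Vert_\infty\Vert{\bf\tilde A}^{-1}\Vert_\infty+1$, using submultiplicativity of the induced norm. Factoring out $\Vert{\bf\tilde A}^{-1}\Vert_\infty$ gives
\begin{equation*}
\Vert{\bf M}^{-1}\Vert_\infty\le\Vert{\bf\tilde A}^{-1}\Vert_\infty\max\Bigl(1,\Vert{\bf C}\Vert_\infty+\frac{1}{\Vert{\bf\tilde A}^{-1}\Vert_\infty}\Bigr).
\end{equation*}

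Multiplying the two bounds and recognizing $\kappa_\infty({\bf\tilde A})=\Vert{\bf\tilde A}\Vert_\infty\Vert{\bf\tilde A}^{-1}\Vert_\infty$ yields the stated inequality. There is no real obstacle here. The only points requiring care are that the row-sum norm of a block matrix is the maximum over its block rows of the row sums of the concatenated blocks, and that $\Vert{\bf\tilde A}\Vert_\infty$ and $\Vert{\bf\tilde A}^{-1}\Vert_\infty$ are nonzero. The latter holds because ${\bf\tilde A}$ is invertible, which was established earlier via Proposition \ref{prop1}.
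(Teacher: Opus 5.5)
Your proof is correct and follows the argument the paper sketches. The paper's proof only cites the explicit block inverse ${\bf M}^{-1}$ and the block row-sum bound $\Vert (A_{i,j})\Vert_{\infty}\leq \max_{i}\sum_{j}\Vert A_{i,j}\Vert_{\infty}$; you carried out those two estimates for ${\bf M}$ and ${\bf M}^{-1}$ and multiplied them. One small remark: the invertibility of ${\bf\tilde A}$ (needed to divide by $\Vert{\bf\tilde A}\Vert_\infty$ and $\Vert{\bf\tilde A}^{-1}\Vert_\infty$) follows most directly from the strict positive definiteness of the kernel on the retained nodes, as the paper notes when introducing ${\bf\tilde A}$, rather than from Proposition \ref{prop1}.
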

  \begin{proof}
    Based on the fact that 
\begin{equation*}
{\bf M}^{-1} = \begin{bmatrix}
{\bf \tilde A}^{-1} & 0 \\
-{\bf C} {\bf \tilde A}^{-1}& {\bf I}
\end{bmatrix},
\end{equation*}
and that $\Vert (A_{i,j})\Vert_{\infty}\leq \max_{i}\sum_{j}\Vert
A_{i,j}\Vert_{\infty}$, where $(A_{i,j})$ denotes a block matrix.
\end{proof}

\begin{remark}
In all the numerical experiments that we have performed we have found that it is not necessary to use the block structure of the interpolation matrix, as the system matrix with the data-dependent modification presents condition numbers that are very similar to those of the original matrix. The reader can refer to the numerical experiments to check the condition numbers obtained.
\end{remark}

\section{Discussion about the smoothness indicators in one and several dimensions, and for regular and mesh-free data}\label{si_sec}

A smoothness indicator must satisfy the two conditions P1 and P2 described in Section 2, which ensure that the indicator vanishes for smooth data and grows appropriately near discontinuities. A valid choice for uniform univariate data in this context is the squared undivided second-order difference, given by
\begin{equation}\label{ud1}
\beta = \left( f(x+h) - 2f(x) + f(x-h) \right)^2,
\end{equation}
which has size $\mathcal{O}(h^4)$ for sufficiently smooth functions. In the same sense, for data in two dimensions using a uniform grid, we can use the squared five points stencil Laplacian as smoothness indicator, which is given by,
\begin{equation}\label{ud2}
\beta = \left( f(x+h, y) + f(x-h, y) + f(x, y+h) + f(x, y-h) - 4f(x, y) \right)^2.
\end{equation}
For non-uniform data in one or several dimensions, one can use a different strategy. 
Let $X = \{\mathbf{x}_i\}_{i=1}^N \subset \Omega \subset \mathbb{R}^2$ be a set of scattered nodes and let $u : \Omega \to \mathbb{R}$ denote pointwise samples of a sufficiently smooth function $u$. For each centre $\mathbf{x}_i \in X$ choose a local stencil of $K$ neighbours $S(\mathbf{x}_i) = \{\mathbf{x}_{i_j}\}_{j=1}^K\subset X$. We seek weights $w_j$ such that
\[
\Delta u(\mathbf{x}_i) \approx \sum_{j=1}^K w_j u(\mathbf{x}_{i_j}),
\]
and that this formula is exact for a finite polynomial space. A similar idea is used, for example, in \cite{LiIto2006} (see pages 68-69) to compute surface derivatives in several dimensions. Let $P_m$ denote the polynomials in two variables of total degree $\leq m$. Imposing exactness on a basis $\{p_\ell\}_{\ell=1}^M$ of $P_m$ yields the moment conditions
\[
\sum_{j=1}^K w_j p_\ell(\mathbf{x}_{i_j}) = \Delta p_\ell(\mathbf{x}_i), \quad \ell = 1, \dots, M,
\]
where $M = \dim P_m$. In matrix form this constraint reads
\[
\mathbf{V} \mathbf{w} = \mathbf{b},
\]
with $\mathbf{V} \in \mathbb{R}^{M \times K}$, $\mathbf{V}_{\ell,j} = p_\ell(x_j)$, $\mathbf{w} = (w_j)_{j=1}^K$ and $b_\ell = \Delta p_\ell(x_0)$. When $K = M$ and $\mathbf{V}$ is nonsingular, the weights are obtained by direct solve $w = \mathbf{V}^{-1} \mathbf{b}$. More robustly, one typically takes $K < M$ and computes weights by solving the normal equations
\[
\mathbf{V}^\top\mathbf{V} \mathbf{w} = \mathbf{V}^\top\mathbf{b}, \]
provided that $\text{rank}(V^\top V)=K$. Small Tikhonov regularization may be added if $\mathbf{V}^\top \mathbf{V}$ is ill-conditioned (a tolerance for the condition number of $10^{-10}$ is used in the numerical experiments):
\[
(\mathbf{V}^\top \mathbf{V} + \lambda \mathbf{I}) \mathbf{w} = \mathbf{V}^\top\mathbf{b},\quad \lambda>0. \]
For $m = 2$ and univariate data, the monomial basis that we use is $\{1, x-x_0, (x-x_0)^2\}$. Then we have $M = 3$ and the right-hand side is
\[
\mathbf{b} = ( 0, 0, 2)^\top.
\]
Analogously, for data in 2D and $m = 2$, we use the monomial basis $\{1, (x-x_0), (y-y_0), (x-x_0)^2, (x-x_0)(y-y_0), (y-y_0)^2\}$, we have $M = 6$ and the right-hand side is
\[
\mathbf{b} = (0, 0, 0, 2, 0, 2)^\top,
\]
because $\Delta 1 = \Delta x = \Delta y = 0$, $\Delta x^2 = \Delta y^2 = 2$, $\Delta(xy) = 0$. The resulting weight vector $w$ is then applied to the data values $u(\mathbf{x}_{i_j})$ on the stencil to produce the discrete Laplacian approximation $\sum_j w_j u(\mathbf{x}_{i_j})$.

As we want to obtain something equivalent to the undivided second differences but for sparse data in several dimensions, we need to define a characteristic scale for the stencil. To do so, we compute a local fill-scale $h_{\text{loc}}(\mathbf{x}_i)$. A simple and effective choice is the arithmetic mean of the Euclidean distances from $\mathbf{x}_i$ to the other stencil nodes (excluding the zero self-distance):
\[
h_{\text{loc}}(\mathbf{x}_i) = \frac{1}{K-1} \sum_{j=1}^K \|\mathbf{x}_{i_j} - \mathbf{x}_i\|.
\]
This $h_{\text{loc}}$ approximates the typical spacing of the stencil. Now, we can multiply the computed $\Delta$-approximation by $h_{\text{loc}}^2$ to obtain
\[
I_i[u] := h_{\text{loc}}(\mathbf{x}_i)^4\left( \sum_{j=1}^K w_j u(\mathbf{x}_{i_j})\right)^2,
\]
which has the same units as a discrete undivided second-difference operator.

A good remark here is that using this approach with 3 points in the univariate case or 5 points in the bivariate case with a uniform grid will return the expressions of the smoothness indicators in (\ref{ud1}) and (\ref{ud2}) respectively. Also, a similar approach can be used for more than 2 dimensions.

\section{Numerical experiments}
In this section, we present a series of numerical experiments designed to evaluate the performance of the proposed data-dependent interpolation algorithms. We begin by considering smooth functions to assess the accuracy and stability of the method when using different RBF kernels, and to compare the results with those obtained using classical linear interpolation techniques. In addition to the interpolation accuracy using the infinity norm of the error, we analyze the condition number of the resulting interpolation matrices to understand the numerical stability of the approach. Subsequently, we examine the adaptive behavior of the data-dependent algorithms near jump discontinuities in both one and two dimensions. For all the experiments, we consider both gridded data and scattered data generated using Halton sequences, allowing us to test the robustness of the method across different point distributions.
\subsection{Behaviour at smooth zones}
Let us consider the smooth function defined by
\[
f(x) = 1+\sin(\pi x), \quad x \in \mathbb{R}.
\]
The domain of interest is the interval $[0, 1]$, but we perform the approximation in the interval $[-1,2]$ to avoid the numerical effects that might appear at the boundary. We begin with a set of $N = 3(2^l+1)$ initial nodes in the large interval, either uniform or Halton points. From these, we generate 10 intermediate evaluation points between each pair of consecutive initial nodes. Then, we analize the approximation in the domain of interest $[0, 1]$. The approximation is performed using two distinct strategies:
\begin{itemize}
    \item Classical RBF interpolation, denoted as RBF$_{\mathcal{H}}$.
    \item Data-dependent RBF interpolation, denoted as DD-RBF$_{\mathcal{H}}$.
\end{itemize}
Here, $\mathcal{H} \in \{\text{G}, \text{IMQ},\text{W2}, \text{W4}, \text{M2}, \text{M4} \}$ refers to the kernel type: Gaussian, inverse multiquadric, Wendland $\mathcal{C}^2$, Wendland $\mathcal{C}^4$, Mat\'ern $\mathcal{C}^2$ or Mat\'ern $\mathcal{C}^4$  respectively, as specified in Table \ref{tabla1nucleos}.

Let $\{z_j\}_{j=0}^{2^{l}}$ be a set of evaluation points in the interval $[0,1]$ defined by $z_j = \frac{j}{2^l}$. For each level $l$, we compute the interpolation $\mathcal{I}^l(z_j)$ using both classical and data-dependent RBF methods. The pointwise error is given by
\[
e^l_j = \left| f(z_j) - \mathcal{I}^l(z_j) \right|.
\]
We define the maximum absolute error 
as follows:
\begin{equation}
\label{eq:error_rate}
\begin{aligned}
\text{E}_l &= \max_{0 \leq j \leq 2^l} e^l_j.
\end{aligned}
\end{equation}

We adopt the following notation for the interpolation operators:
\begin{itemize}
    \item $\textsc{RBF}_{\mathcal{H}}$: classical RBF method and kernel $\mathcal{H}$.
    \item $\textsc{DD-RBF}_{\mathcal{H}}$: data-dependent variant of the above.
\end{itemize}
In the case of the classical methods, the shape parameter $\varepsilon^l$ is fixed for all $l$ and has been chosen so that the interpolation matrix is not close to singular and provides good accuracy results. In this experiment, we use the following shape parameters either for gridded data or Halton points:
\[
\varepsilon^l = 
\begin{cases}
\frac{0.8}{h_l} & \text{if } \mathcal{H} = \text{G or IMQ}, \\
\frac{0.1}{h_l} & \text{if } \mathcal{H} = \text{W2, W4, M2 or M4}.
\end{cases}
\]
For the data-dependent methods, the same shape parameter is used, and modified as described in previous sections relying on the numerical smoothness of the data. The centers used for interpolation coincide with the original data points. As mentioned before, we have set $C=10$ and $t=2$ in (\ref{fI}). For the configuration of the Halton set of points, we use the following sentence in Matlab:
\vspace{0.5cm}

\begin{lstlisting}
p = haltonset(1,'Skip',0,'Leap',38);
\end{lstlisting}
\vspace{0.5cm}

Table~\ref{texp1} presents the observed errors in the infinity norm for uniform grid spacing and Table \ref{texp2} for Halton points. In both cases, we can see that the performance of classical and data-dependent methods is similar at smooth zones. The condition number $\kappa$ of the interpolation matrices in (\ref{sistema})  is also presented in the tables, and we can see that they are similar for classical and data-dependent approaches.

\begin{table}[!ht]
\begin{center}
\begin{tabular}{lccccccccccc}
& \multicolumn{2}{c}{RBF$_{\text{G}}$} & &   \multicolumn{2}{c}{DD-RBF$_{\text{G}}$}\\ \cline{1-3} \cline{5-7} $l$ & $\text{E}_l$  &$\kappa$ & &$\text{E}_l$ &  $\kappa$ \\
7 &1.6081e-06 & 2.3617e+01 && 1.6081e-06 & 2.3617e+01 && \\
8 &1.6037e-06 & 2.3621e+01 && 1.6037e-06 & 2.3621e+01 && \\
9 &1.6027e-06 & 2.3621e+01 && 1.6027e-06 & 2.3621e+01 && \\
10 &1.6024e-06 & 2.3622e+01 && 1.6024e-06 & 2.3622e+01 && \\
\hline
\hline
& \multicolumn{2}{c}{RBF$_{\text{IMQ}}$} & &   \multicolumn{2}{c}{DD-RBF$_{\text{IMQ}}$}\\ \cline{1-3} \cline{5-7} $l$ & $\text{E}_l$ & $\kappa$ & &$\text{E}_l$ & $\kappa$ \\
7 &3.1180e-04 & 2.2682e+02 && 3.1180e-04 & 2.2682e+02 && \\
8 &2.6783e-04 & 2.5543e+02 && 2.6783e-04 & 2.5543e+02 && \\
9 &2.3499e-04 & 2.8403e+02 && 2.3499e-04 & 2.8403e+02 && \\
10 &2.0946e-04 & 3.1264e+02 && 2.0946e-04 & 3.1264e+02 && \\
\hline
\hline
& \multicolumn{2}{c}{RBF$_{\text{W2}}$} & &   \multicolumn{2}{c}{DD-RBF$_{\text{W2}}$}\\ \cline{1-3} \cline{5-7} $l$ & $\text{E}_l$ & $\kappa$ & &$\text{E}_l$ & $\kappa$ \\
7 &1.8735e-04 & 1.3331e+03 && 1.8735e-04 & 1.3331e+03 && \\
8 &1.8720e-04 & 1.3333e+03 && 1.8720e-04 & 1.3333e+03 && \\
9 &1.8716e-04 & 1.3334e+03 && 1.8716e-04 & 1.3334e+03 && \\
10 &1.8715e-04 & 1.3334e+03 && 1.8715e-04 & 1.3334e+03 && \\
\hline
\hline
& \multicolumn{2}{c}{RBF$_{\text{W4}}$} & &   \multicolumn{2}{c}{DD-RBF$_{\text{W4}}$}\\ \cline{1-3} \cline{5-7} $l$ & $\text{E}_l$ & $\kappa$ & &$\text{E}_l$ & $\kappa$ \\
7 &7.8142e-06 & 8.1500e+03 && 7.8142e-06 & 8.1500e+03 && \\
8 &7.8080e-06 & 8.1523e+03 && 7.8080e-06 & 8.1523e+03 && \\
9 &7.8065e-06 & 8.1529e+03 && 7.8065e-06 & 8.1529e+03 && \\
10 &7.8061e-06 & 8.1530e+03 && 7.8061e-06 & 8.1530e+03 && \\
\hline
\hline
& \multicolumn{2}{c}{RBF$_{\text{M2}}$} & &   \multicolumn{2}{c}{DD-RBF$_{\text{M2}}$}\\ \cline{1-3} \cline{5-7} $l$ & $\text{E}_l$ & $\kappa$ & &$\text{E}_l$ & $\kappa$ \\
7 &5.5208e-07 & 4.7533e+05 && 5.5208e-07 & 4.7533e+05 && \\
8 &5.2753e-07 & 4.7945e+05 && 5.2753e-07 & 4.7945e+05 && \\
9 &5.2158e-07 & 4.8057e+05 && 5.2158e-07 & 4.8057e+05 && \\
10 &5.2011e-07 & 4.8086e+05 && 5.2011e-07 & 4.8086e+05 && \\
\hline
\hline
& \multicolumn{2}{c}{RBF$_{\text{M4}}$} & &   \multicolumn{2}{c}{DD-RBF$_{\text{M4}}$}\\ \cline{1-3} \cline{5-7} $l$ & $\text{E}_l$ & $\kappa$ & &$\text{E}_l$ & $\kappa$ \\
7 &1.4238e-10 & 4.7319e+08 && 1.4238e-10 & 4.7319e+08 && \\
8 &1.3282e-10 & 4.7921e+08 && 1.3282e-10 & 4.7921e+08 && \\
9 &1.3057e-10 & 4.8087e+08 && 1.3057e-10 & 4.8087e+08 && \\
10 &1.3002e-10 & 4.8131e+08 && 1.3002e-10 & 4.8131e+08 && \\
\hline
\end{tabular}
\end{center}
\caption{Errors and condition numbers using classical and data-dependent RBF methods for the test function $f(x)=1+\sin(\pi x)$ in the interval $x\in[0,1]$ with a uniform grid.}\label{texp1}
\end{table}

\begin{table}[!ht]
\begin{center}
\begin{tabular}{lccccccccccc}
& \multicolumn{2}{c}{RBF$_{\text{G}}$} & &   \multicolumn{2}{c}{DD-RBF$_{\text{G}}$}\\ \cline{1-3} \cline{5-7} $l$ & $\text{E}_l$  &$\kappa$ & &$\text{E}_l$ &  $\kappa$ \\
7 &4.3581e-09 & 1.9497e+10 && 4.3581e-09 & 1.9497e+10 && \\
8 &2.8719e-10 & 1.0023e+12 && 2.8719e-10 & 1.0023e+12 && \\
9 &2.6317e-12 & 2.8770e+14 && 2.6317e-12 & 2.8770e+14 && \\
10 &2.1210e-12 & 4.0384e+14 && 2.1210e-12 & 4.0384e+14 && \\
\hline
\hline
& \multicolumn{2}{c}{RBF$_{\text{IMQ}}$} & &   \multicolumn{2}{c}{DD-RBF$_{\text{IMQ}}$}\\ \cline{1-3} \cline{5-7} $l$ & $\text{E}_l$ & $\kappa$ & &$\text{E}_l$ & $\kappa$ \\
7 &1.5617e-04 & 3.2110e+06 && 1.5617e-04 & 3.2110e+06 && \\
8 &1.4711e-04 & 9.0396e+06 && 1.4711e-04 & 9.0396e+06 && \\
9 &1.0478e-04 & 3.2937e+07 && 1.0478e-04 & 3.2937e+07 && \\
10 &7.3167e-05 & 3.7340e+07 && 7.3167e-05 & 3.7340e+07 && \\
\hline
\hline
& \multicolumn{2}{c}{RBF$_{\text{W2}}$} & &   \multicolumn{2}{c}{DD-RBF$_{\text{W2}}$}\\ \cline{1-3} \cline{5-7} $l$ & $\text{E}_l$ & $\kappa$ & &$\text{E}_l$ & $\kappa$ \\
7 &1.4794e-04 & 4.4056e+05 && 1.4794e-04 & 4.4056e+05 && \\
8 &1.9051e-04 & 6.2437e+05 && 1.9051e-04 & 6.2437e+05 && \\
9 &2.2980e-04 & 9.5161e+05 && 2.2980e-04 & 9.5161e+05 && \\
10 &1.9139e-04 & 9.5167e+05 && 1.9139e-04 & 9.5167e+05 && \\
\hline
\hline
& \multicolumn{2}{c}{RBF$_{\text{W4}}$} & &   \multicolumn{2}{c}{DD-RBF$_{\text{W4}}$}\\ \cline{1-3} \cline{5-7} $l$ & $\text{E}_l$ & $\kappa$ & &$\text{E}_l$ & $\kappa$ \\
7 &5.5502e-06 & 3.1982e+07 && 5.5502e-06 & 3.1982e+07 && \\
8 &7.0760e-06 & 5.3908e+07 && 7.0760e-06 & 5.3908e+07 && \\
9 &7.6237e-06 & 1.0139e+08 && 7.6237e-06 & 1.0139e+08 && \\
10 &6.3418e-06 & 1.0141e+08 && 6.3418e-06 & 1.0141e+08 && \\
\hline
\hline
& \multicolumn{2}{c}{RBF$_{\text{M2}}$} & &   \multicolumn{2}{c}{DD-RBF$_{\text{M2}}$}\\ \cline{1-3} \cline{5-7} $l$ & $\text{E}_l$ & $\kappa$ & &$\text{E}_l$ & $\kappa$ \\
7 &5.5926e-07 & 1.5037e+08 && 5.5926e-07 & 1.5037e+08 && \\
8 &5.7734e-07 & 2.2060e+08 && 5.7734e-07 & 2.2060e+08 && \\
9 &6.5517e-07 & 3.4031e+08 && 6.5517e-07 & 3.4031e+08 && \\
10 &5.3433e-07 & 3.4183e+08 && 5.3433e-07 & 3.4183e+08 && \\
\hline
\hline
& \multicolumn{2}{c}{RBF$_{\text{M4}}$} & &   \multicolumn{2}{c}{DD-RBF$_{\text{M4}}$}\\ \cline{1-3} \cline{5-7} $l$ & $\text{E}_l$ & $\kappa$ & &$\text{E}_l$ & $\kappa$ \\
7 &1.5038e-10 & 1.7881e+12 && 1.5038e-10 & 1.7881e+12 && \\
8 &1.2085e-10 & 3.1676e+12 && 1.2085e-10 & 3.1676e+12 && \\
9 &1.3127e-10 & 6.0655e+12 && 1.3127e-10 & 6.0655e+12 && \\
10 &1.0592e-10 & 6.1064e+12 && 1.0592e-10 & 6.1064e+12 && \\
\hline
\end{tabular}
\end{center}
\caption{Errors and condition numbers using classical and data-dependent RBF methods for the test function $f(x)=1+\sin(\pi x)$ in the interval $x\in[0,1]$ using Halton points.}\label{texp2}
\end{table}

\subsection{Mitigation of the oscillations close to jump discontinuities in the function for univariate data}
In this subsection, we perform an experiment using the piecewise smooth function:
\begin{equation}\label{funciong}
g(x)=\begin{cases}
\sin(\pi x), & x\leq  2/3, \\
1-\sin(\pi x), & x> 2/3.
\end{cases}
\end{equation}
As in the previous section, the domain of interest is the interval $[0, 1]$. In this case, we have performed the approximation directly in this interval. We use a set of $N = 32$ initial nodes, either uniform or Halton points. From these, we generate 10 intermediate evaluation points between each pair of consecutive initial nodes. 

As in the previous section, the shape parameter $\varepsilon$ is fixed and has been chosen so that the interpolation matrix is not close to singular. For the experiments conducted with gridded data, shown in Figure~\ref{exp1}, we use the following shape parameters:
\[
\varepsilon = 
\begin{cases}
\frac{0.5}{h} & \text{if } \mathcal{H} = \text{G or IMQ}, \\
\frac{0.1}{h} & \text{if } \mathcal{H} = \text{W2, W4,M2 or M4}.
\end{cases}
\]
Similarly, for the experiments with non-uniform data using Halton points, presented in Figure~\ref{exp2}, we apply the following shape parameter settings:
\[
\varepsilon = 
\begin{cases}
\frac{0.8}{h} & \text{if } \mathcal{H} = \text{G or IMQ}, \\
\frac{0.1}{h} & \text{if } \mathcal{H} = \text{W2, W4, M2 or M4}.
\end{cases}
\]
For the data-dependent methods, the same shape parameter is used, and modified as described in previous sections relying on the numerical smoothness of the data. As before, the centers used for interpolation coincide with the original data points and we have set $C=10$ and $t=2$ in (\ref{fI}). The configuration of the Halton points is the same as before.

Figure~\ref{exp1} presents the results obtained using both the classical and data-dependent RBF interpolation algorithms across all kernel types listed in Table~\ref{tabla1nucleos}. As mentioned before, these experiments were conducted using gridded data. In each plot, the original data centers are marked with solid red dots, while the output of the classical algorithm is shown with a solid blue line, following the notation specified in Table~\ref{tabla1nucleos} for the legend of each plot. The results of the data-dependent algorithm are displayed with a solid black line. Across all cases, it is evident that the oscillations present in the classical interpolation near discontinuities are significantly reduced when using the data-dependent approach. As a trade-off, a slight smoothing of the discontinuity can be observed, although this effect remains mild and does not substantially affect the overall accuracy of the interpolation. Additionally, it can be seen that the interpolation property is no longer strictly preserved near the discontinuity, as the data-dependent algorithms approximate rather than interpolate in those regions. This behavior is expected and was explicitly considered during the design of the adaptive strategy. Table~\ref{tabla_condicion_unif} displays the condition numbers corresponding to the experiments shown in Figure~\ref{exp1}. As observed, the condition numbers for the classical and data-dependent methods are comparable, with the data-dependent methods generally exhibiting slightly lower values.

Figure~\ref{exp2} shows the results of a similar experiment conducted with non-uniform data, specifically using Halton points. As in the case of gridded data, we observe that the data-dependent RBF algorithms effectively reduce the oscillations that appear near discontinuities when using the classical approach. The results are consistent across all kernel types, confirming the robustness of the adaptive strategy. As before, a mild smoothing of the discontinuity is present, which is an expected consequence of the data-dependent formulation. Table~\ref{tabla_condicion_no_unif} reports the condition numbers obtained from the experiments with non-uniform data shown in Figure~\ref{exp2}. As in the uniform case, the condition numbers for the data-dependent methods remain close to those of the classical counterparts, with a slight overall reduction observed in the data-dependent formulation.

	\begin{figure}[htbp!]
\begin{center}
		\begin{tabular}{cc}
	\includegraphics[width=8cm, height=5.15cm]{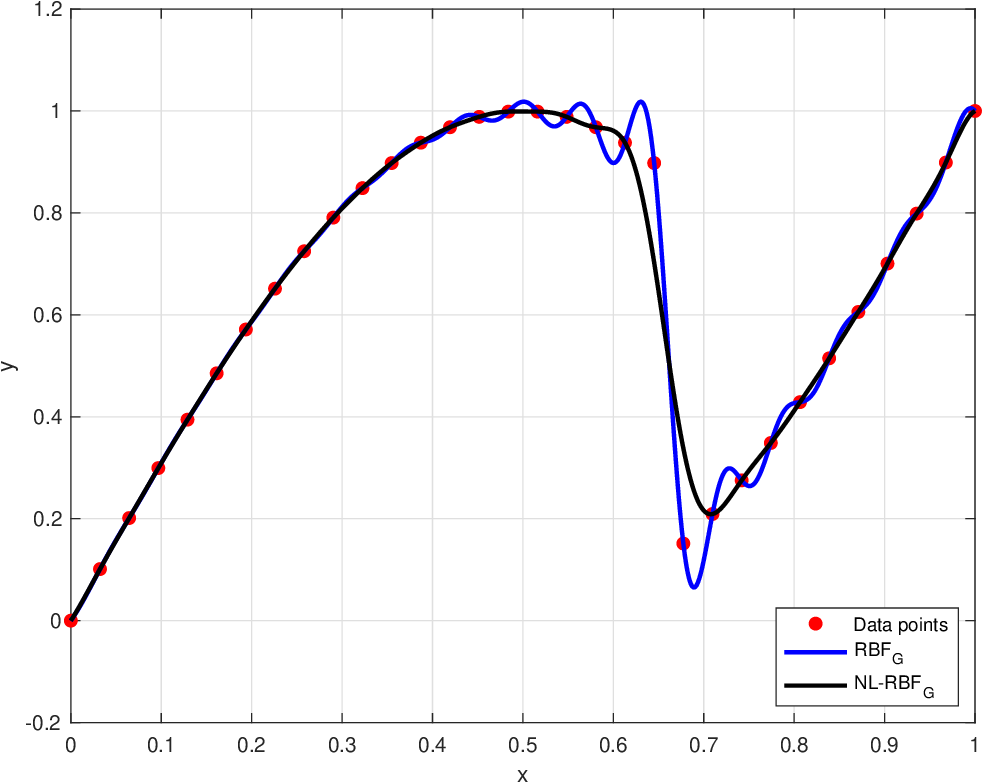} & 	\includegraphics[width=8cm, height=5.15cm]{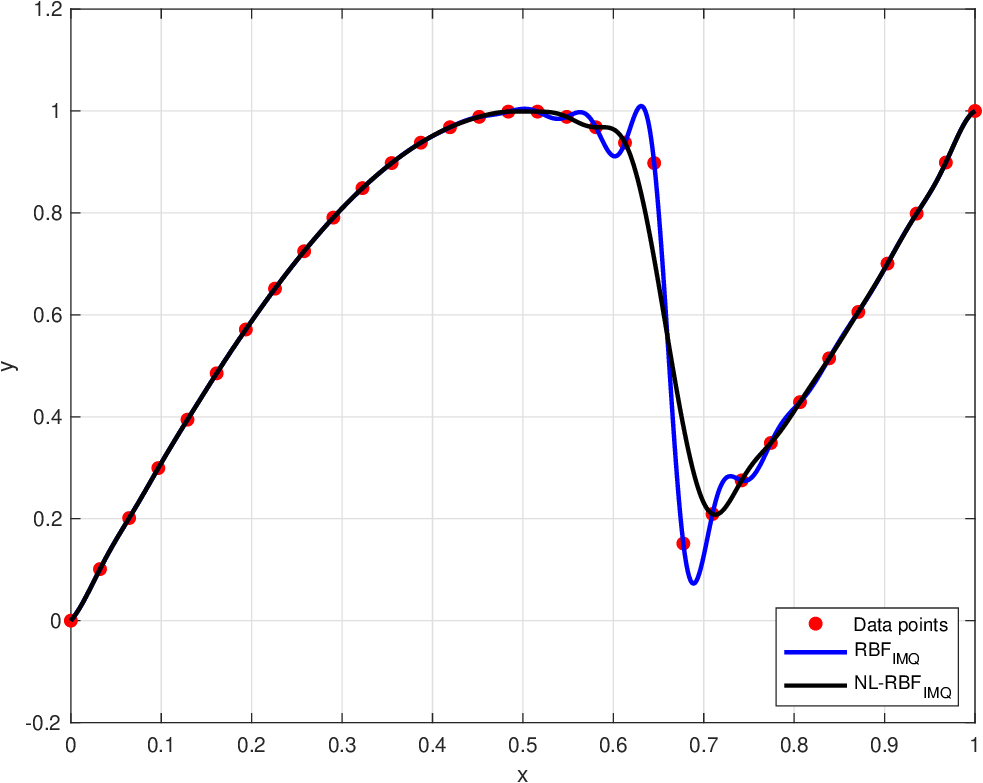}\\
	\includegraphics[width=8cm, height=5.15cm]{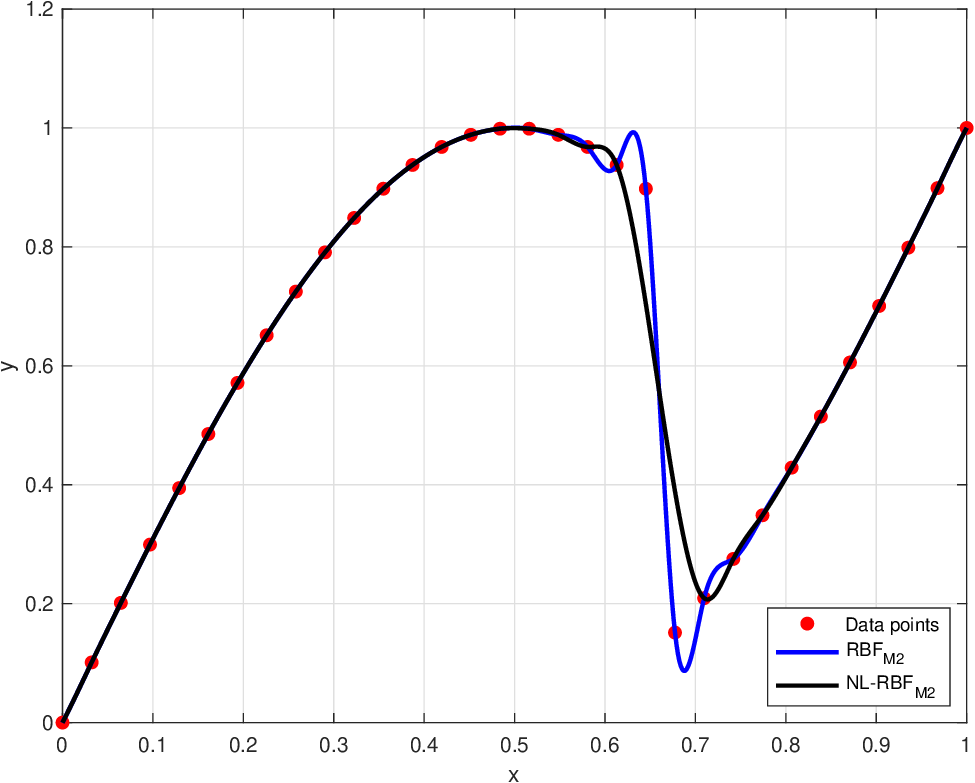} & 	\includegraphics[width=8cm, height=5.15cm]{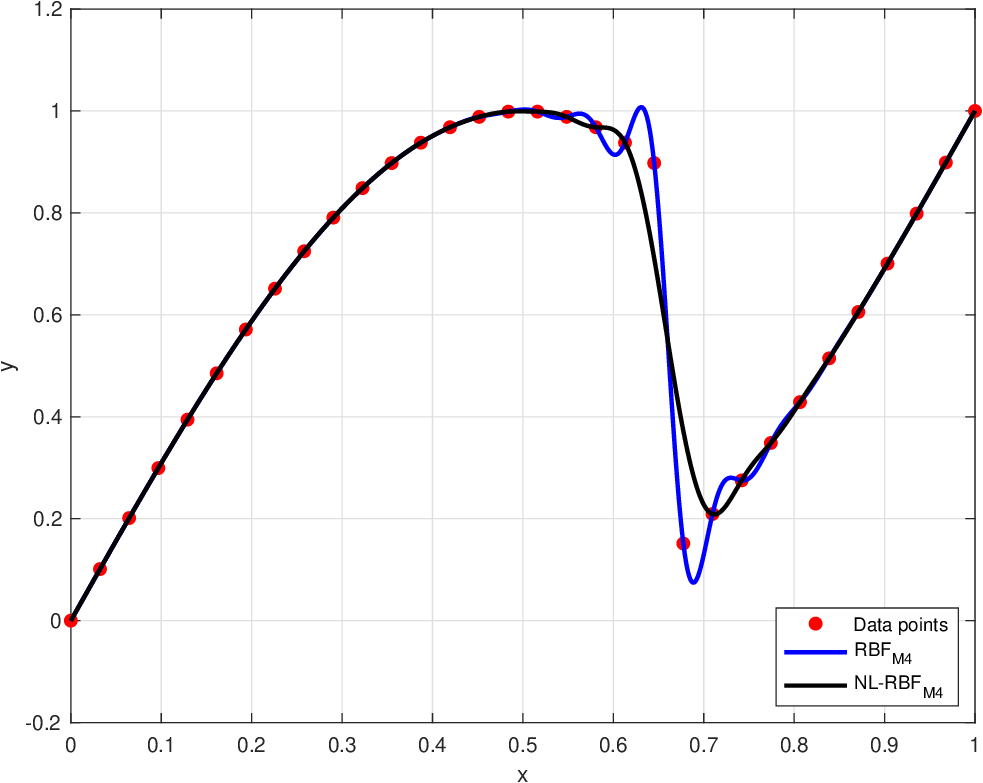}\\
	\includegraphics[width=8cm, height=5.15cm]{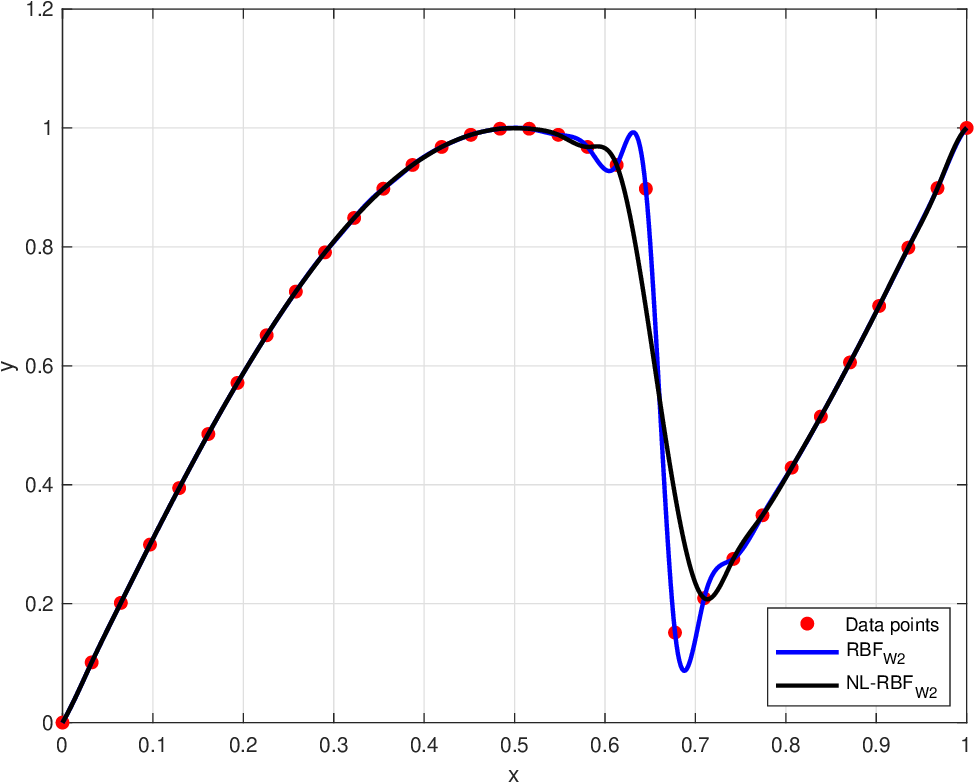} & 	\includegraphics[width=8cm, height=5.15cm]{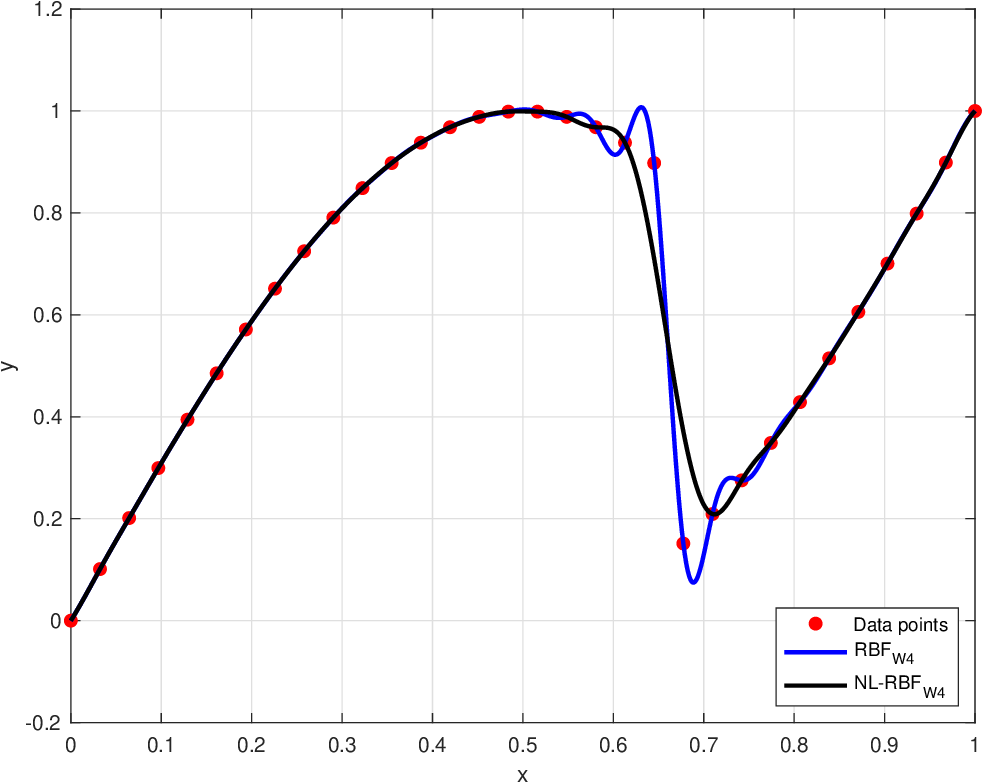}
		\end{tabular}
\end{center}
			\caption{Approximation to the function $g$ (red solid dots), Eq. \eqref{funciong}, using a uniform grid of $n=32$ points and generating ten evaluation points between each of them. In each plot the classical and data-dependent RBF algorithms have been used. From left to right and top to bottom these algorithms are: RBF$_{\text{G}}$ and RBF$_{\text{G}}$, RBF$_{\text{IMQ}}$ and DD-RBF$_{\text{IMQ}}$, RBF$_{\text{W2}}$ and  DD-RBF$_{\text{W2}}$,  RBF$_{\text{W4}}$ and  DD-RBF$_{\text{W4}}$,  RBF$_{\text{M2}}$ and DD-RBF$_{\text{M2}}$, RBF$_{\text{M4}}$ and DD-RBF$_{\text{M4}}$.}
		\label{exp1}
	\end{figure}

\begin{table}[ht]
\centering
\begin{tabular}{|l|c|c|}
\hline
\textbf{Kernel} & $\kappa$ \textbf{(Classical)} &$\kappa$ \textbf{(data-dependent)} \\\hline
RBF$_{\text{G}}$ & 7.8829e+03 & 5.8410e+03 \\
RBF$_{\text{IMQ}}$ & 1.3707e+03 & 1.2740e+03 \\
RBF$_{\text{W2}}$ & 1.2888e+03 & 1.2368e+03 \\
RBF$_{\text{W4}}$ & 7.7535e+03 & 7.2861e+03 \\
RBF$_{\text{M2}}$ & 2.7476e+05 & 2.6051e+05 \\
RBF$_{\text{M4}}$ & 2.3188e+08 & 2.1584e+08 \\
\hline
\end{tabular}
\caption{Condition numbers $\kappa$ for classical and data-dependent RBF interpolation methods across different kernels for the experiments shown in Figure \ref{exp1}, where the data is uniformly gridded.}
\label{tabla_condicion_unif}
\end{table}

	\begin{figure}[htbp!]
\begin{center}
		\begin{tabular}{cc}
	\includegraphics[width=8cm, height=5.15cm]{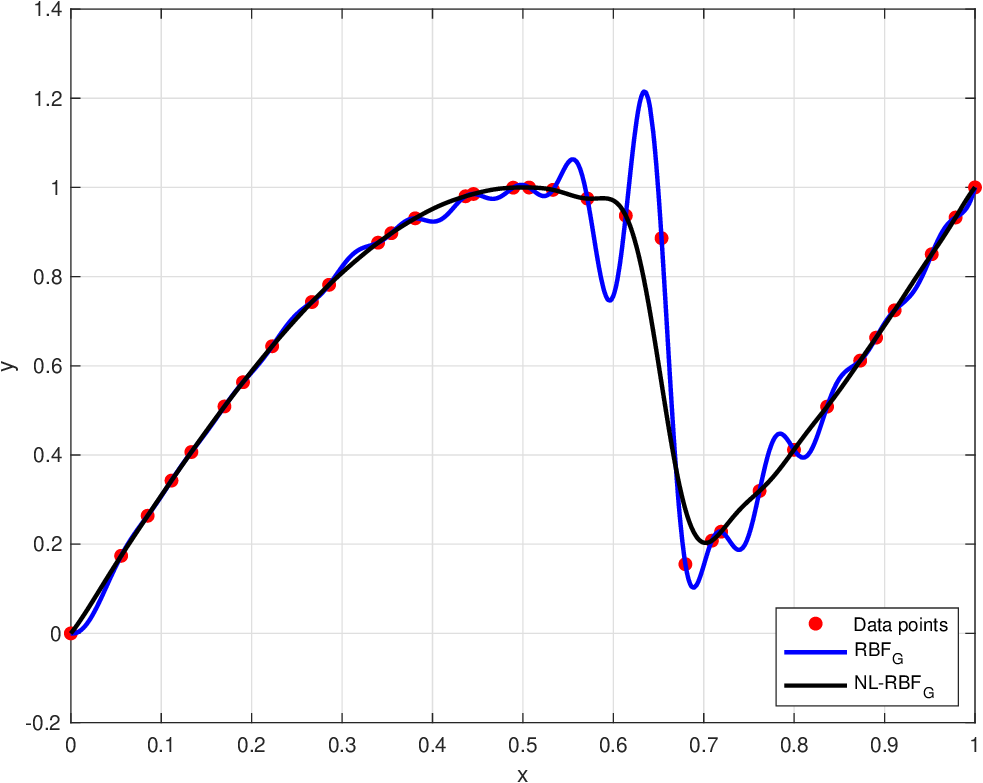} & 	\includegraphics[width=8cm, height=5.15cm]{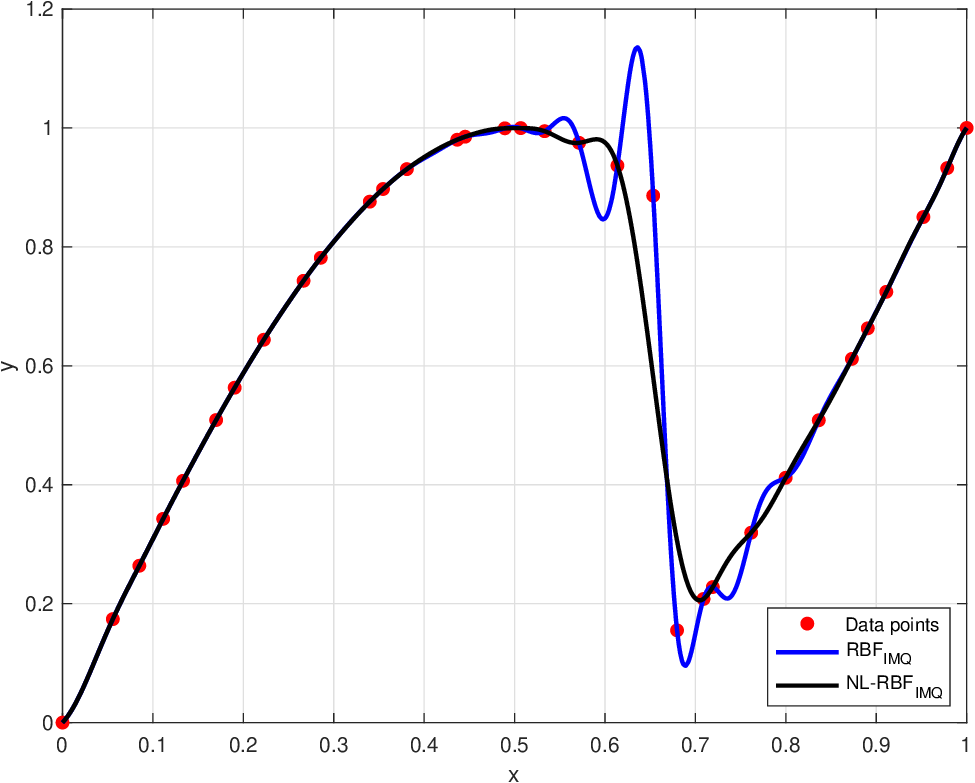}\\
	\includegraphics[width=8cm, height=5.15cm]{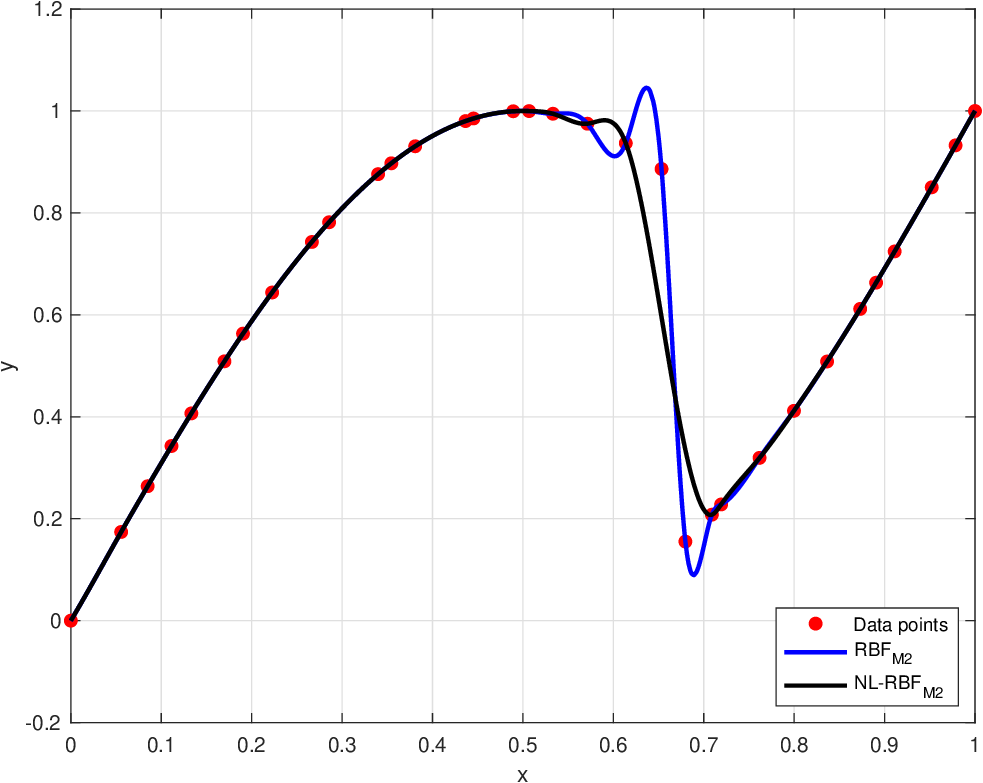} & 	\includegraphics[width=8cm, height=5.15cm]{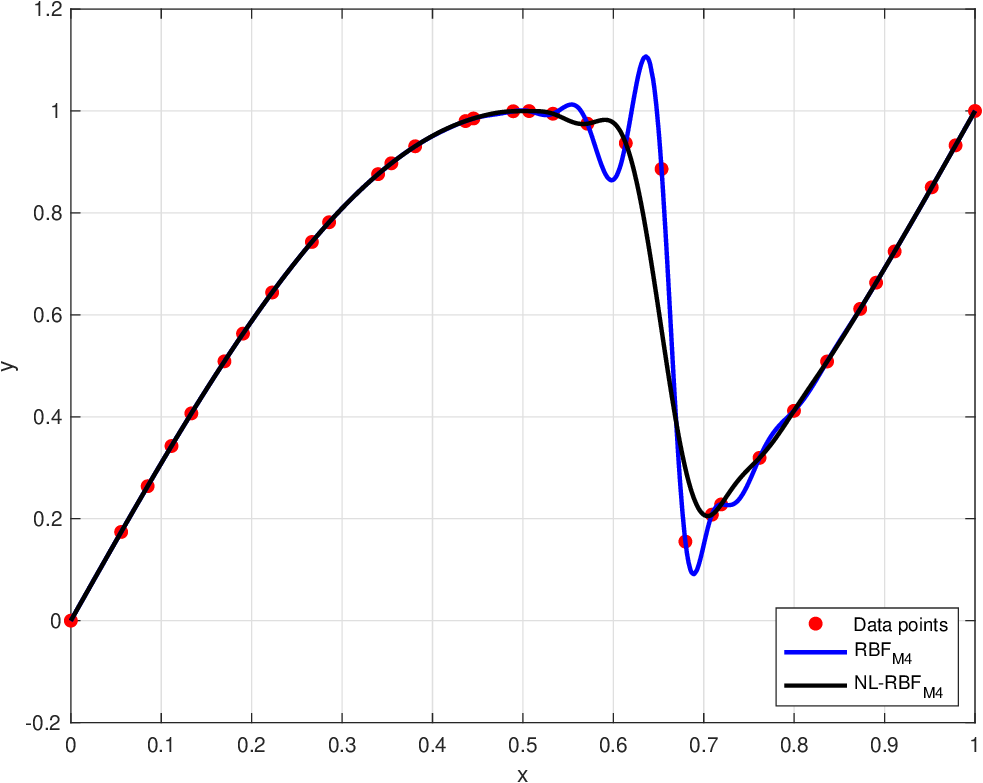}\\
	\includegraphics[width=8cm, height=5.15cm]{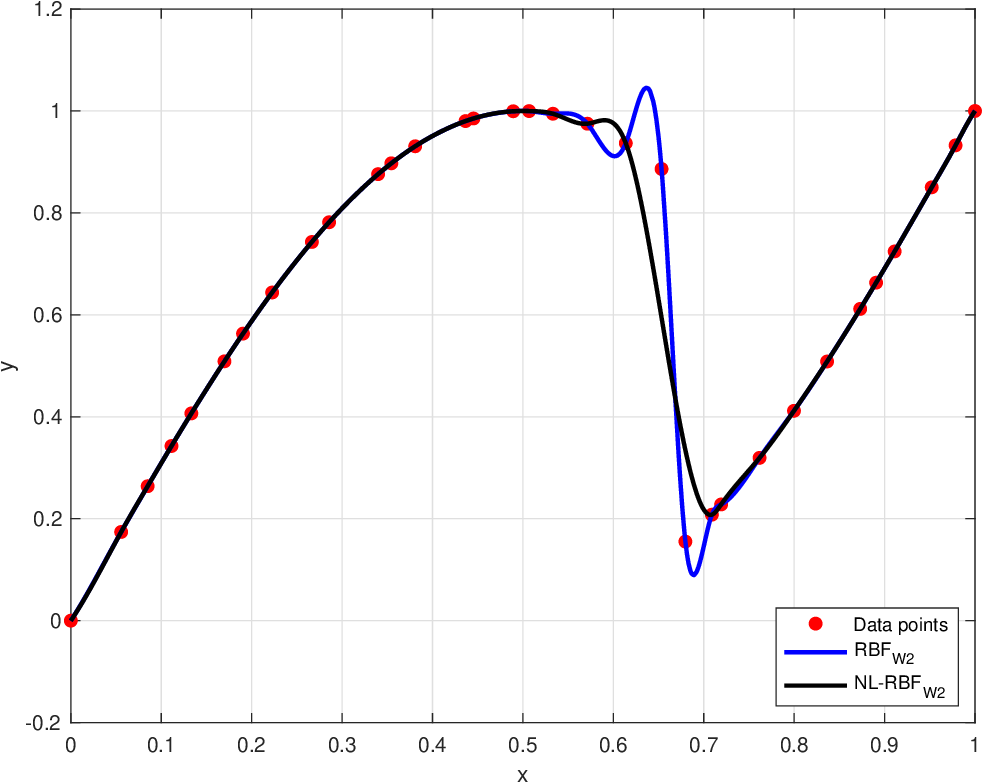} & 	\includegraphics[width=8cm, height=5.15cm]{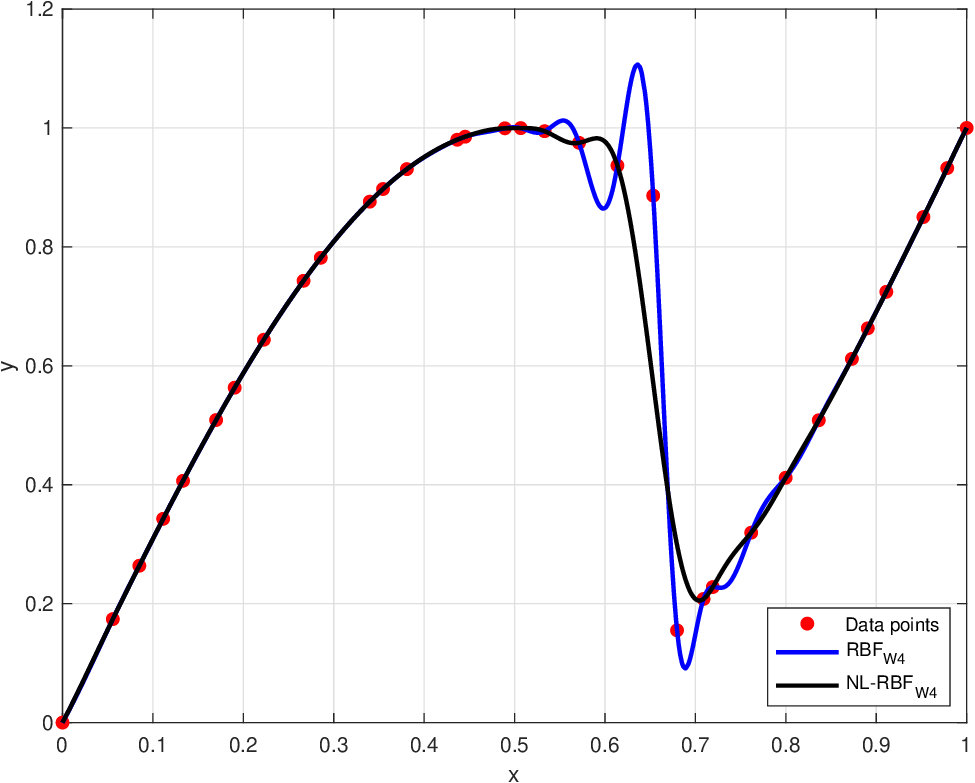}
		\end{tabular}
\end{center}
			\caption{Approximation to the function $g$ (red solid dots), Eq. \eqref{funciong}, using $n=32$ Halton points and generating ten Halton evaluation points between each of them. In each plot the classical and data-dependent RBF algorithms have been used. From left to right and top to bottom these algorithms are: RBF$_{\text{G}}$ and RBF$_{\text{G}}$, RBF$_{\text{IMQ}}$ and DD-RBF$_{\text{IMQ}}$, RBF$_{\text{W2}}$ and  DD-RBF$_{\text{W2}}$,  RBF$_{\text{W4}}$ and  DD-RBF$_{\text{W4}}$,  RBF$_{\text{M2}}$ and DD-RBF$_{\text{M2}}$, RBF$_{\text{M4}}$ and DD-RBF$_{\text{M4}}$.}
		\label{exp2}
	\end{figure}

\begin{table}[ht]
\centering
\begin{tabular}{|l|c|c|}
\hline
\textbf{Kernel} & $\kappa$ \textbf{(Classical)} & $\kappa$ \textbf{(data-dependent)} \\
\hline
RBF$_{\text{G}}$ & 2.6163e+05 & 1.3202e+05 \\
RBF$_{\text{IMQ}}$ & 1.2173e+04 & 9.4582e+03 \\
RBF$_{\text{W2}}$ & 3.9567e+04 & 3.6773e+04 \\
RBF$_{\text{W4}}$ & 8.9213e+05 & 7.3134e+05 \\
RBF$_{\text{M2}}$ & 6.1173e+06 & 5.7182e+06 \\
RBF$_{\text{M4}}$ & 1.8349e+10 & 1.5113e+10 \\
\hline
\end{tabular}
\caption{Condition numbers $\kappa$ for classical and data-dependent RBF interpolation methods across different kernels for the experiments shown in Figure \ref{exp2}, where we have used Halton points.}
\label{tabla_condicion_no_unif}
\end{table}

\subsection{Mitigation of the oscillations close to jump discontinuities in the function for bivariate data}

In this subsection, we analyze the performance of both classical and data-dependent algorithms when applied to piecewise smooth bivariate data. To do so, we use the well-known Franke's function \cite{Franke}, which is commonly employed as a benchmark in surface approximation problems:

\begin{equation}\label{frankesfunction}
\begin{split}
f(x,y) =\;& \frac{3}{4} e^{- \frac{1}{4}((9x - 2)^2 + (9y - 2)^2)} + \frac{3}{4} e^{- \frac{1}{49}(9x + 1)^2 - \frac{1}{10}(9y + 1)} \\
& + \frac{1}{2} e^{- \frac{1}{4}((9x - 7)^2 + (9y - 3)^2)} - \frac{1}{5} e^{-(9x - 4)^2 - (9y - 7)^2}.
\end{split}
\end{equation}
To introduce a jump discontinuity, we modify the function as follows:
\begin{equation}\label{frankesdisc}
f_1(x,y) = \left\{
\begin{array}{ll}
2 + f(x,y), & x^2 + y^2 - 0.3^2 \geq 0, \\
-1 + f(x,y), & x^2 + y^2 - 0.3^2 < 0.
\end{array}
\right.
\end{equation}
As in the previous univariate examples, we consider both gridded data and Halton sequences for sampling. In both cases, the data points are distributed within the square $[0,1]^2$, using an initial set of $50 \times 50$ points.

For the experiments using gridded data, illustrated in Figures \ref{exp1_2D} to \ref{exp6_2D}, we set the shape parameter $\varepsilon$ according to the type of radial basis function $\mathcal{H}$ as follows:
\[
\varepsilon = 
\begin{cases}
\frac{0.5}{h}, & \text{if } \mathcal{H} = \text{G or IMQ}, \\
\frac{0.1}{h}, & \text{if } \mathcal{H} = \text{W2, W4, M2 or M4}.
\end{cases}
\]
Similarly, for the experiments using non-uniform data sampled via Halton points (Figures~\ref{exp7_2D} to~\ref{exp12_2D}), we use the following shape parameter settings:
\[
\varepsilon = 
\begin{cases}
\frac{0.7}{h}, & \text{if } \mathcal{H} = \text{G or IMQ}, \\
\frac{0.1}{h}, & \text{if } \mathcal{H} = \text{W2, W4, M2 or M4}.
\end{cases}
\]

In all the experiments presented in this section (Figures~\ref{exp1_2D} to~\ref{exp12_2D}), we approximate the piecewise smooth function $f_1$ defined in Eq.~\eqref{frankesdisc}, using $n = 50 \times 50$ initial data points, either arranged on a regular grid (Figures \ref{exp1_2D} to \ref{exp6_2D}) or generated via Halton sequences (Figures \ref{exp7_2D} to \ref{exp12_2D}). To perform the approximation, we interpolate at six intermediate sites between each pair of initial data points in both directions, resulting in a total of $344 \times 344$ final evaluation points, which include the original ones. Each figure consists of two columns: the left column shows the results obtained using the classical $\textsc{RBF}_{\mathcal{H}}$ algorithm, while the right column corresponds to the data-dependent $\textsc{DD-RBF}_{\mathcal{H}}$ algorithm, following the notation introduced in Table~\ref{tabla1nucleos}. The first row in each figure displays the final approximation, and the second row shows the error distribution across the domain. For the experiments using gridded data (Figures~\ref{exp1_2D} to~\ref{exp6_2D}), we observe that the classical algorithm consistently introduces severe oscillations along the discontinuity curve. These oscillations are clearly visible in the error plots, where they propagate far from the discontinuity. In contrast, the data-dependent algorithm significantly reduces these oscillations, although some smearing near the discontinuity remains. As shown in Table~\ref{tabla_condicion_unif_2D}, the condition numbers for both algorithms are comparable, with slightly lower values for the data-dependent method.

Similar conclusions can be drawn from the experiments using Halton points (Figures~\ref{exp7_2D} to~\ref{exp12_2D}). In this case, the oscillations produced by the classical algorithm are even more pronounced, while the data-dependent algorithm effectively mitigates them. The condition numbers for these experiments, reported in Table~\ref{tabla_condicion_nounif_2D}, again show similar values for both algorithms, with a slight advantage for the data-dependent approach.


	\begin{figure}[htbp!]
\begin{center}
		\begin{tabular}{cc}
	\includegraphics[width=6cm]{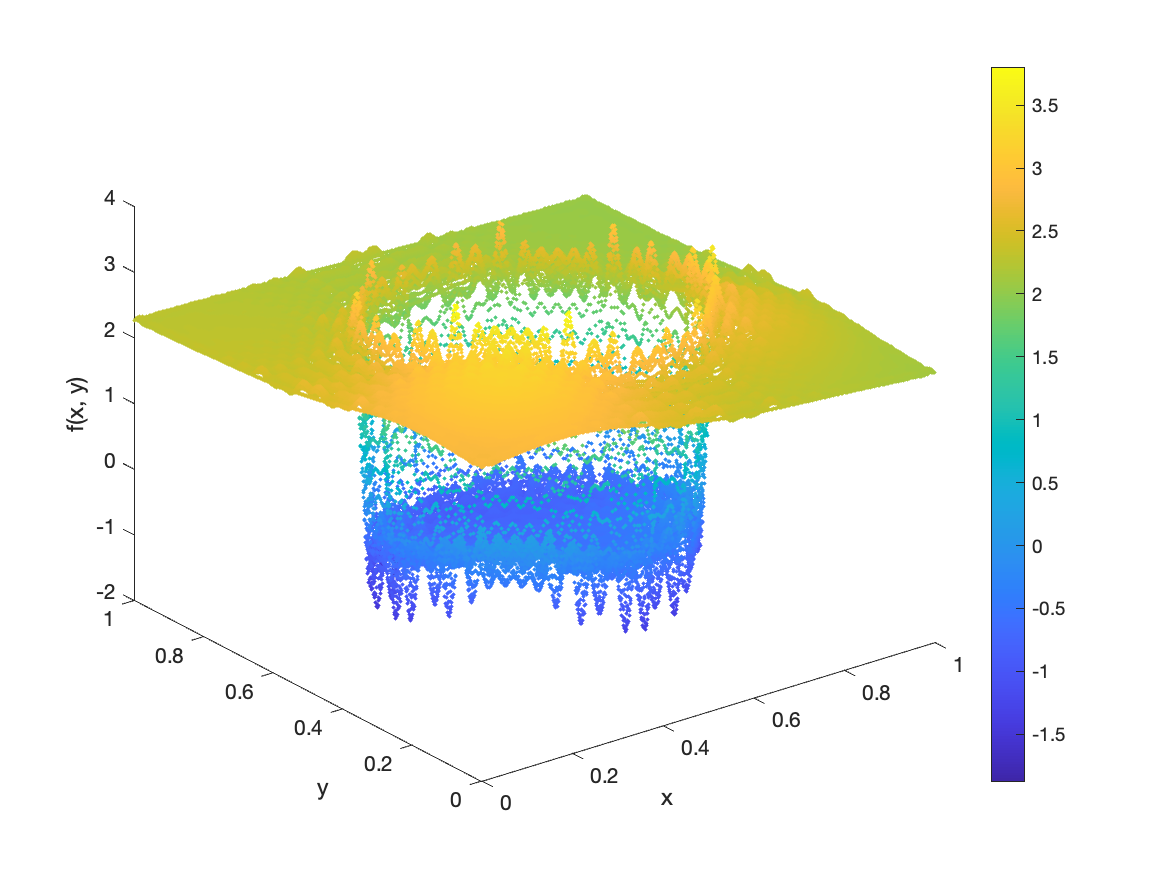} & 	\includegraphics[width=6cm]{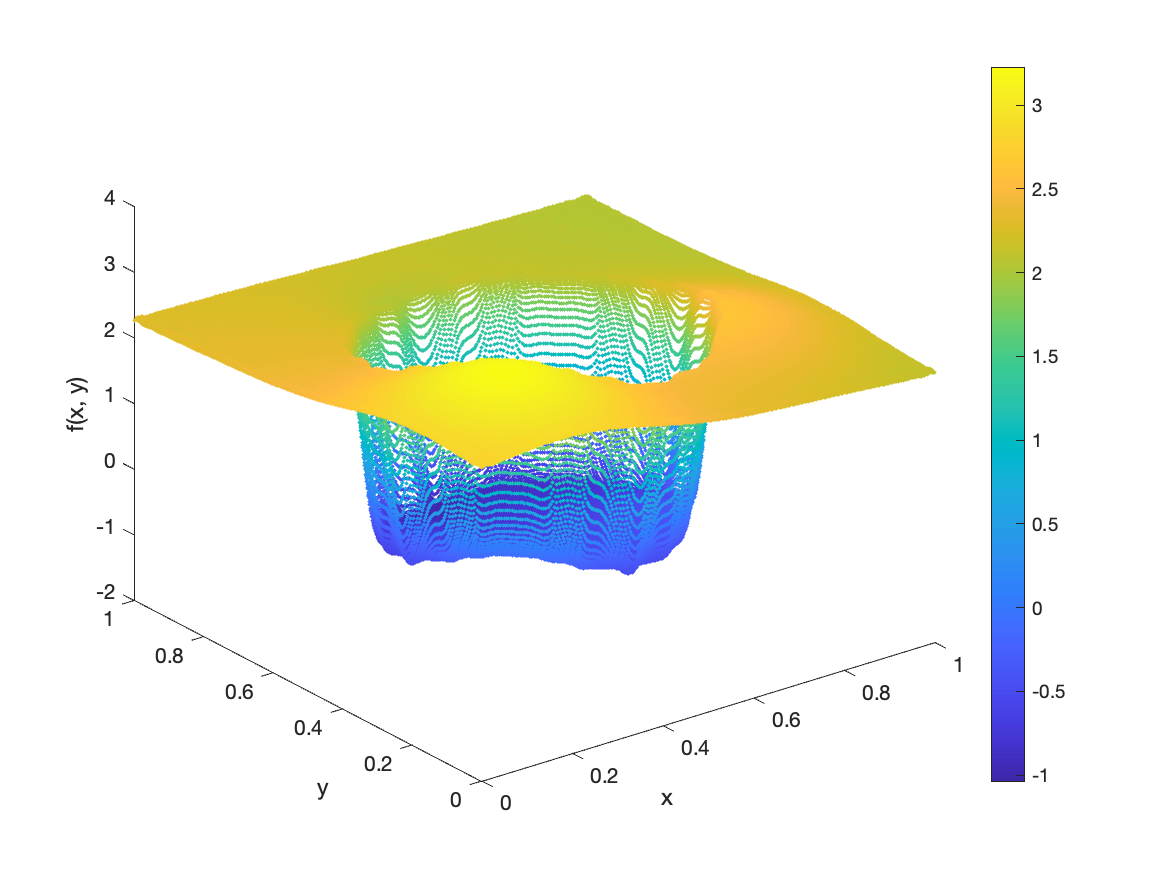}\\
	\includegraphics[width=6cm]{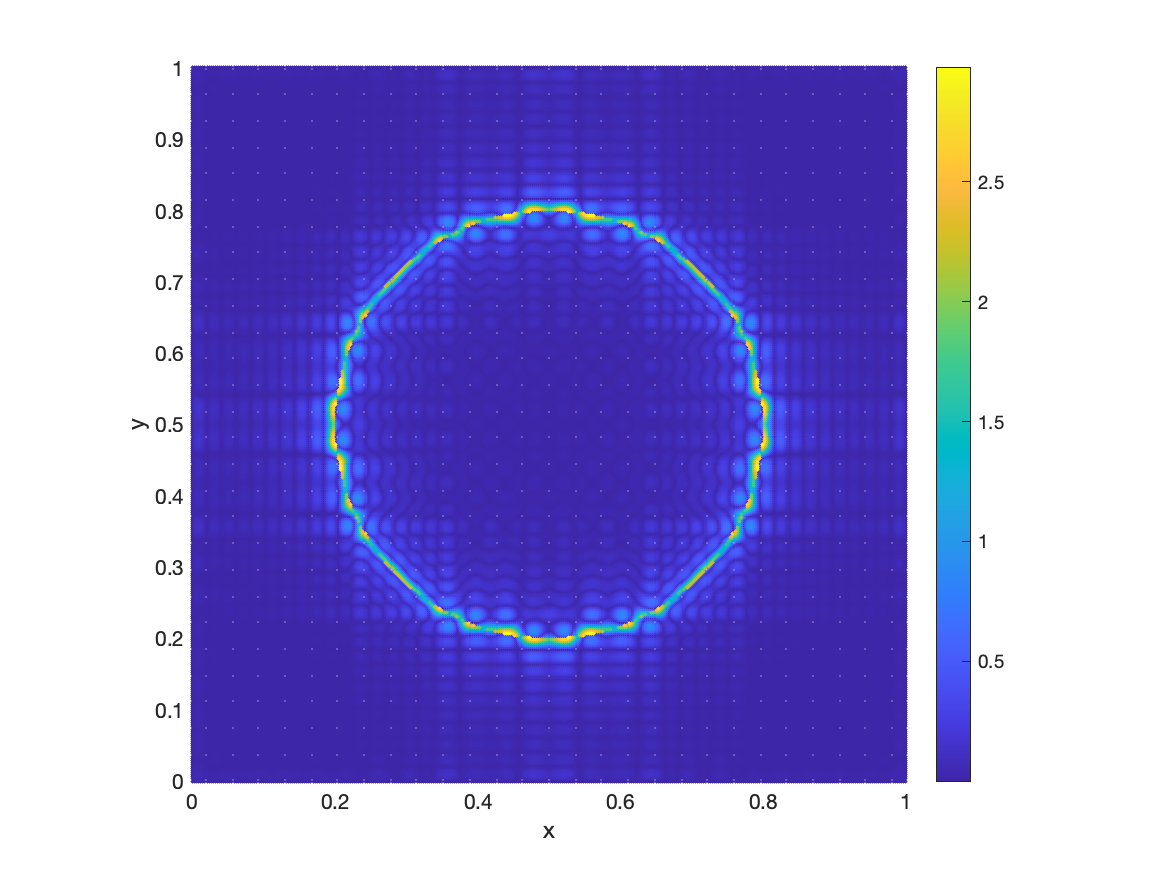} & 	\includegraphics[width=6cm]{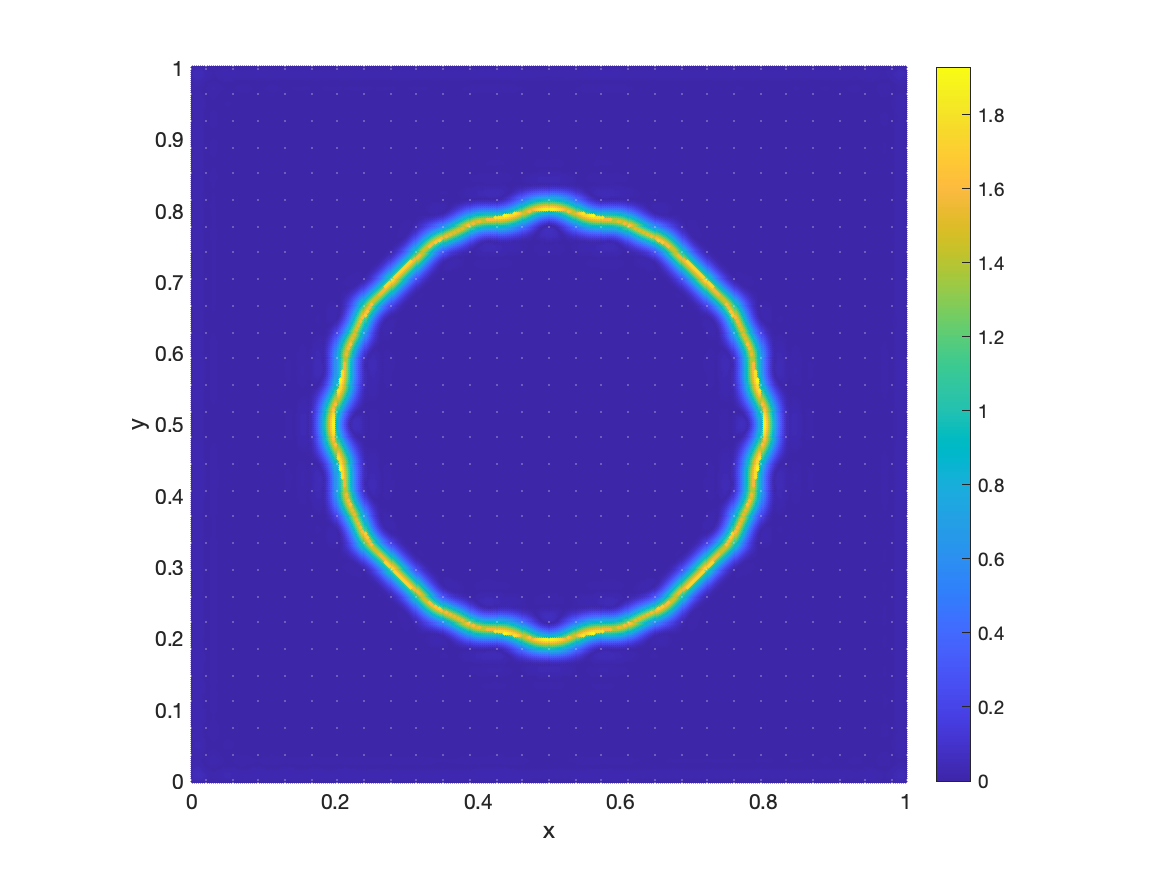}
		\end{tabular}
\end{center}
			\caption{Approximation of the function $f_1$, defined in Eq.~\eqref{frankesdisc}, using $n = 50 \times 50$ initial gridded data points. The left column shows the results obtained with the classical RBF$_{\text{G}}$ algorithm, while the right column corresponds to the data-dependent DD-RBF$_{\text{G}}$ algorithm. The first row displays the final approximations, and the second row presents the error distributions across the domain.}
		\label{exp1_2D}
	\end{figure}

	\begin{figure}[htbp!]
\begin{center}
		\begin{tabular}{cc}
	\includegraphics[width=6cm]{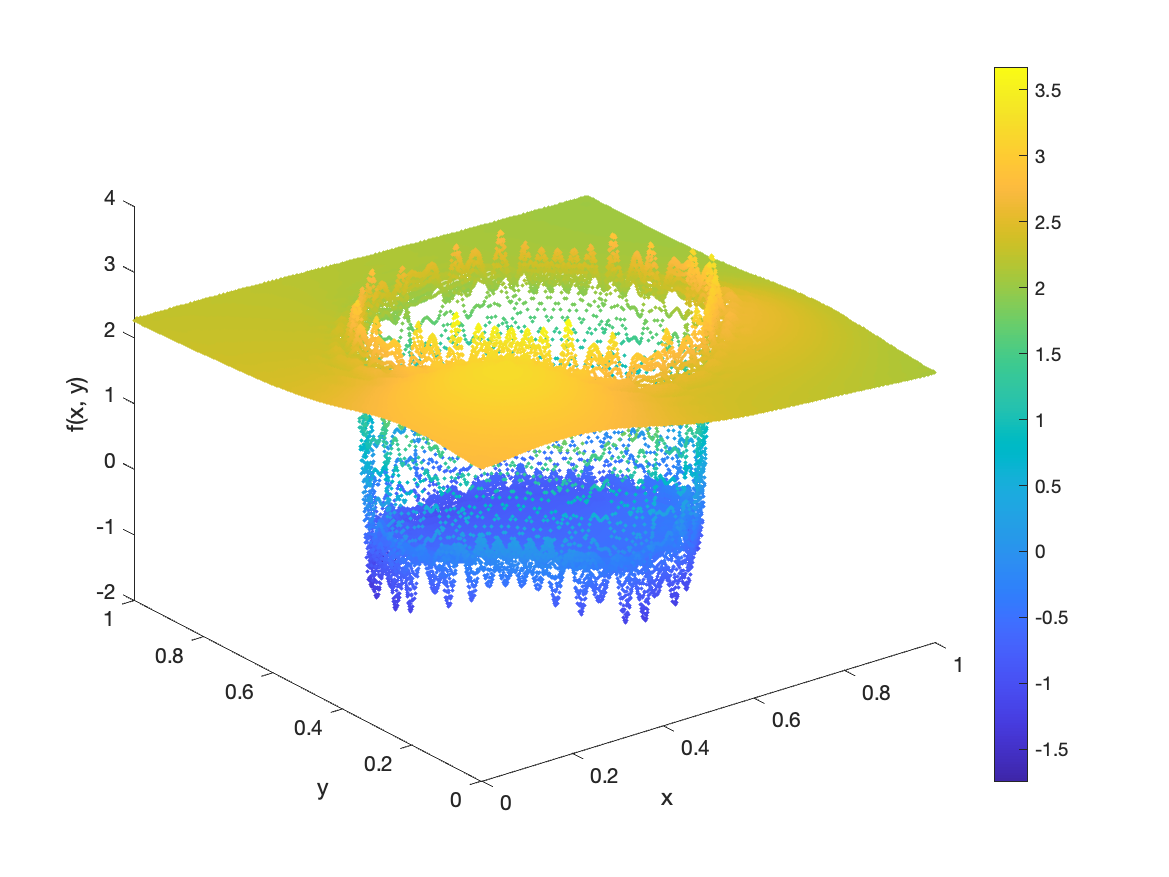} & 	\includegraphics[width=6cm]{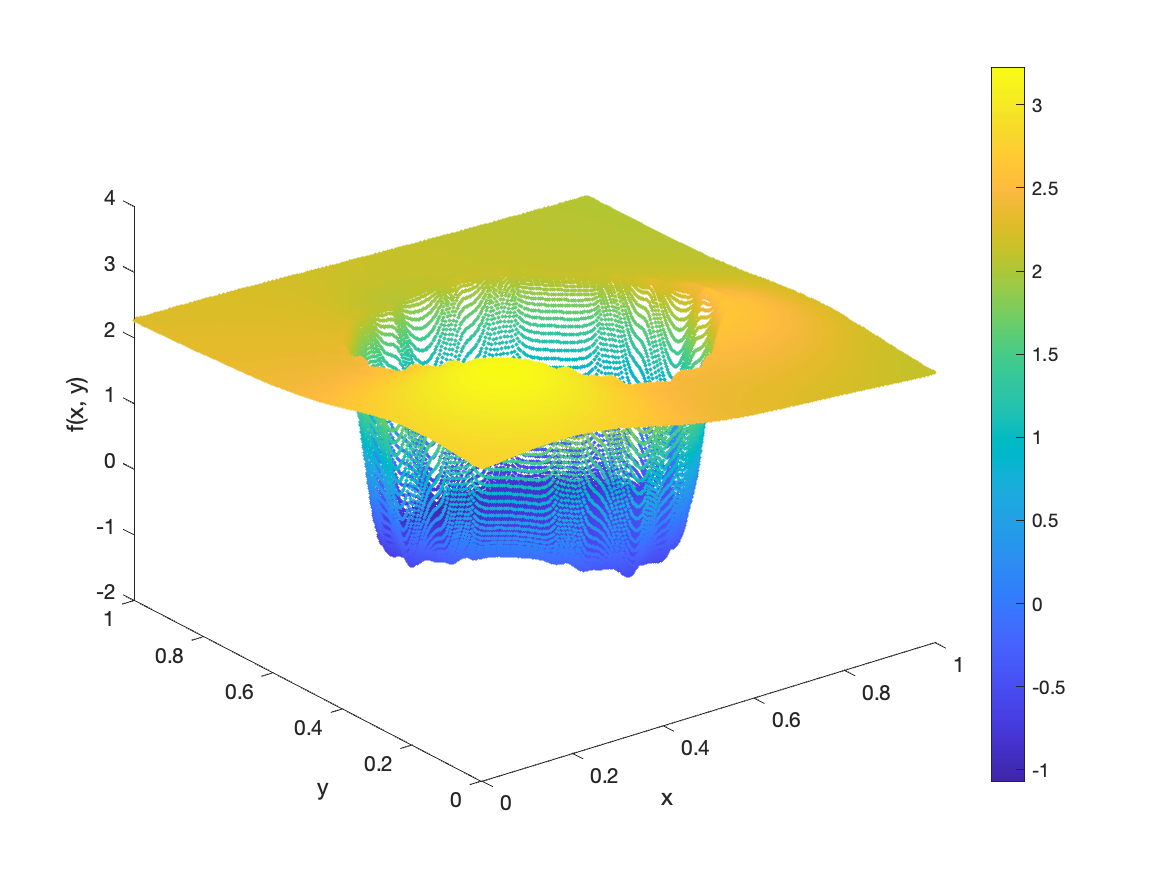}\\
	\includegraphics[width=6cm]{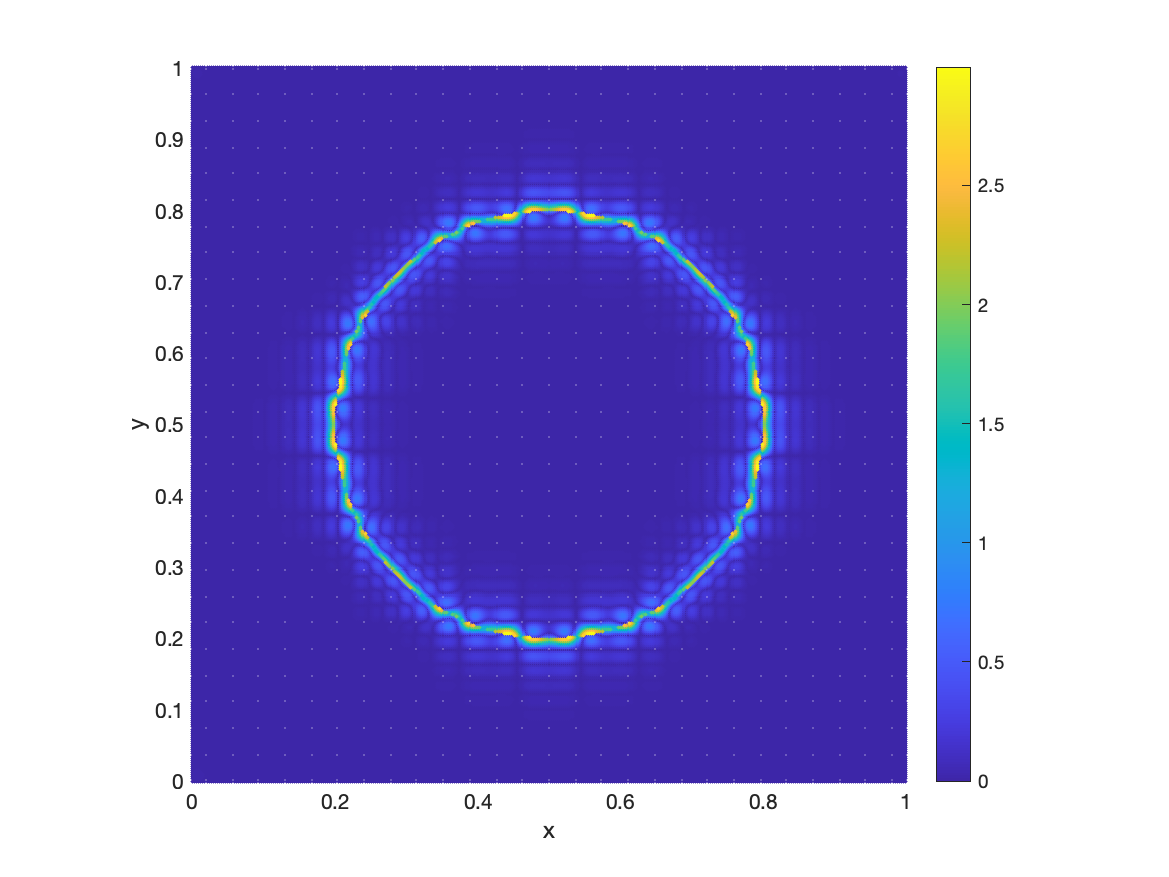} & 	\includegraphics[width=6cm]{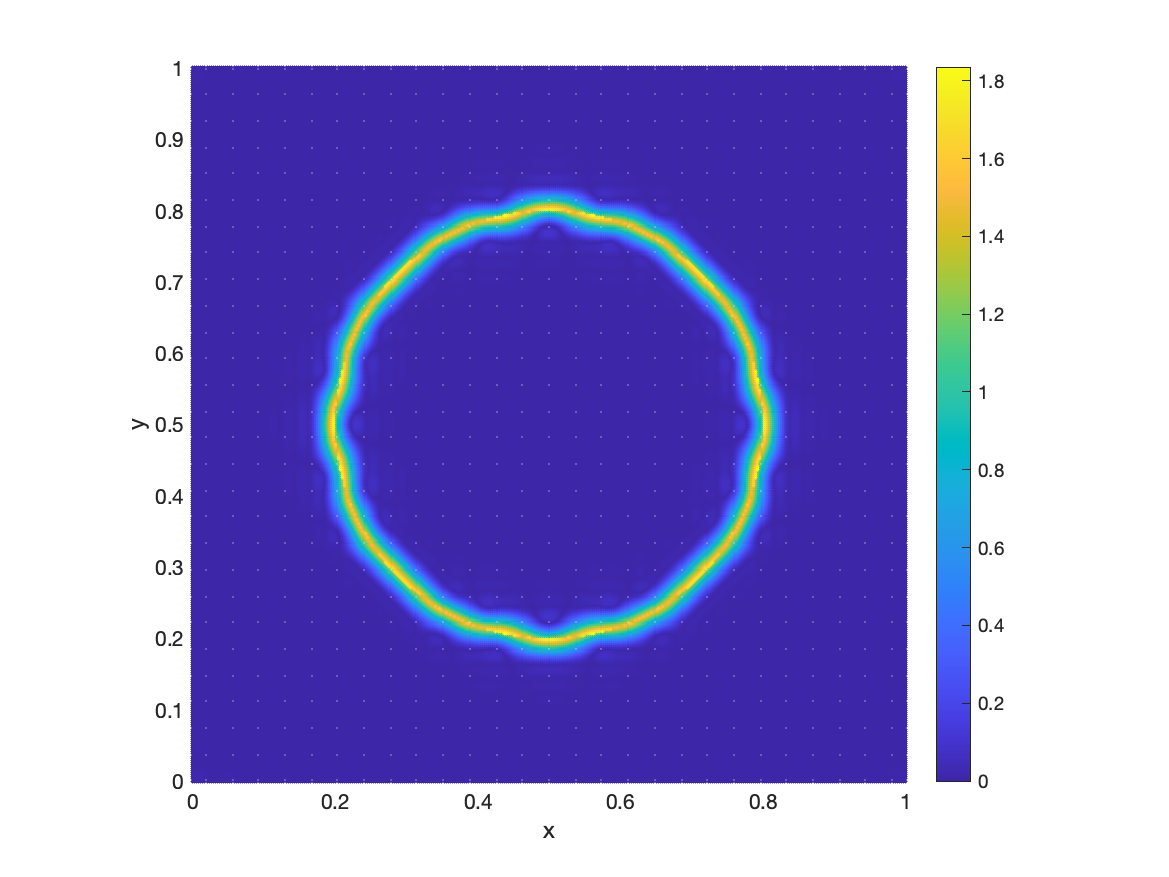}
		\end{tabular}
\end{center}
			\caption{Approximation of the function $f_1$, defined in Eq.~\eqref{frankesdisc}, using $n = 50 \times 50$ initial gridded data points. The left column shows the results obtained with the classical RBF$_{\text{IMQ}}$ algorithm, while the right column corresponds to the data-dependent DD-RBF$_{\text{IMQ}}$ algorithm. The first row displays the final approximations, and the second row presents the error distributions across the domain.}
		\label{exp2_2D}
	\end{figure}

	\begin{figure}[htbp!]
\begin{center}
		\begin{tabular}{cc}
	\includegraphics[width=6cm]{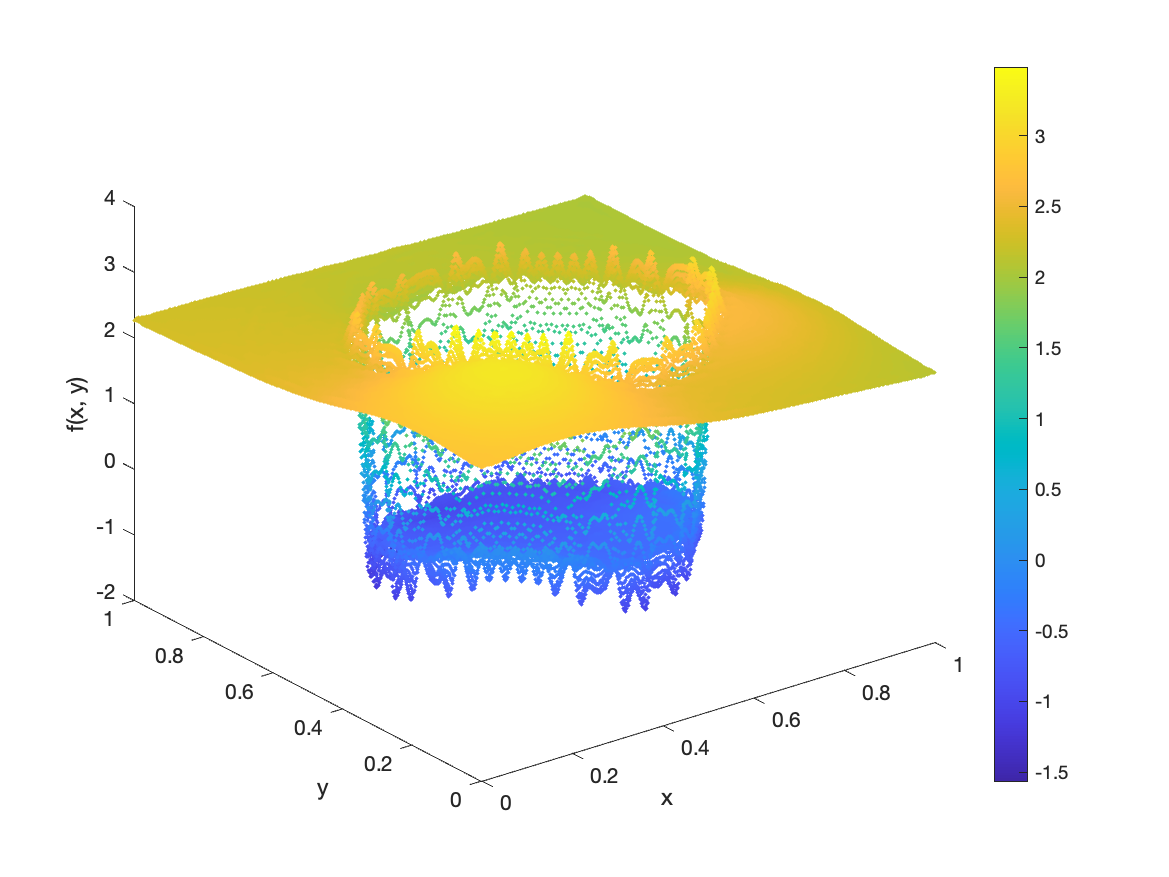} & 	\includegraphics[width=6cm]{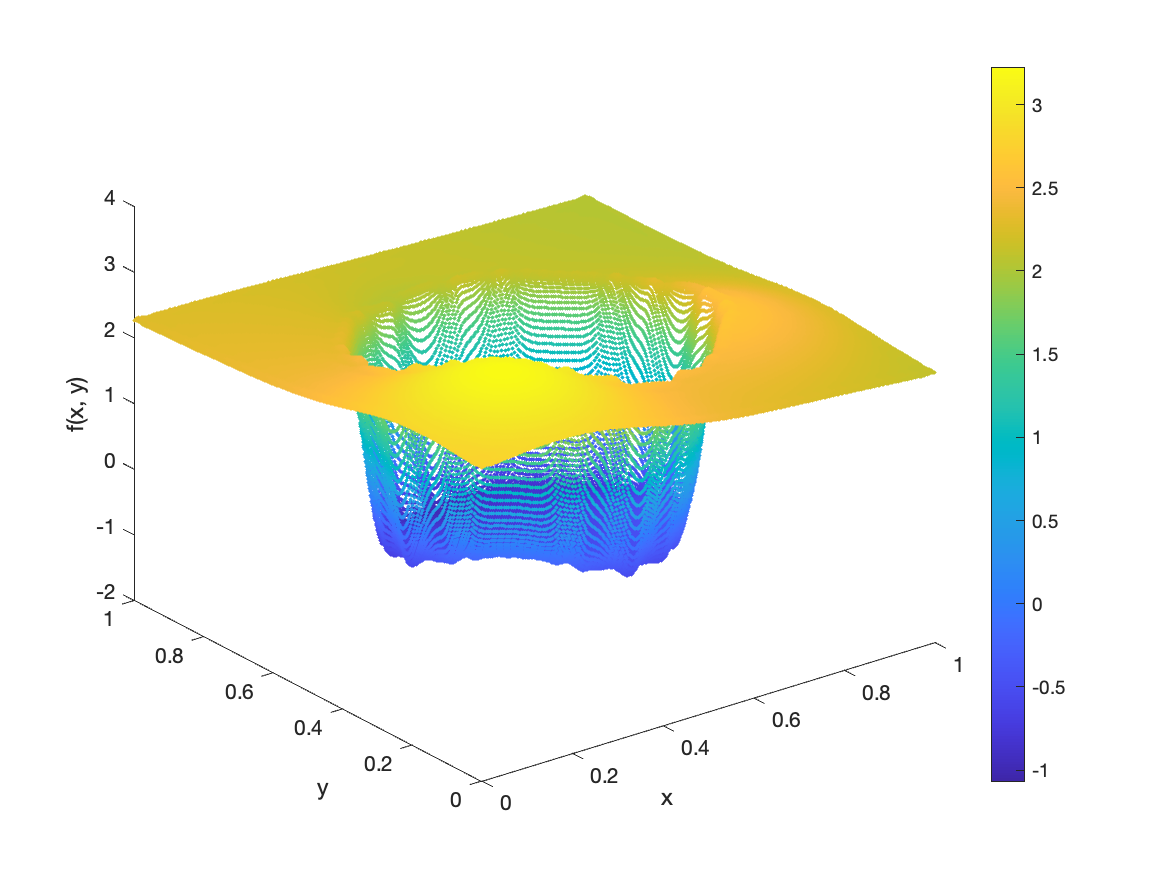}\\
	\includegraphics[width=6cm]{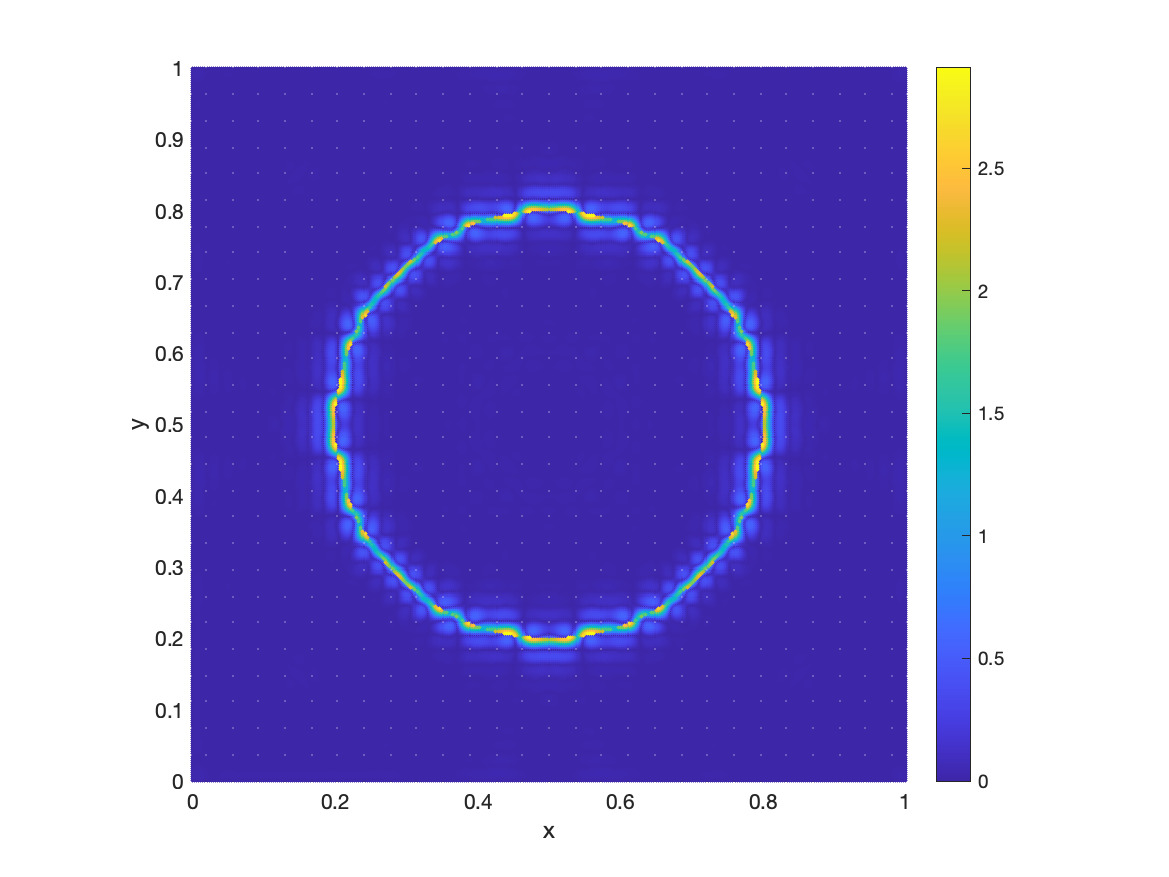} & 	\includegraphics[width=6cm]{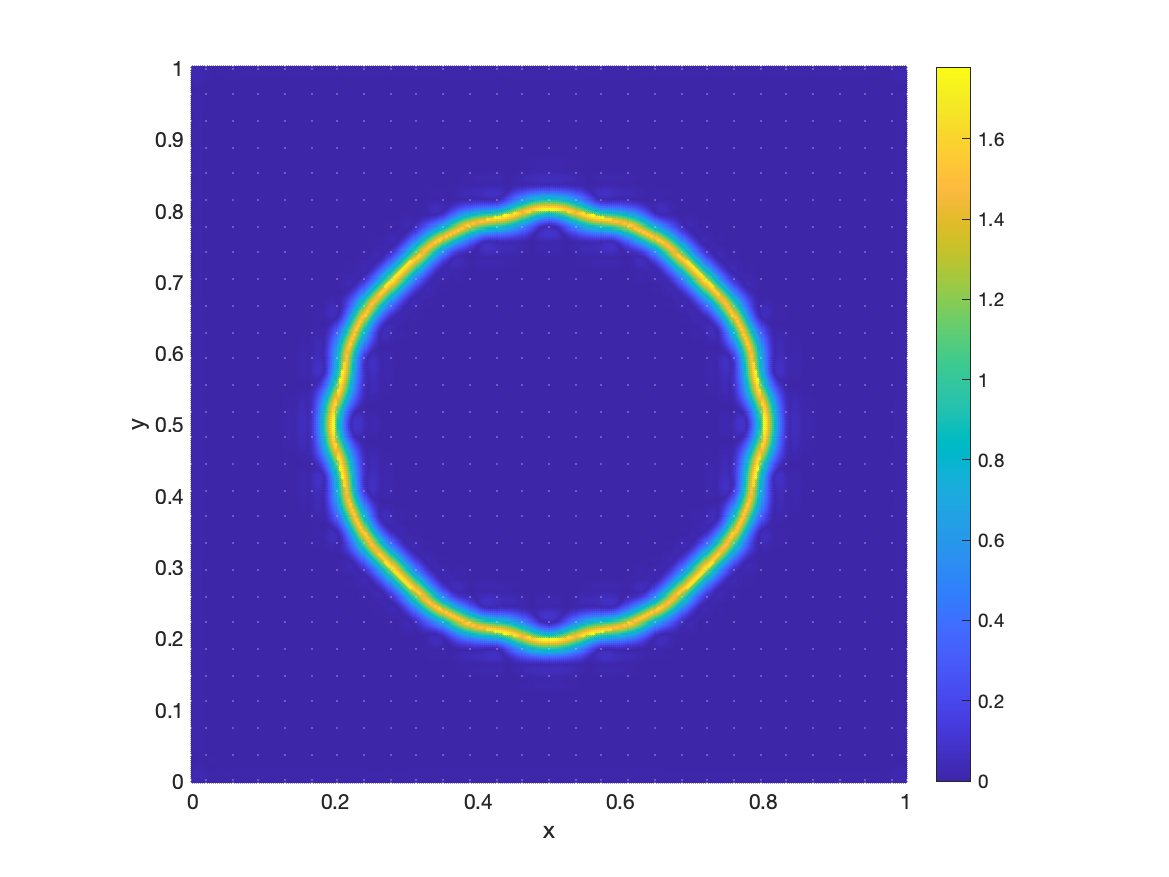}
		\end{tabular}
\end{center}
			\caption{Approximation of the function $f_1$, defined in Eq.~\eqref{frankesdisc}, using $n = 50 \times 50$ initial gridded data points. The left column shows the results obtained with the classical RBF$_{\text{W2}}$ algorithm, while the right column corresponds to the data-dependent DD-RBF$_{\text{W2}}$ algorithm. The first row displays the final approximations, and the second row presents the error distributions across the domain.}
		\label{exp3_2D}
	\end{figure}

	\begin{figure}[htbp!]
\begin{center}
		\begin{tabular}{cc}
	\includegraphics[width=6cm]{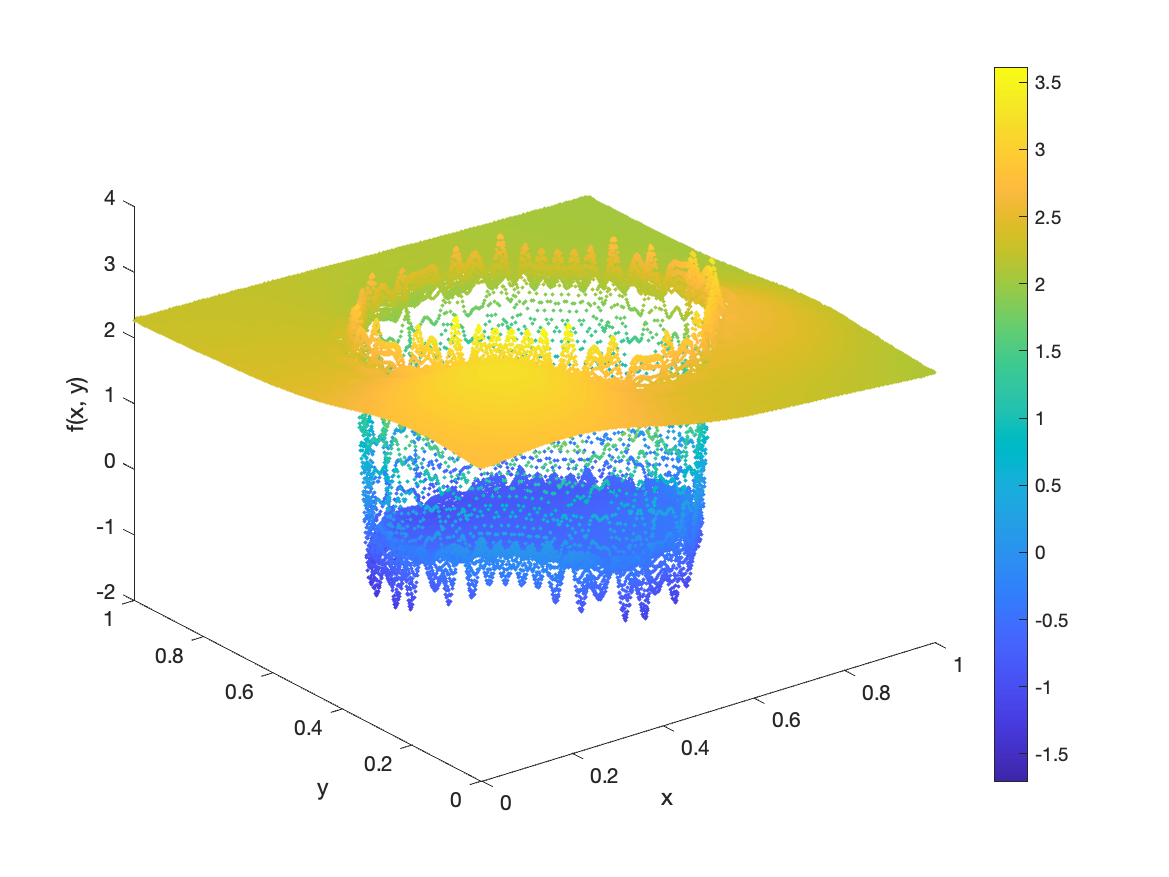} & 	\includegraphics[width=6cm]{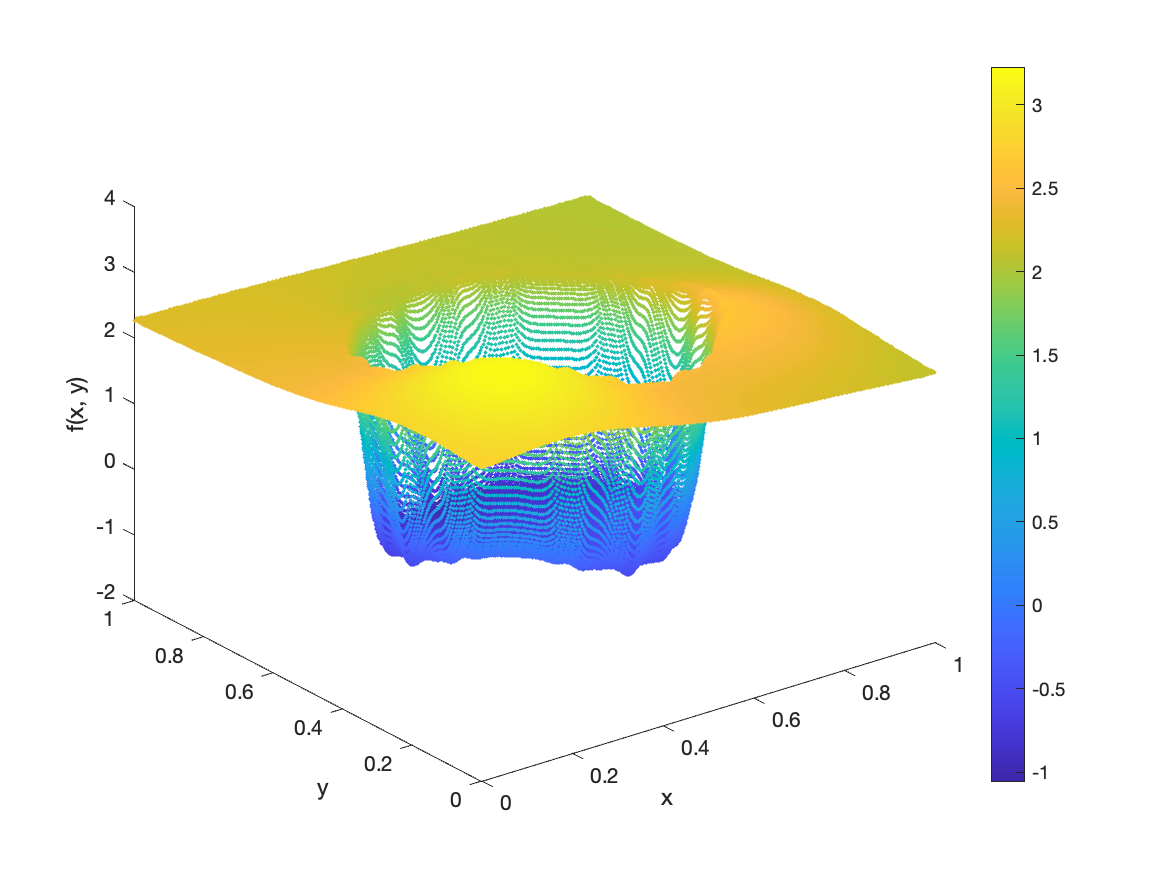}\\
	\includegraphics[width=6cm]{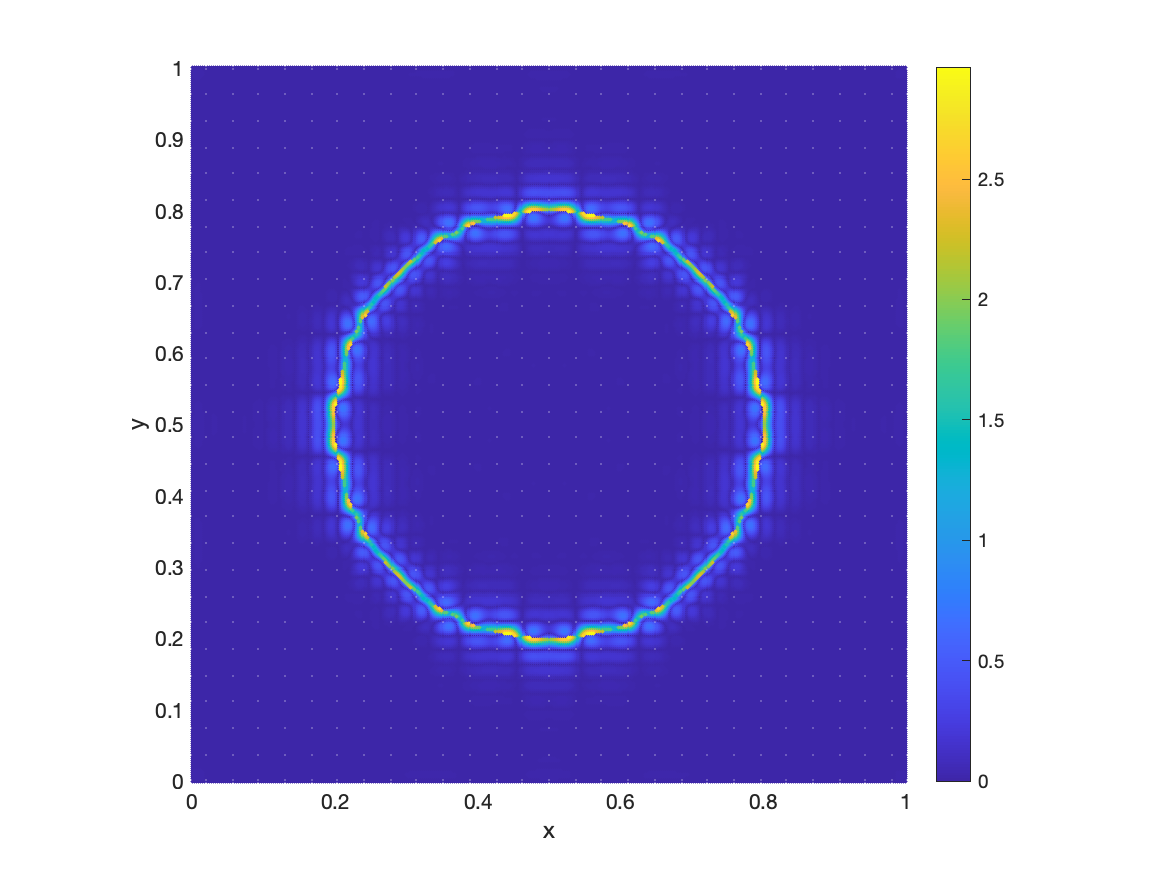} & 	\includegraphics[width=6cm]{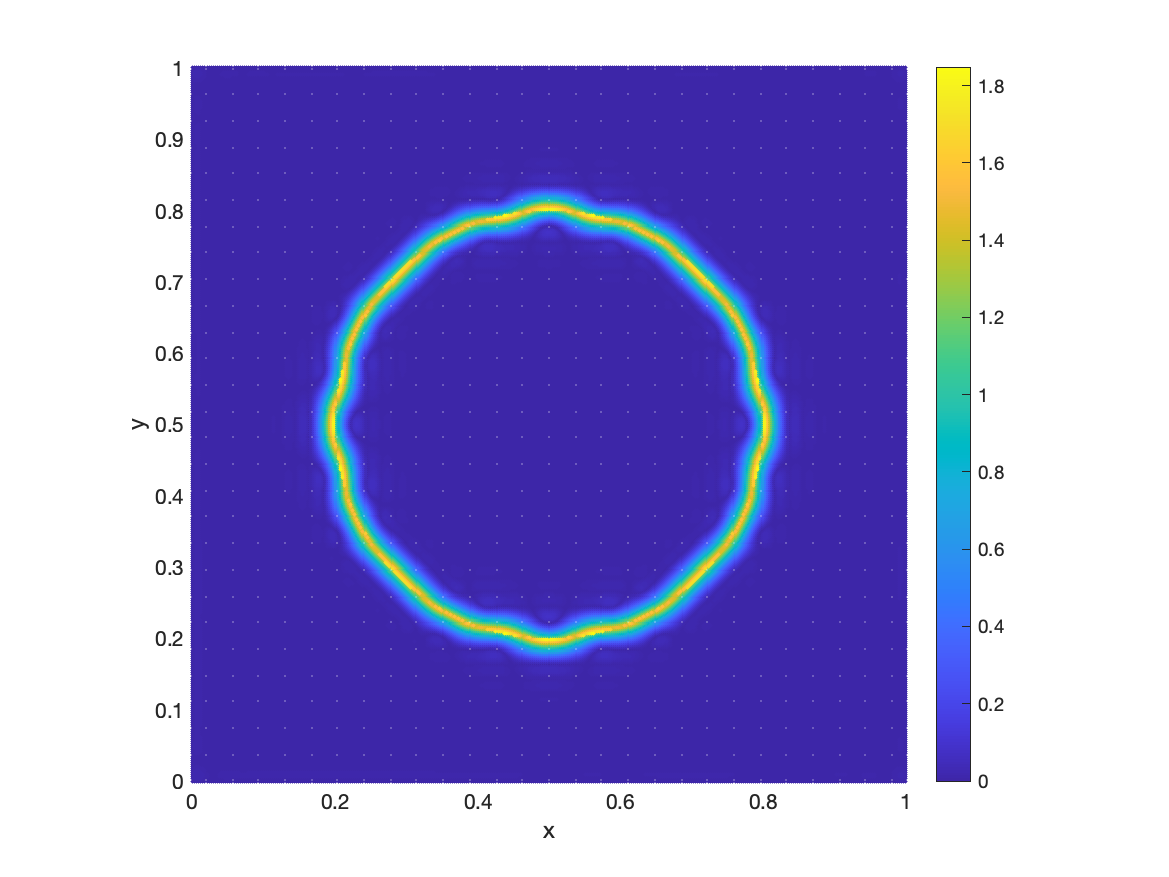}
		\end{tabular}
\end{center}
			\caption{Approximation of the function $f_1$, defined in Eq.~\eqref{frankesdisc}, using $n = 50 \times 50$ initial gridded data points. The left column shows the results obtained with the classical RBF$_{\text{W4}}$ algorithm, while the right column corresponds to the data-dependent DD-RBF$_{\text{W4}}$ algorithm. The first row displays the final approximations, and the second row presents the error distributions across the domain.}
		\label{exp4_2D}
	\end{figure}

	\begin{figure}[htbp!]
\begin{center}
		\begin{tabular}{cc}
	\includegraphics[width=6cm]{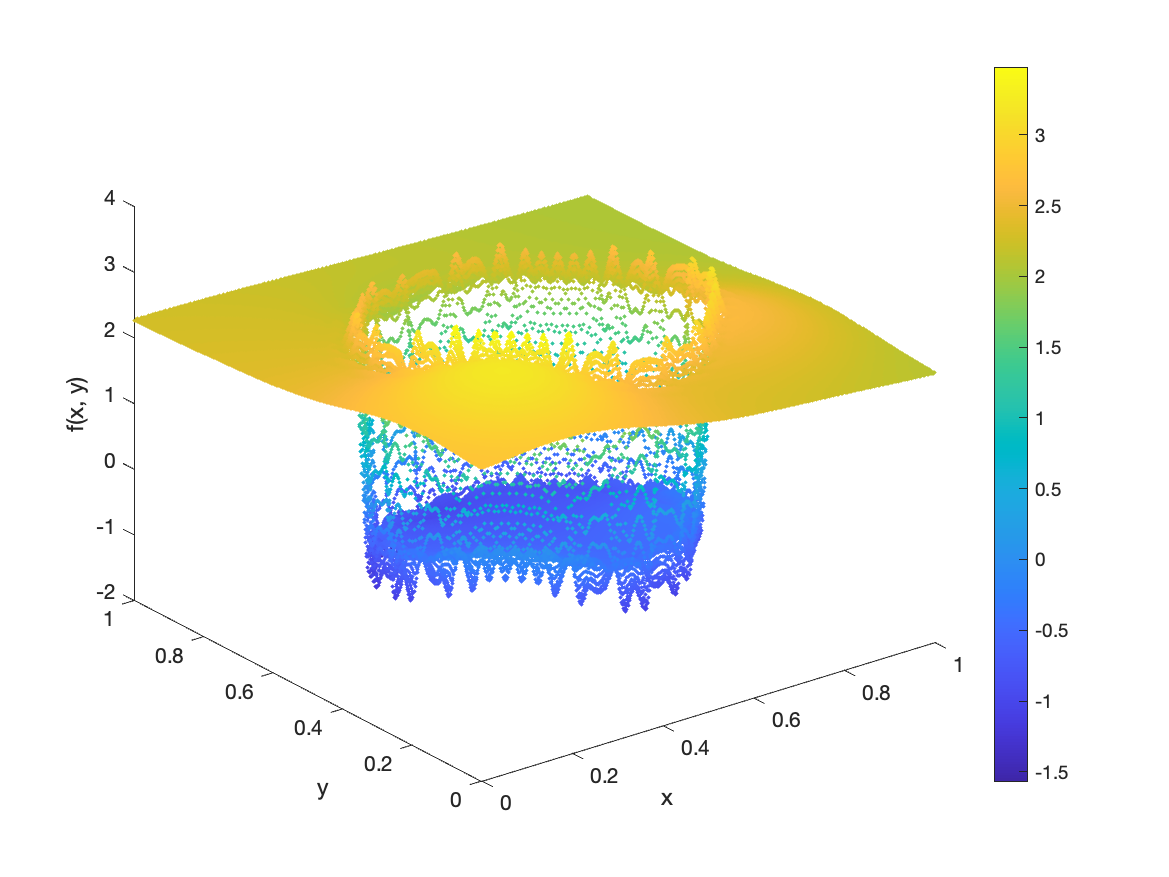} & 	\includegraphics[width=6cm]{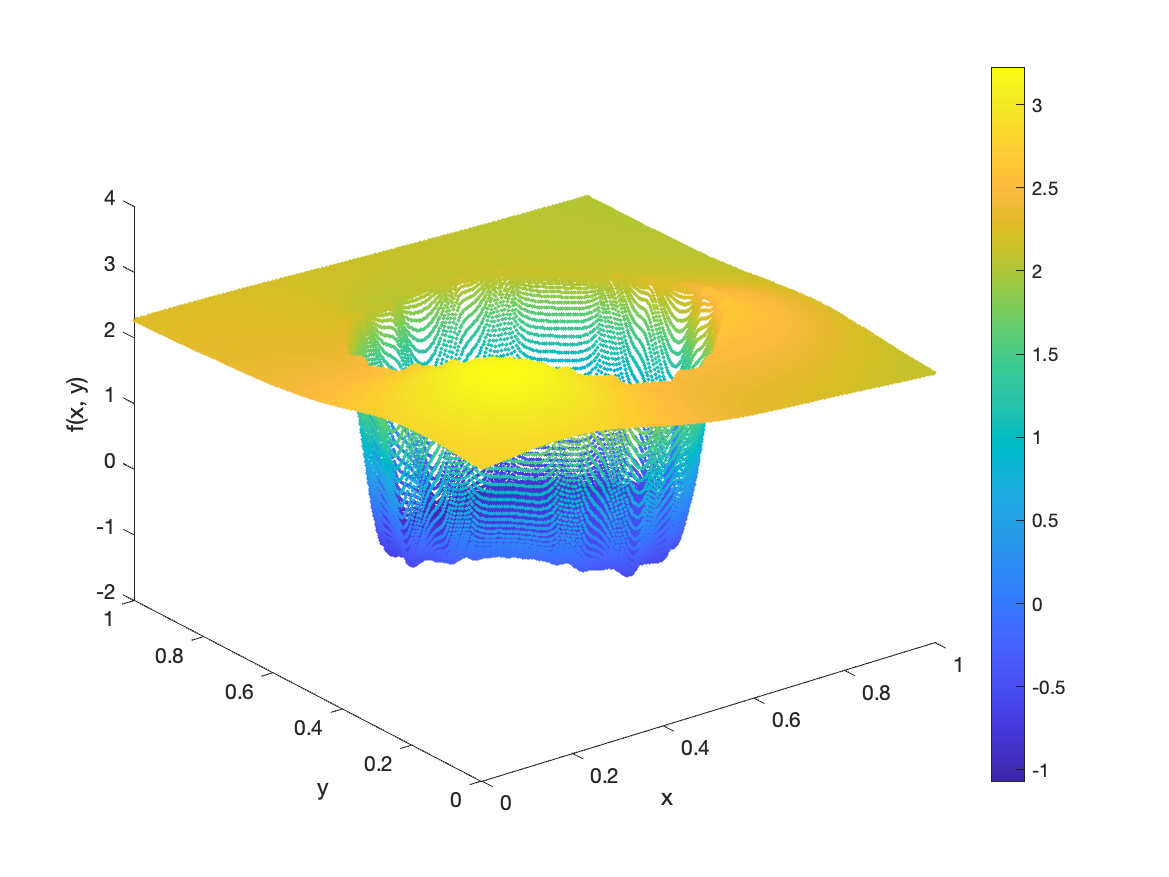}\\
	\includegraphics[width=6cm]{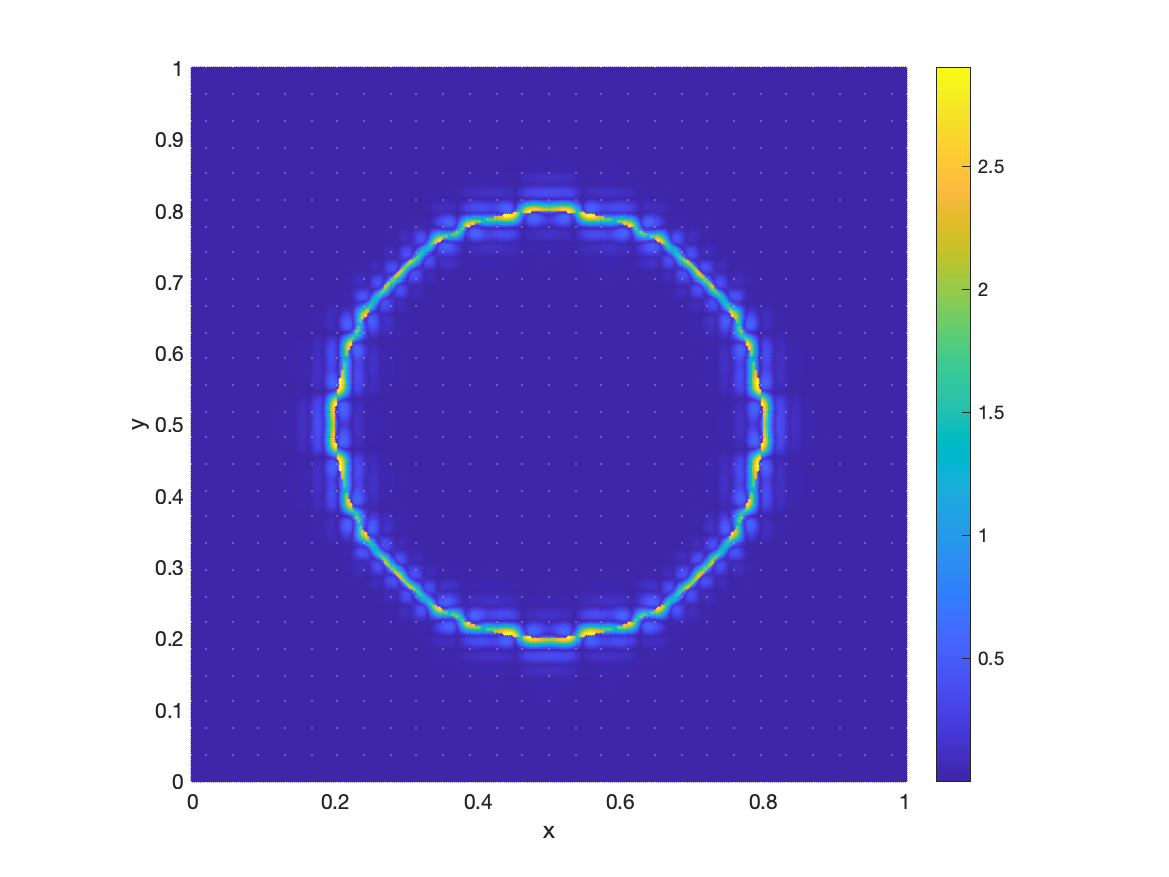} & 	\includegraphics[width=6cm]{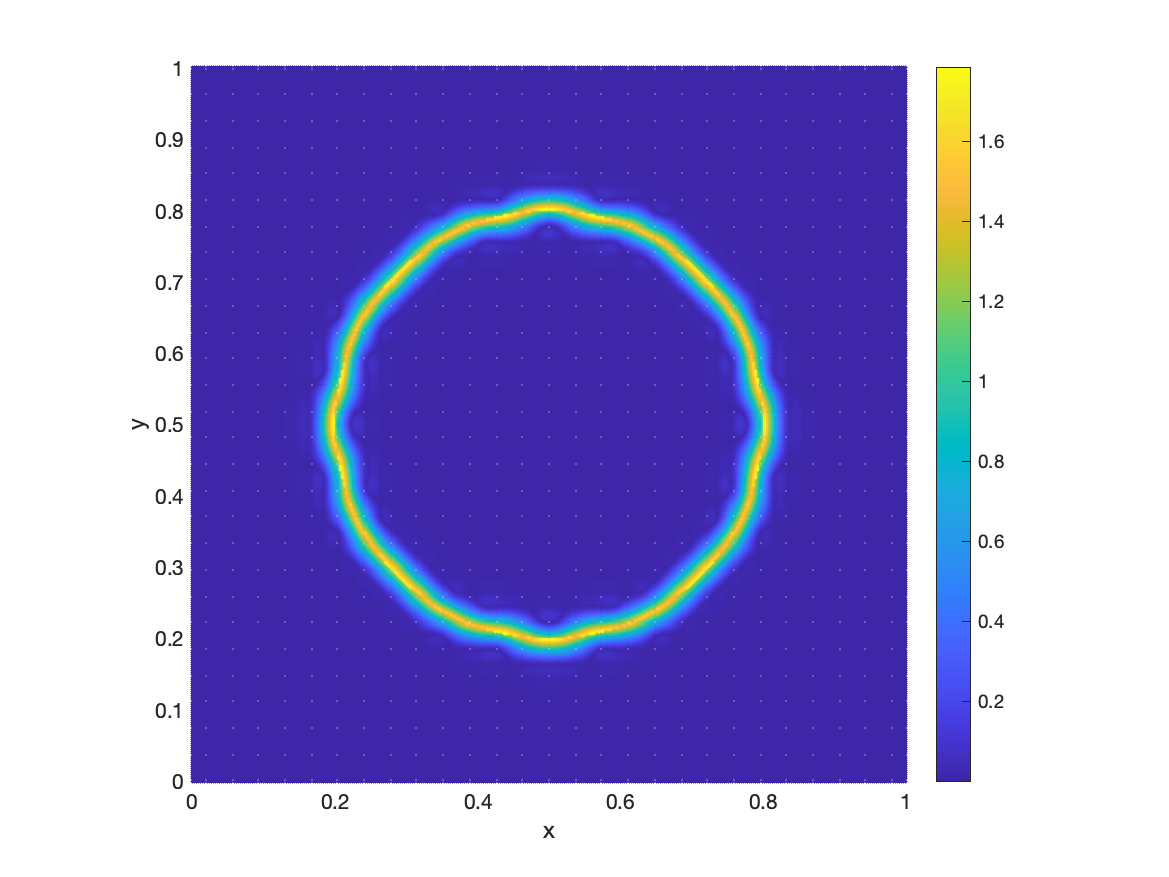}
		\end{tabular}
\end{center}
			\caption{Approximation of the function $f_1$, defined in Eq.~\eqref{frankesdisc}, using $n = 50 \times 50$ initial gridded data points. The left column shows the results obtained with the classical RBF$_{\text{M2}}$ algorithm, while the right column corresponds to the data-dependent DD-RBF$_{\text{M2}}$ algorithm. The first row displays the final approximations, and the second row presents the error distributions across the domain.}
		\label{exp5_2D}
	\end{figure}

	\begin{figure}[htbp!]
\begin{center}
		\begin{tabular}{cc}
	\includegraphics[width=6cm]{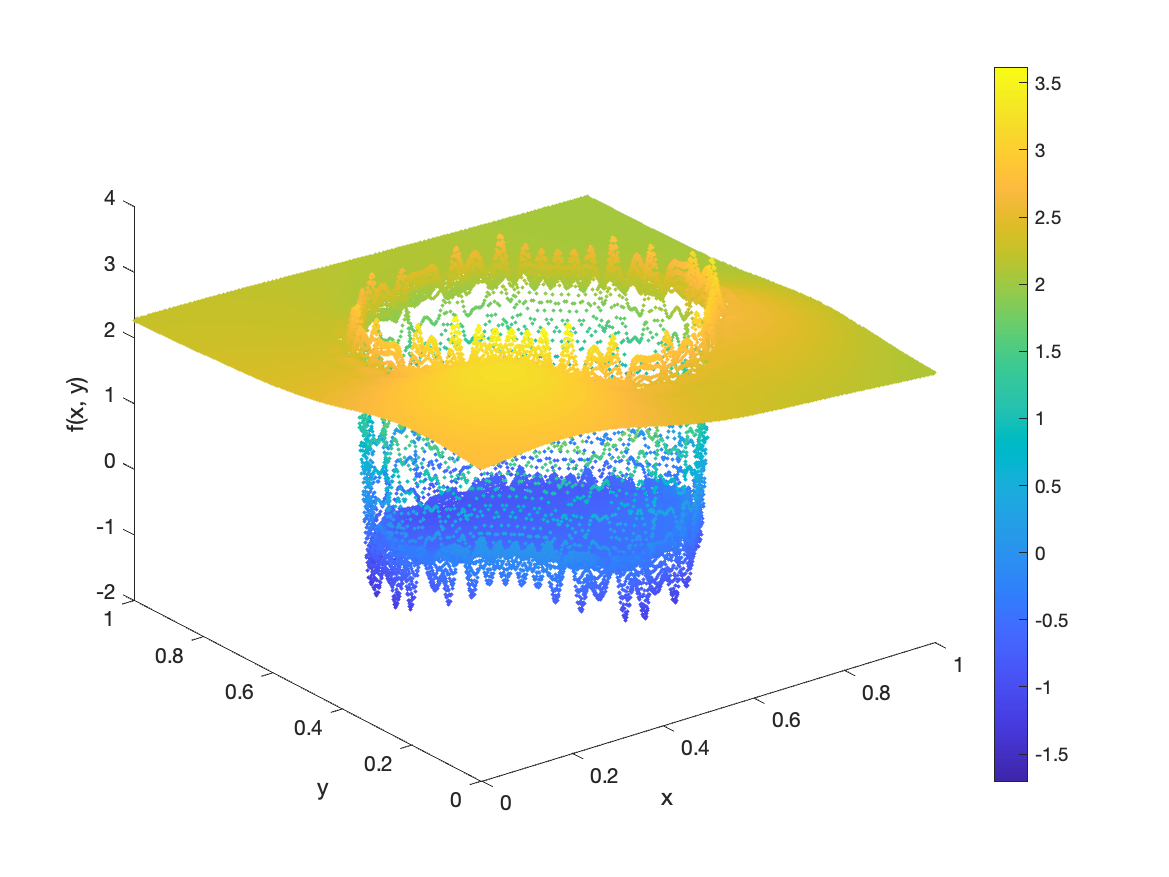} & 	\includegraphics[width=6cm]{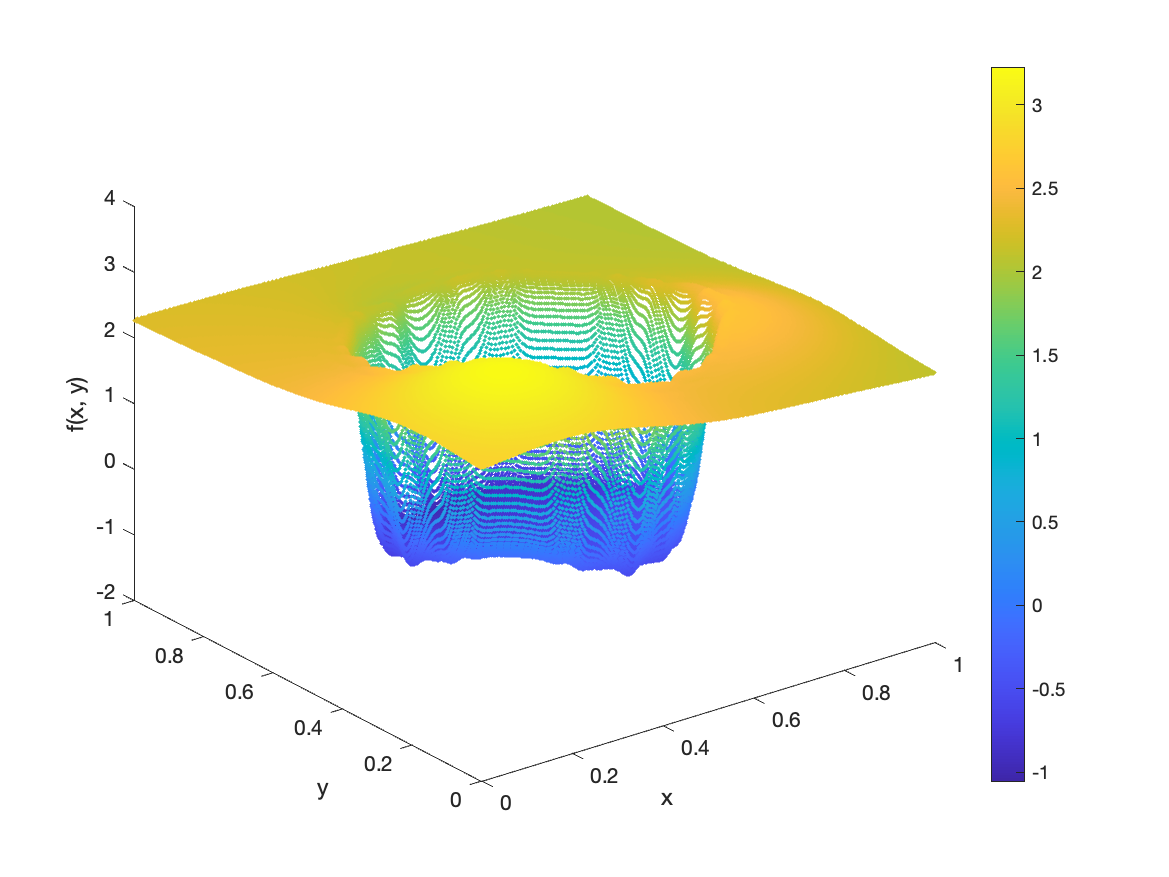}\\
	\includegraphics[width=6cm]{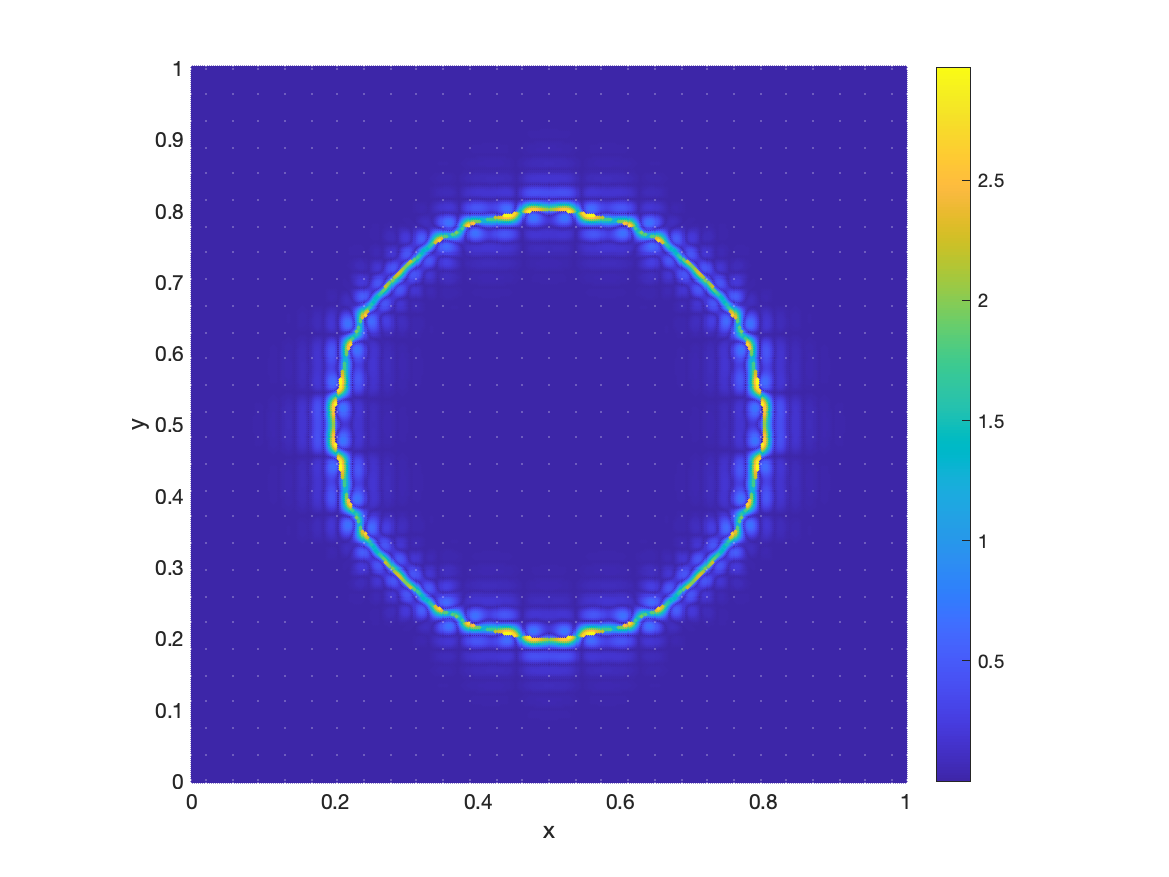} & 	\includegraphics[width=6cm]{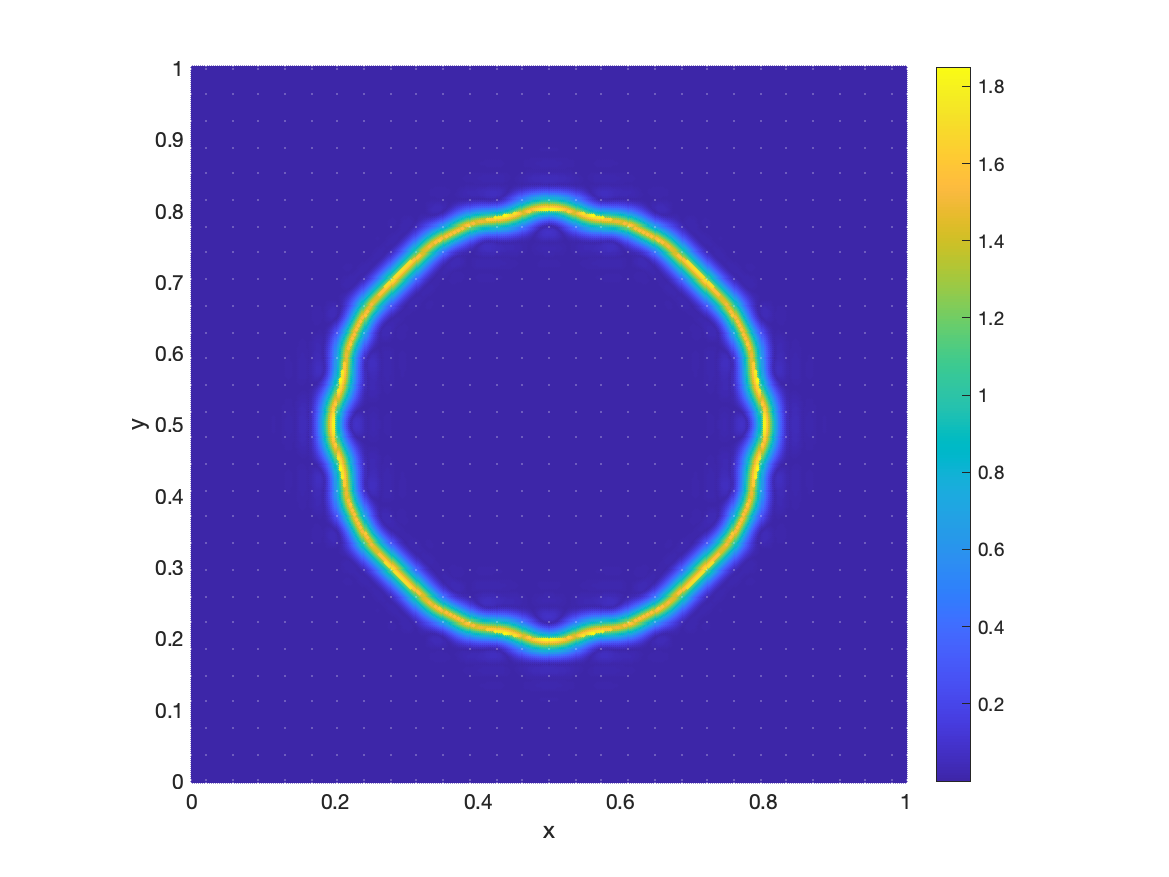}
		\end{tabular}
\end{center}
			\caption{Approximation of the function $f_1$, defined in Eq.~\eqref{frankesdisc}, using $n = 50 \times 50$ initial gridded data points. The left column shows the results obtained with the classical RBF$_{\text{M4}}$ algorithm, while the right column corresponds to the data-dependent DD-RBF$_{\text{M4}}$ algorithm. The first row displays the final approximations, and the second row presents the error distributions across the domain.}
		\label{exp6_2D}
	\end{figure}

\begin{table}[ht]
\centering
\begin{tabular}{|l|c|c|}
\hline
\textbf{Kernel} & $\kappa$ \textbf{(Classical)} & $\kappa$ \textbf{(data-dependent)} \\
\hline
RBF$_{\text{G}}$ & 7.9186e+07 & 4.1700e+07 \\
RBF$_{\text{IMQ}}$ & 1.7755e+05 & 1.6204e+05 \\
RBF$_{\text{W2}}$ & 1.6265e+04 & 1.5306e+04 \\
RBF$_{\text{W4}}$ & 1.3790e+05 & 1.2965e+05 \\
RBF$_{\text{M2}}$ & 1.9659e+07 & 1.8260e+07 \\
RBF$_{\text{M4}}$ & 3.3058e+10 & 3.0535e+10 \\
\hline
\end{tabular}
\caption{Condition numbers $\kappa$ for classical and data-dependent RBF interpolation methods across different kernels for the experiments shown in Figures \ref{exp1_2D} to \ref{exp6_2D}, where the we have used uniform gridded data.}
\label{tabla_condicion_unif_2D}
\end{table}


	\begin{figure}[htbp!]
\begin{center}
		\begin{tabular}{cc}
	\includegraphics[width=6cm]{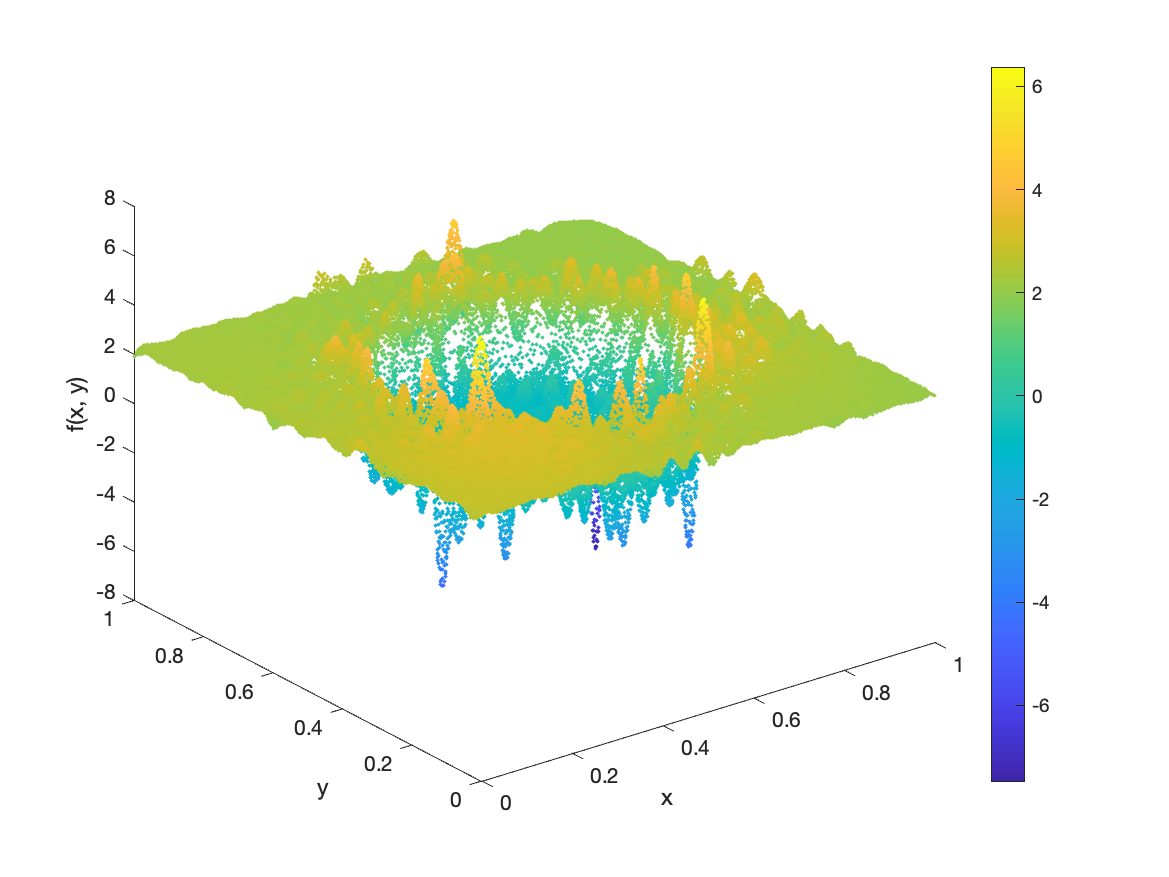} & 	\includegraphics[width=6cm]{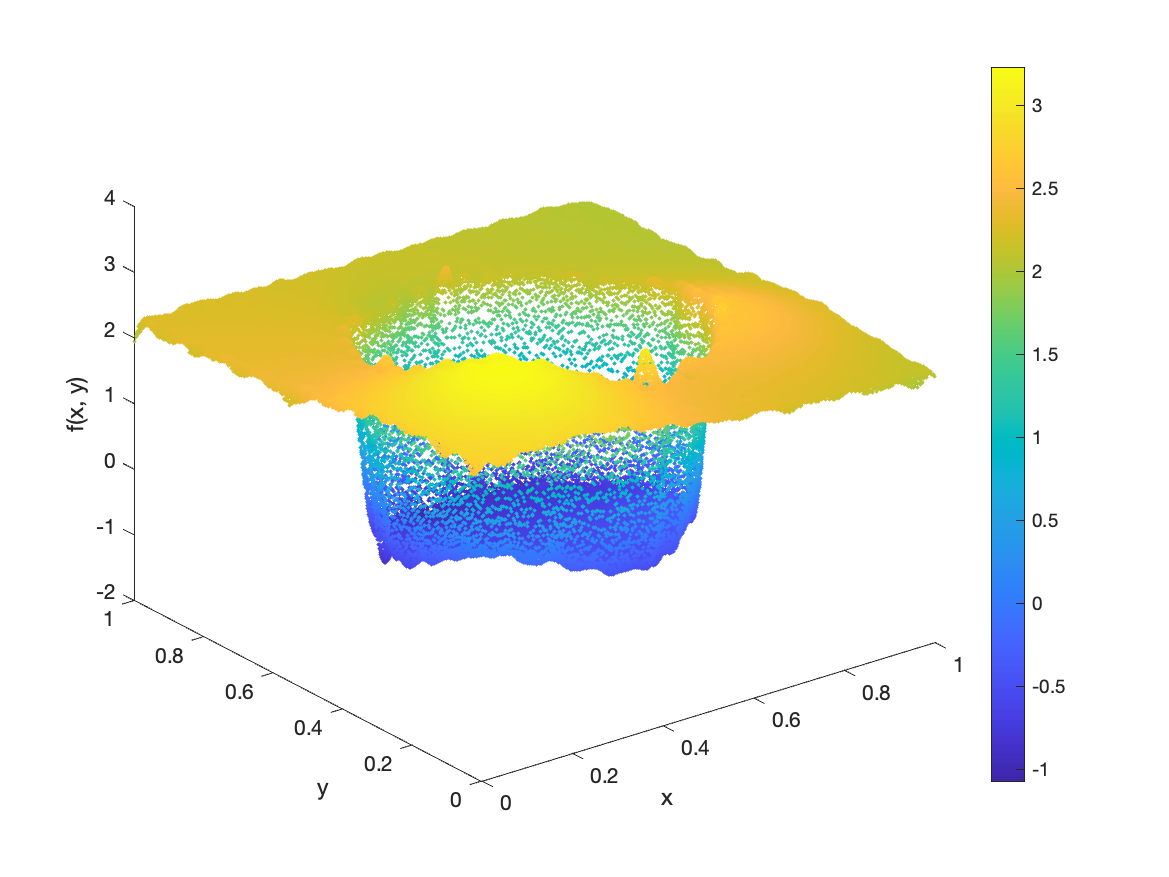}\\
	\includegraphics[width=6cm]{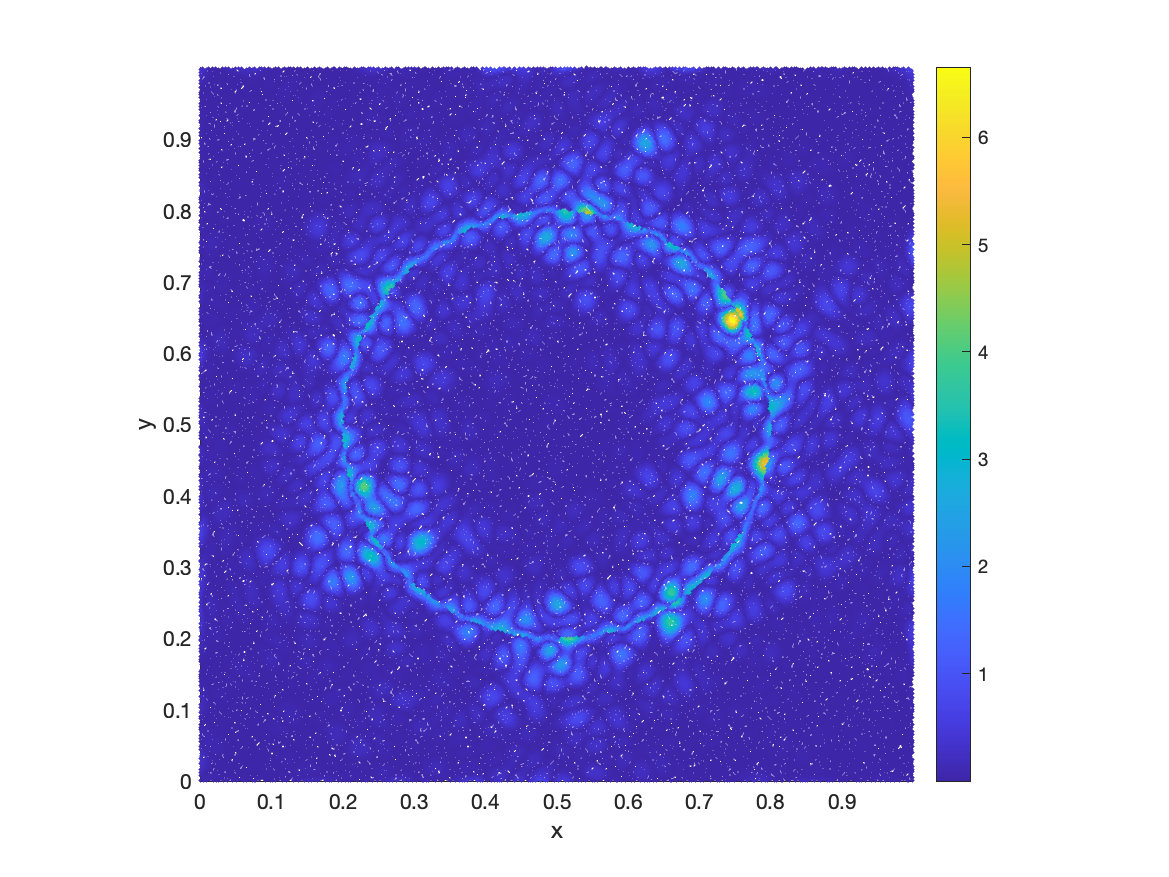} & 	\includegraphics[width=6cm]{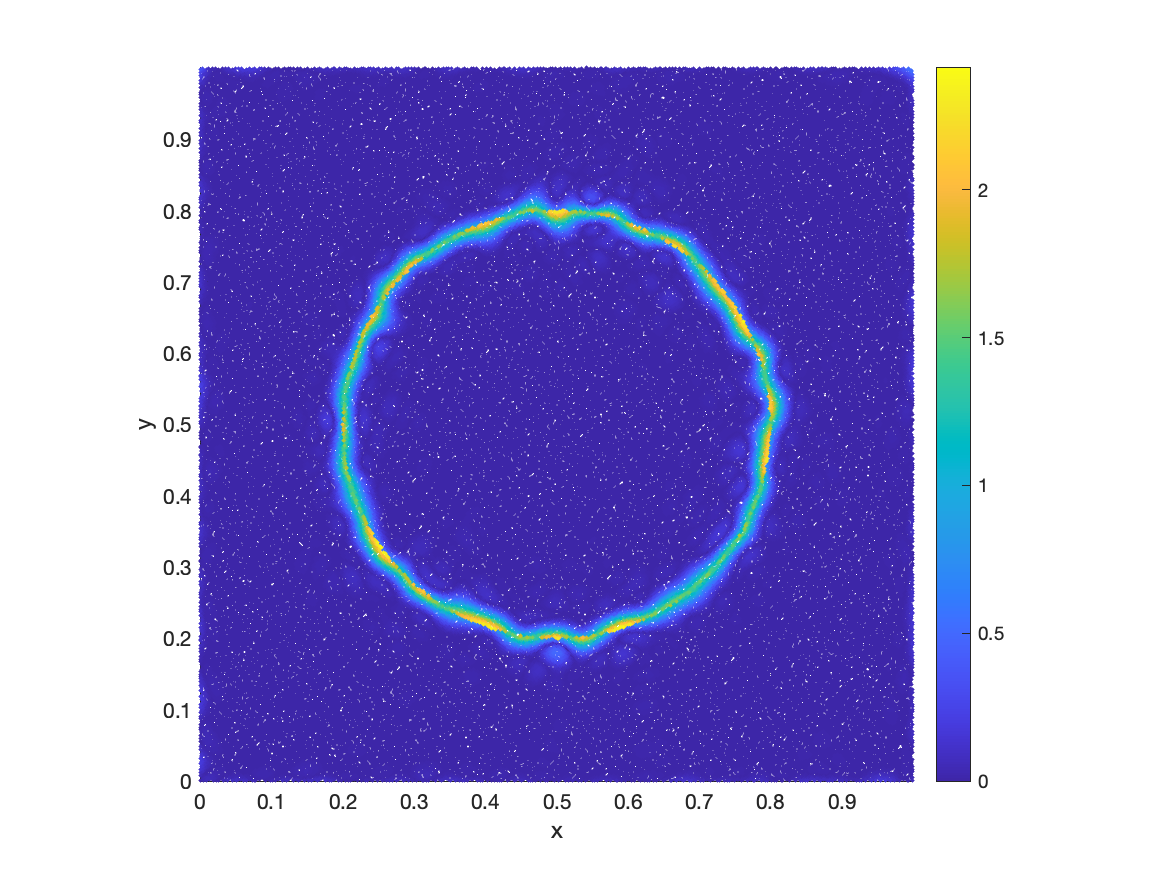}
		\end{tabular}
\end{center}
			\caption{Approximation of the function $f_1$, defined in Eq.~\eqref{frankesdisc}, using $n = 50 \times 50$ initial Halton points. The left column shows the results obtained with the classical RBF$_{\text{G}}$ algorithm, while the right column corresponds to the data-dependent DD-RBF$_{\text{G}}$ algorithm. The first row displays the final approximations, and the second row presents the error distributions across the domain.}
		\label{exp7_2D}
	\end{figure}

	\begin{figure}[htbp!]
\begin{center}
		\begin{tabular}{cc}
	\includegraphics[width=6cm]{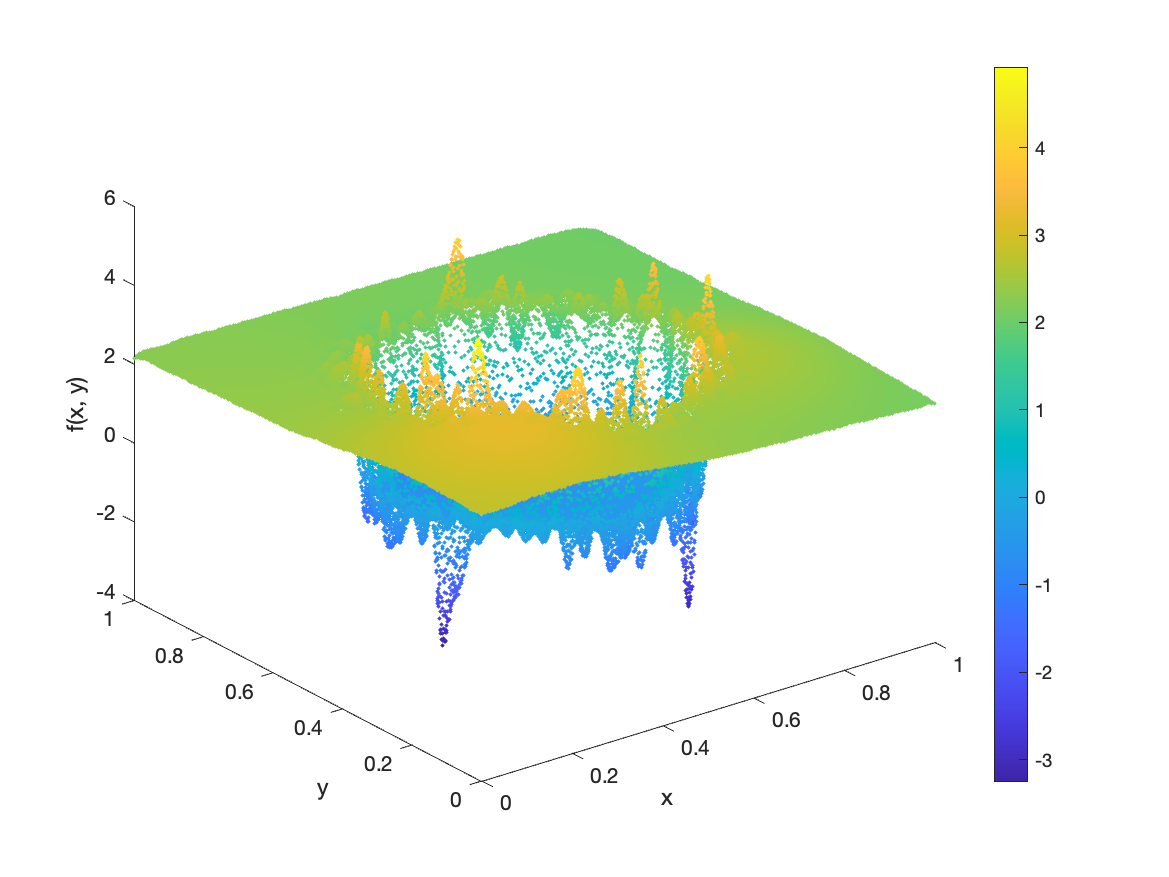} & 	\includegraphics[width=6cm]{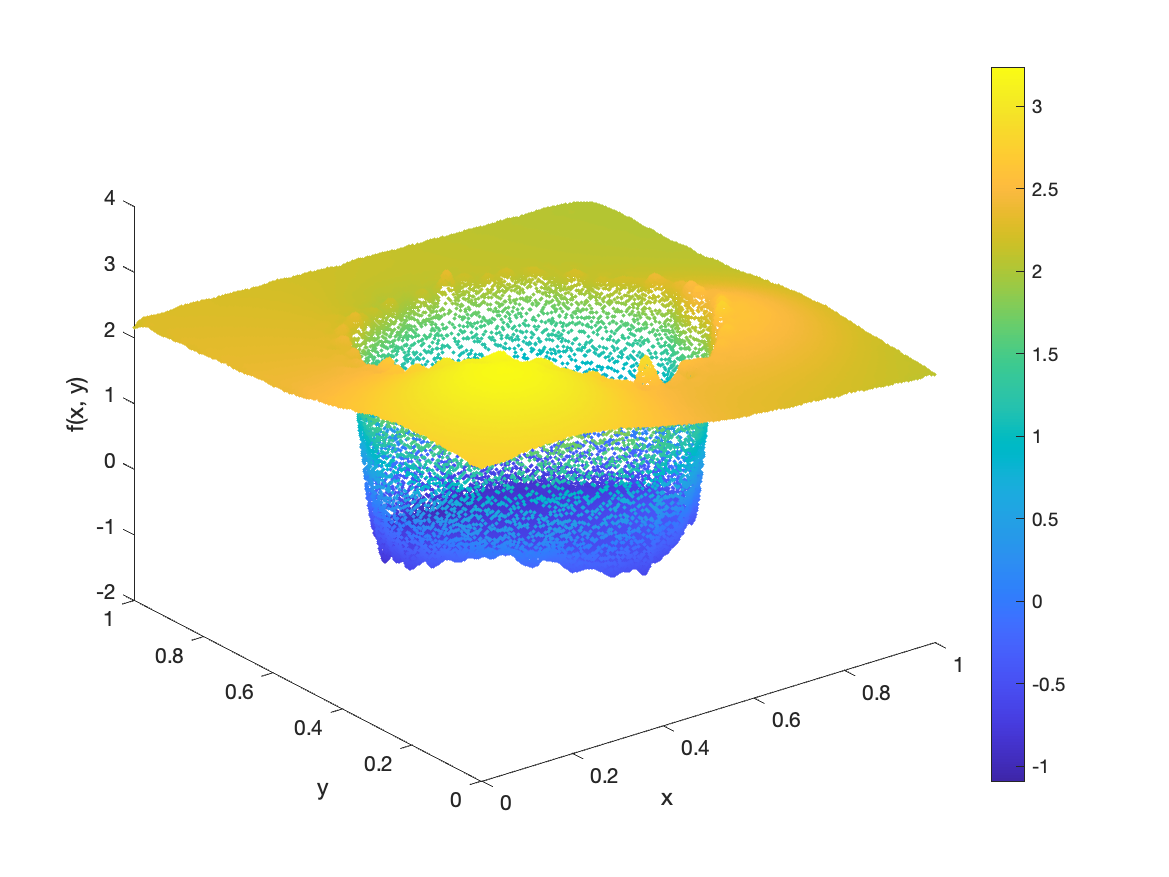}\\
	\includegraphics[width=6cm]{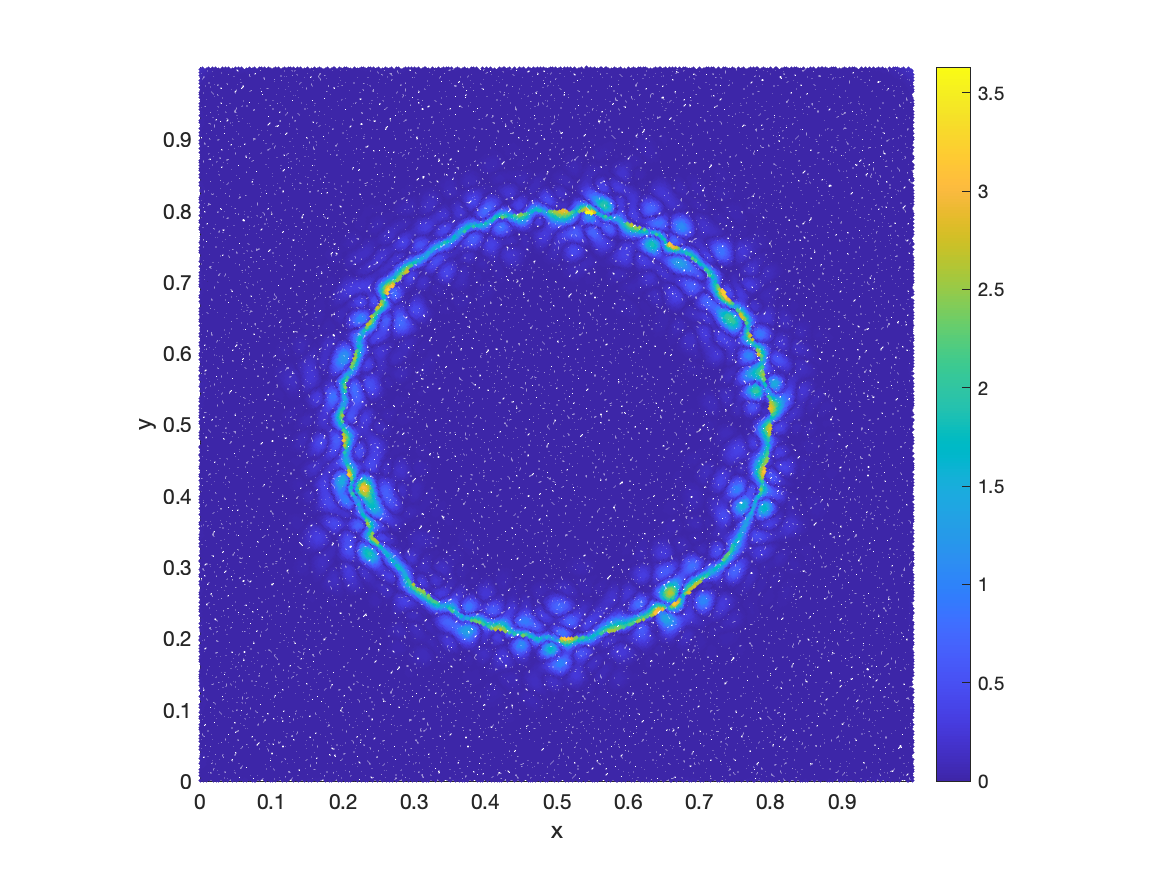} & 	\includegraphics[width=6cm]{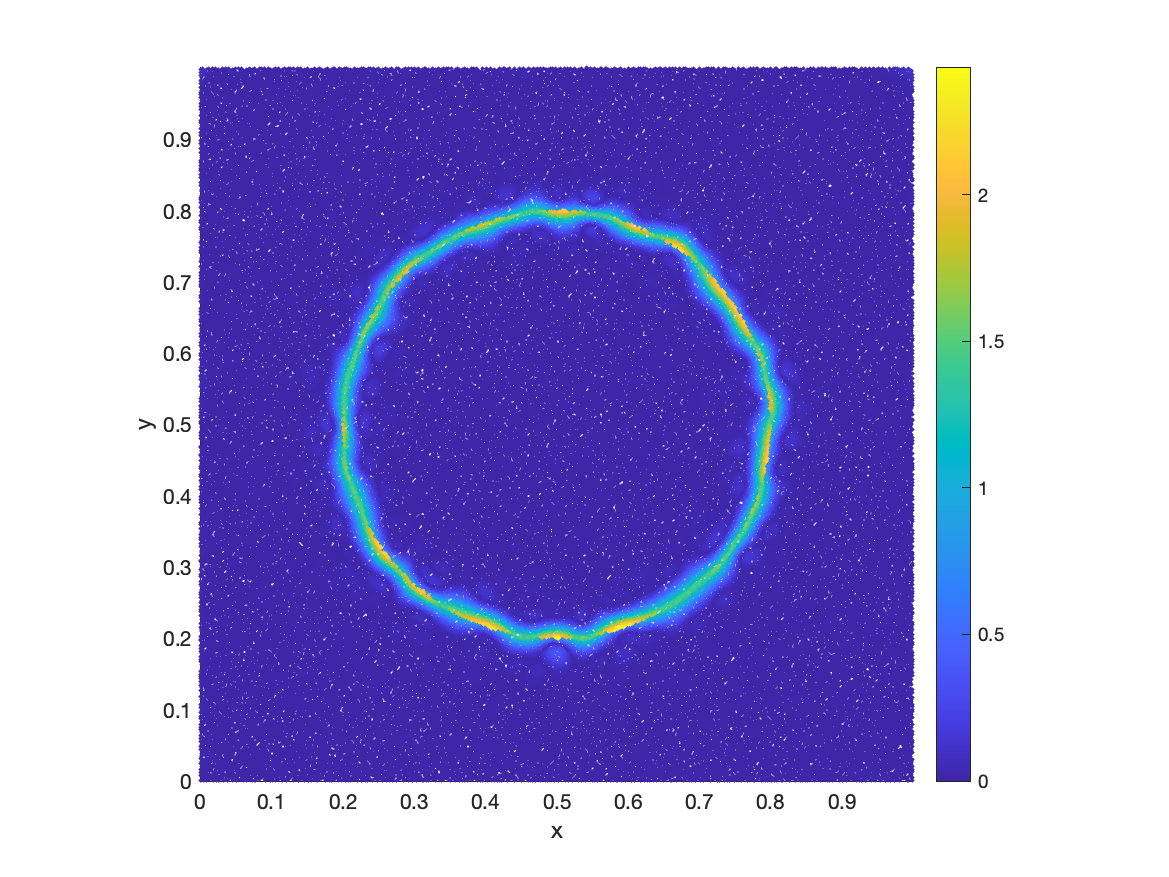}
		\end{tabular}
\end{center}
			\caption{Approximation of the function $f_1$, defined in Eq.~\eqref{frankesdisc}, using $n = 50 \times 50$ initial Halton points. The left column shows the results obtained with the classical RBF$_{\text{IMQ}}$ algorithm, while the right column corresponds to the data-dependent DD-RBF$_{\text{IMQ}}$ algorithm. The first row displays the final approximations, and the second row presents the error distributions across the domain.}
		\label{exp8_2D}
	\end{figure}

	\begin{figure}[htbp!]
\begin{center}
		\begin{tabular}{cc}
	\includegraphics[width=6cm]{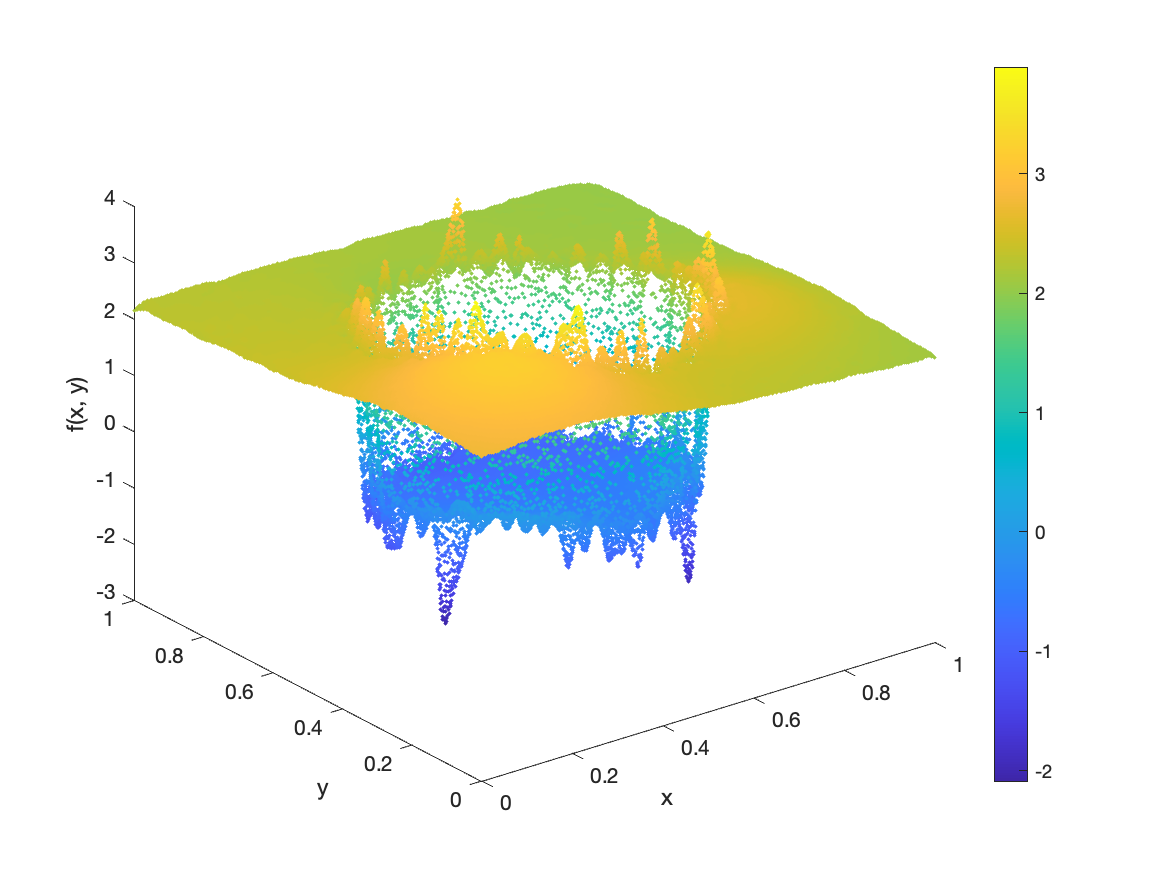} & 	\includegraphics[width=6cm]{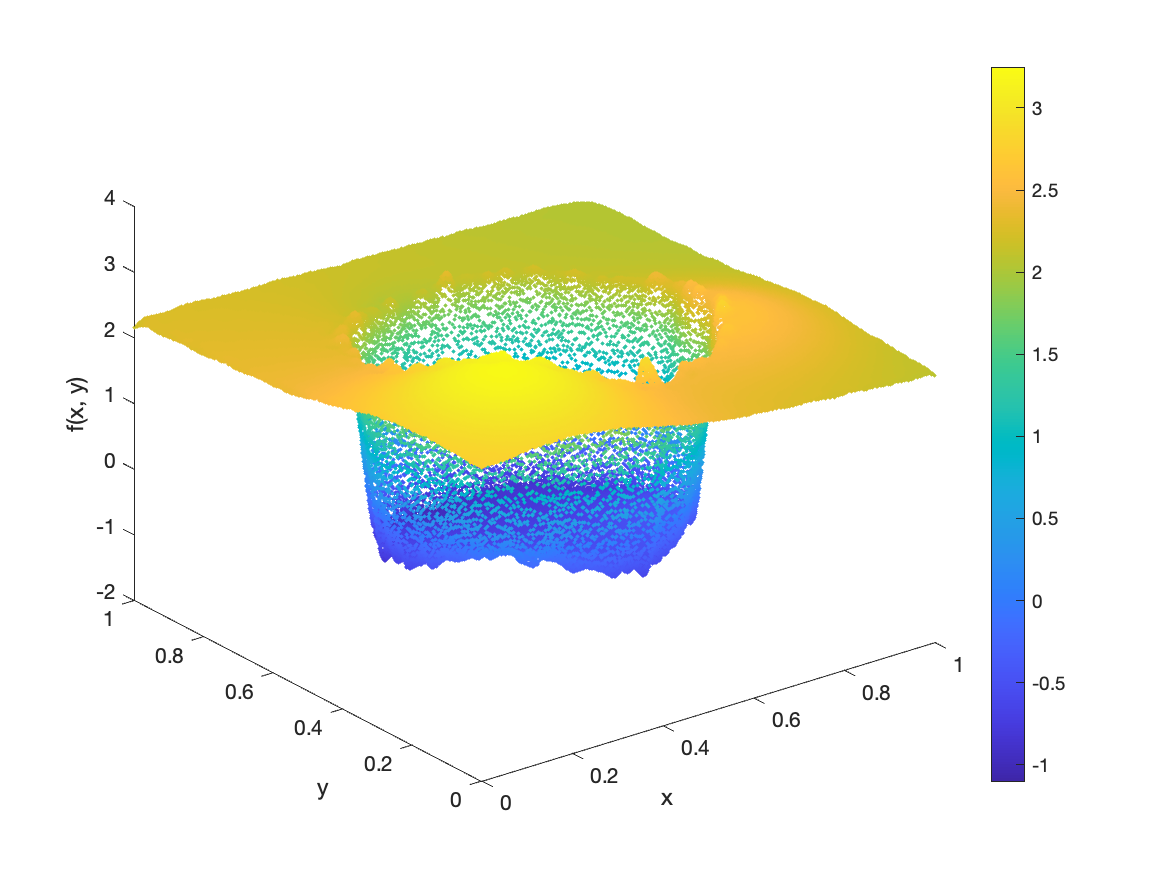}\\
	\includegraphics[width=6cm]{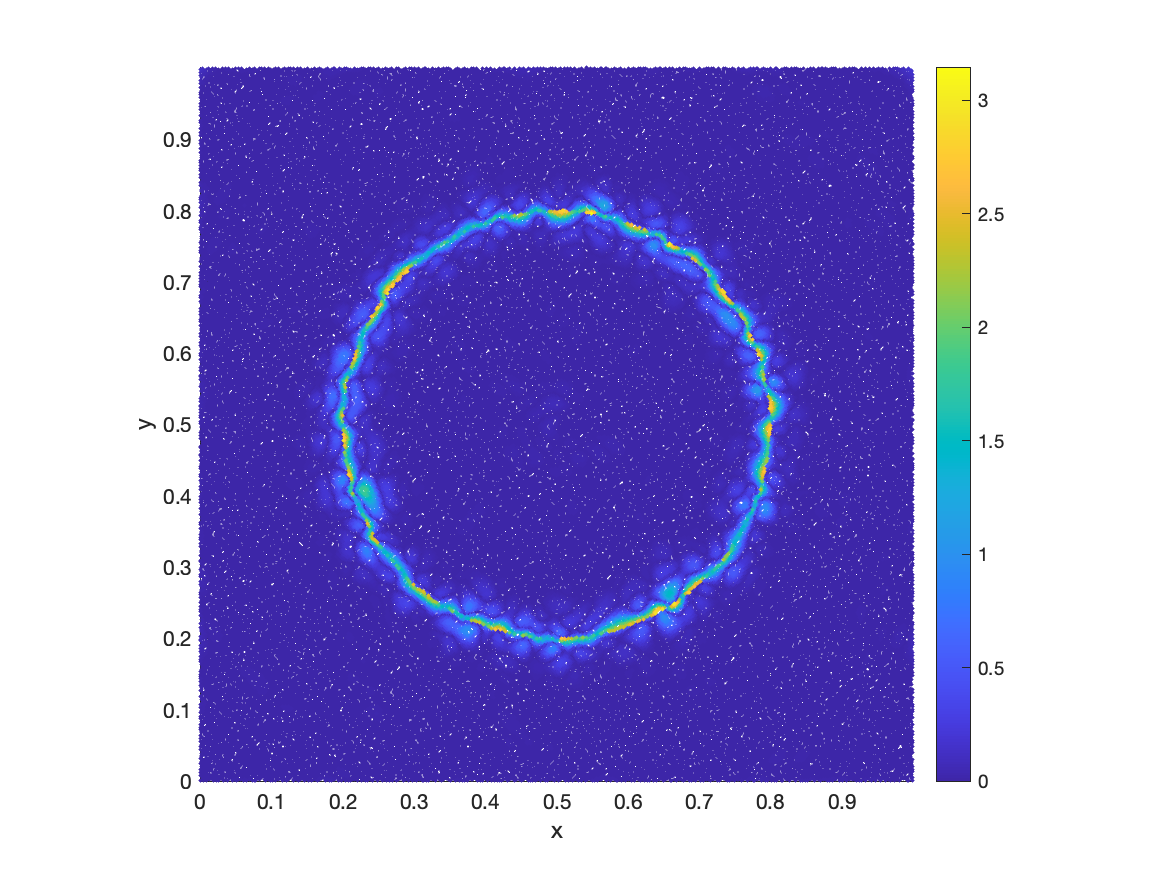} & 	\includegraphics[width=6cm]{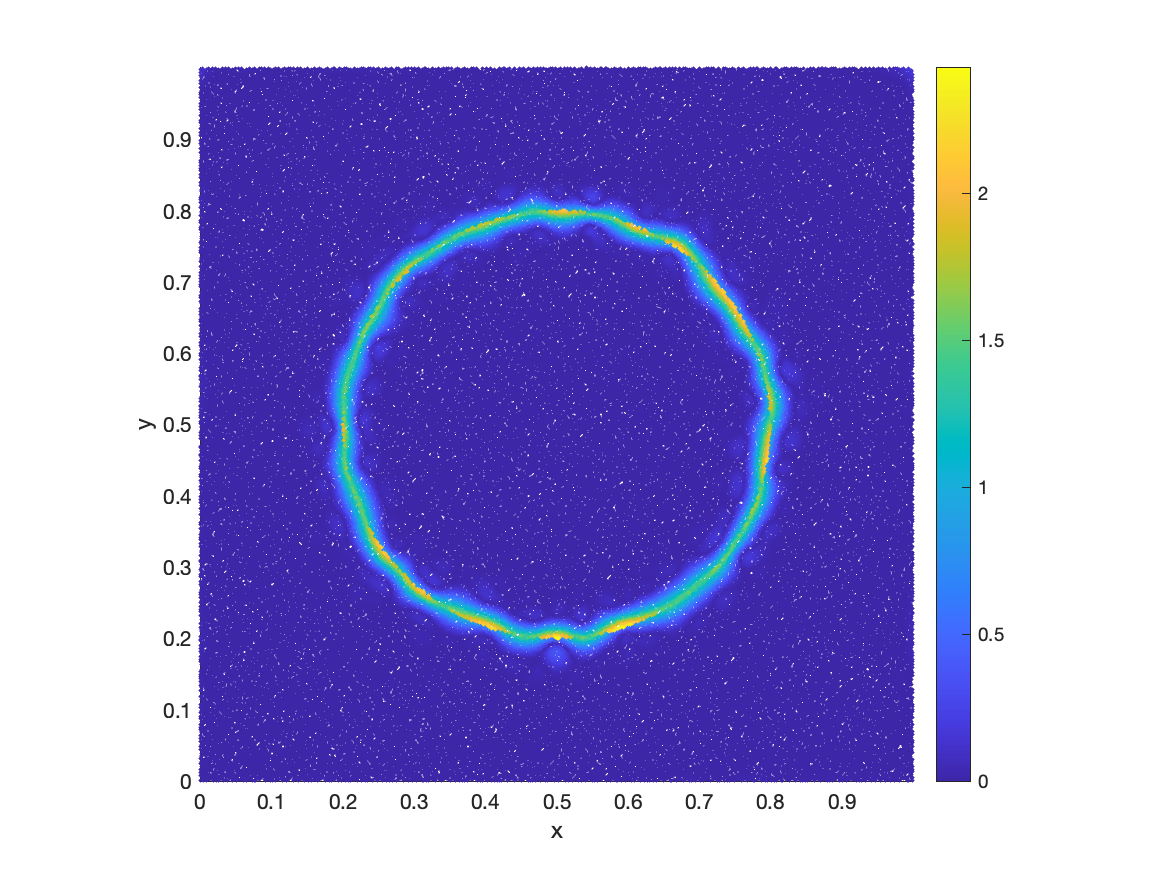}
		\end{tabular}
\end{center}
			\caption{Approximation of the function $f_1$, defined in Eq.~\eqref{frankesdisc}, using $n = 50 \times 50$ initial Halton points. The left column shows the results obtained with the classical RBF$_{\text{W2}}$ algorithm, while the right column corresponds to the data-dependent DD-RBF$_{\text{W2}}$ algorithm. The first row displays the final approximations, and the second row presents the error distributions across the domain.}
		\label{exp9_2D}
	\end{figure}

	\begin{figure}[htbp!]
\begin{center}
		\begin{tabular}{cc}
	\includegraphics[width=6cm]{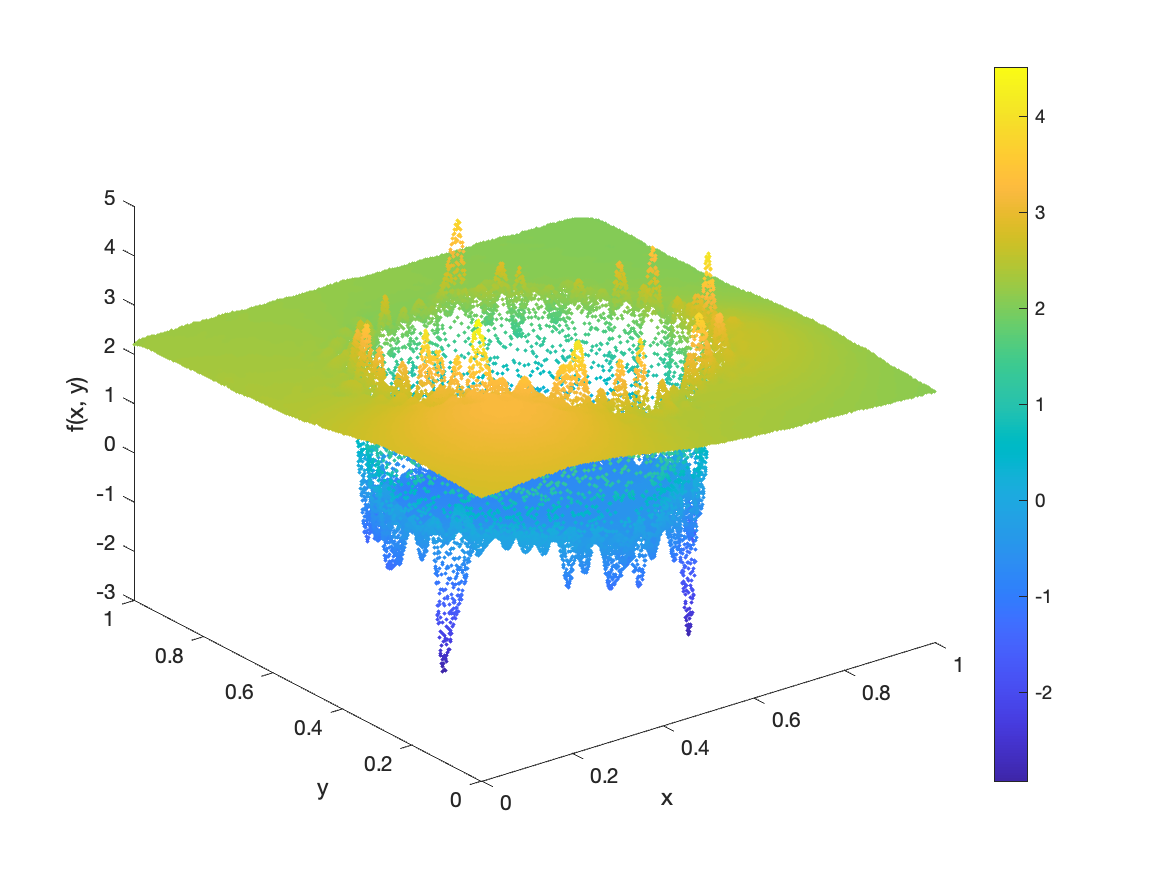} & 	\includegraphics[width=6cm]{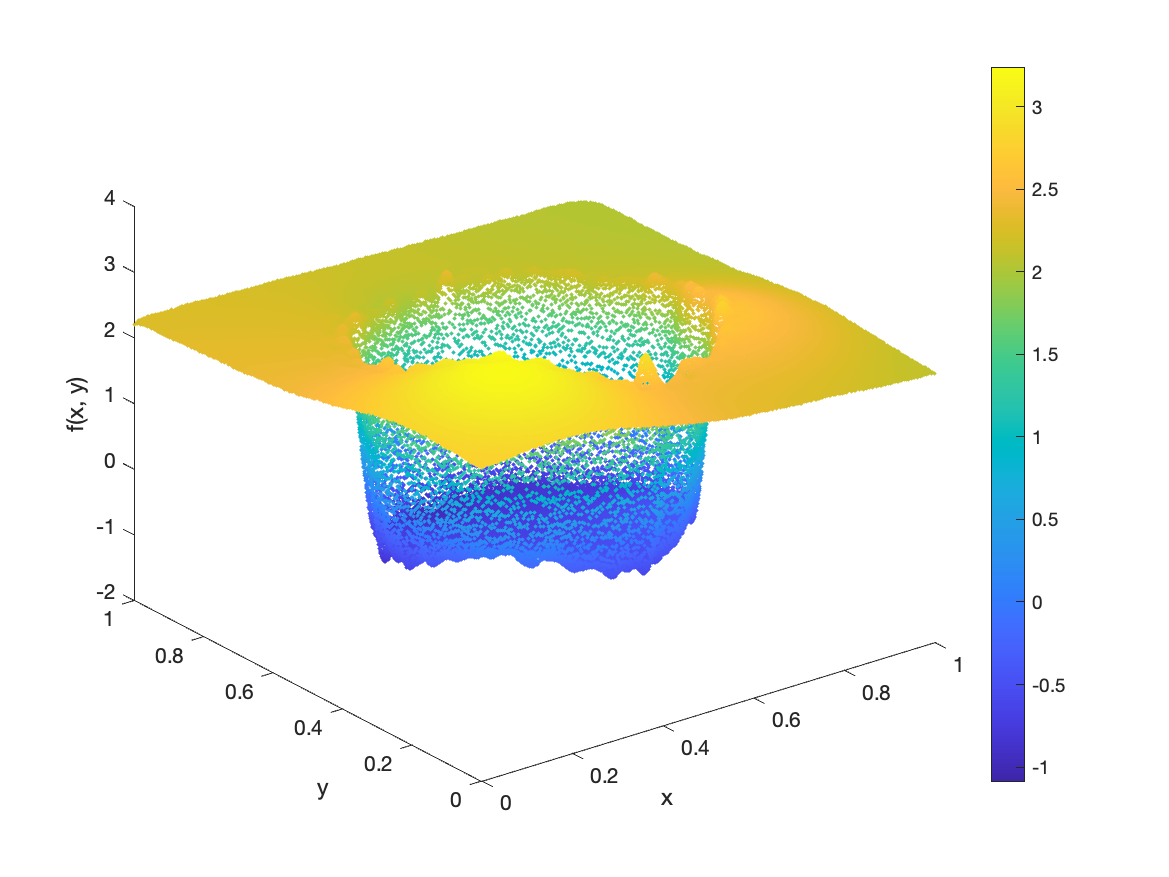}\\
	\includegraphics[width=6cm]{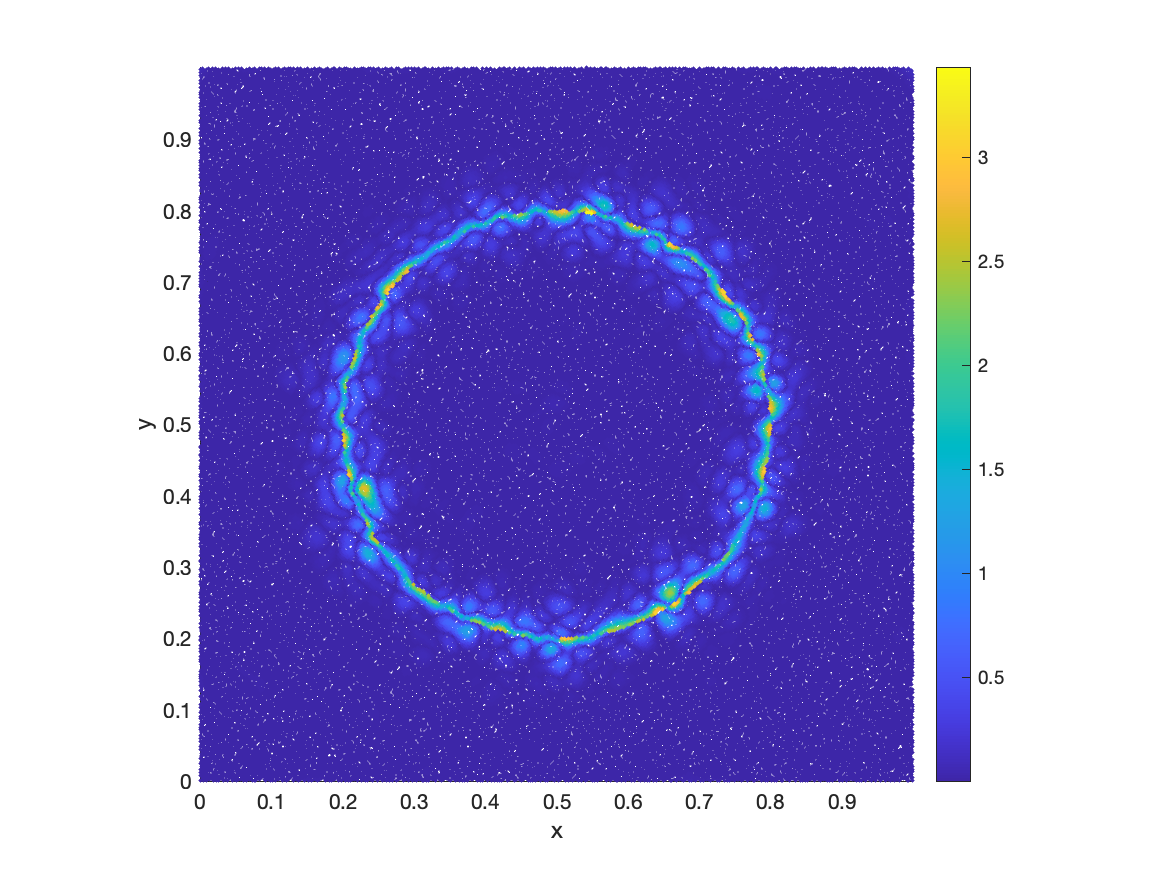} & 	\includegraphics[width=6cm]{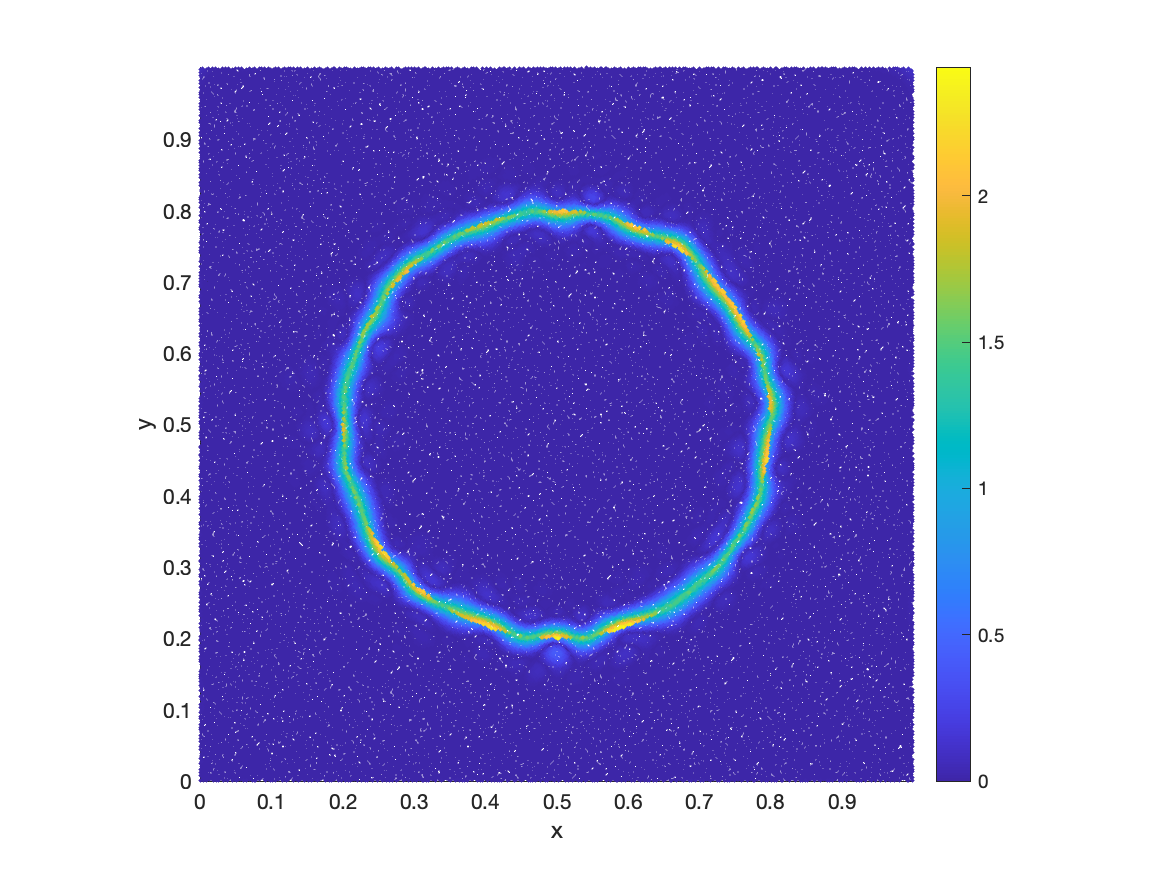}
		\end{tabular}
\end{center}
			\caption{Approximation of the function $f_1$, defined in Eq.~\eqref{frankesdisc}, using $n = 50 \times 50$ initial Halton points. The left column shows the results obtained with the classical RBF$_{\text{W4}}$ algorithm, while the right column corresponds to the data-dependent DD-RBF$_{\text{W4}}$ algorithm. The first row displays the final approximations, and the second row presents the error distributions across the domain.}
		\label{exp10_2D}
	\end{figure}

	\begin{figure}[htbp!]
\begin{center}
		\begin{tabular}{cc}
	\includegraphics[width=6cm]{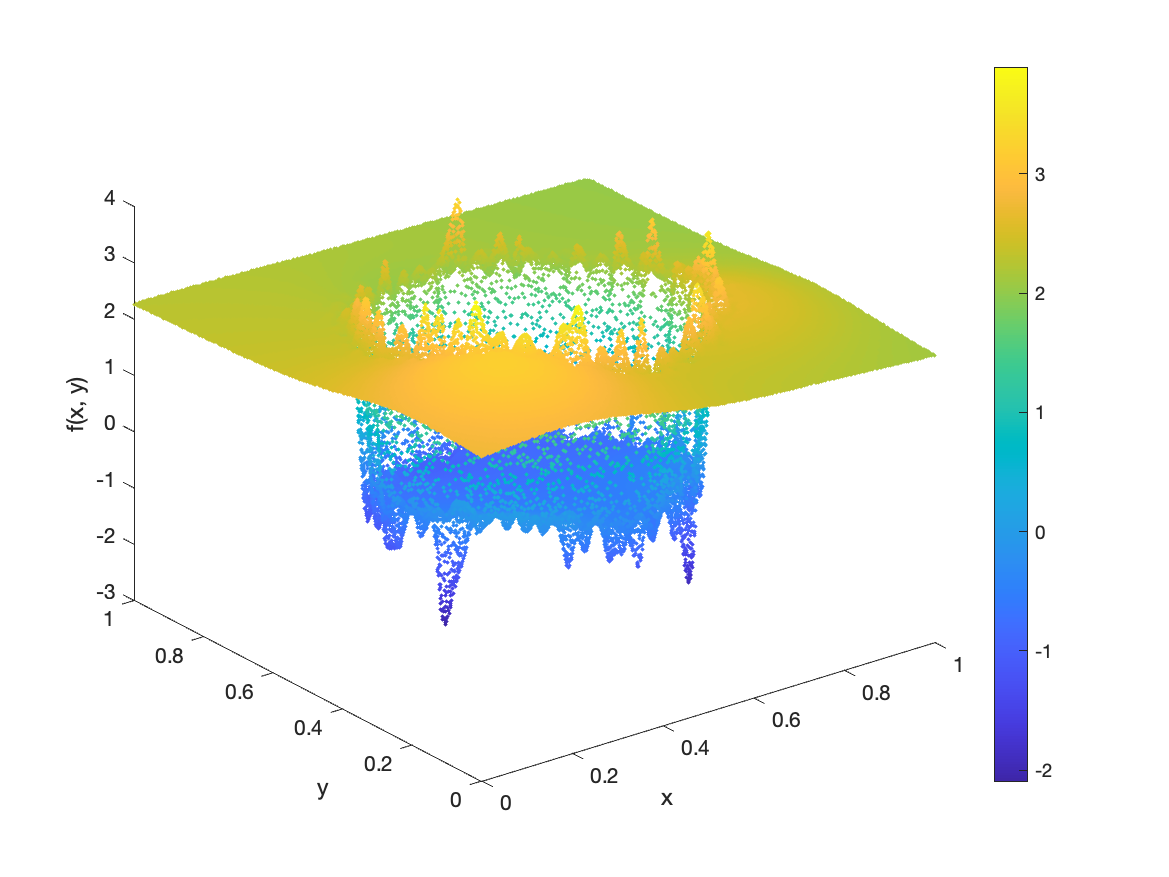} & 	\includegraphics[width=6cm]{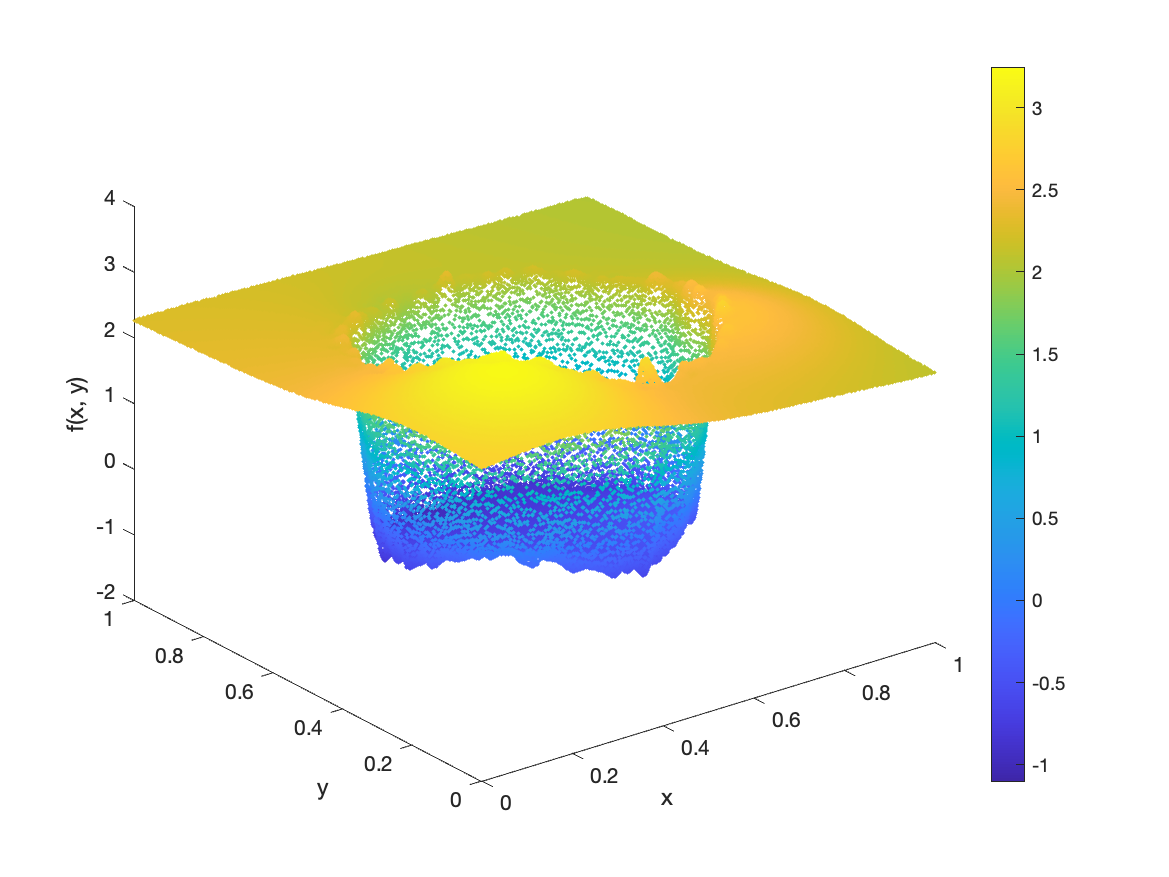}\\
	\includegraphics[width=6cm]{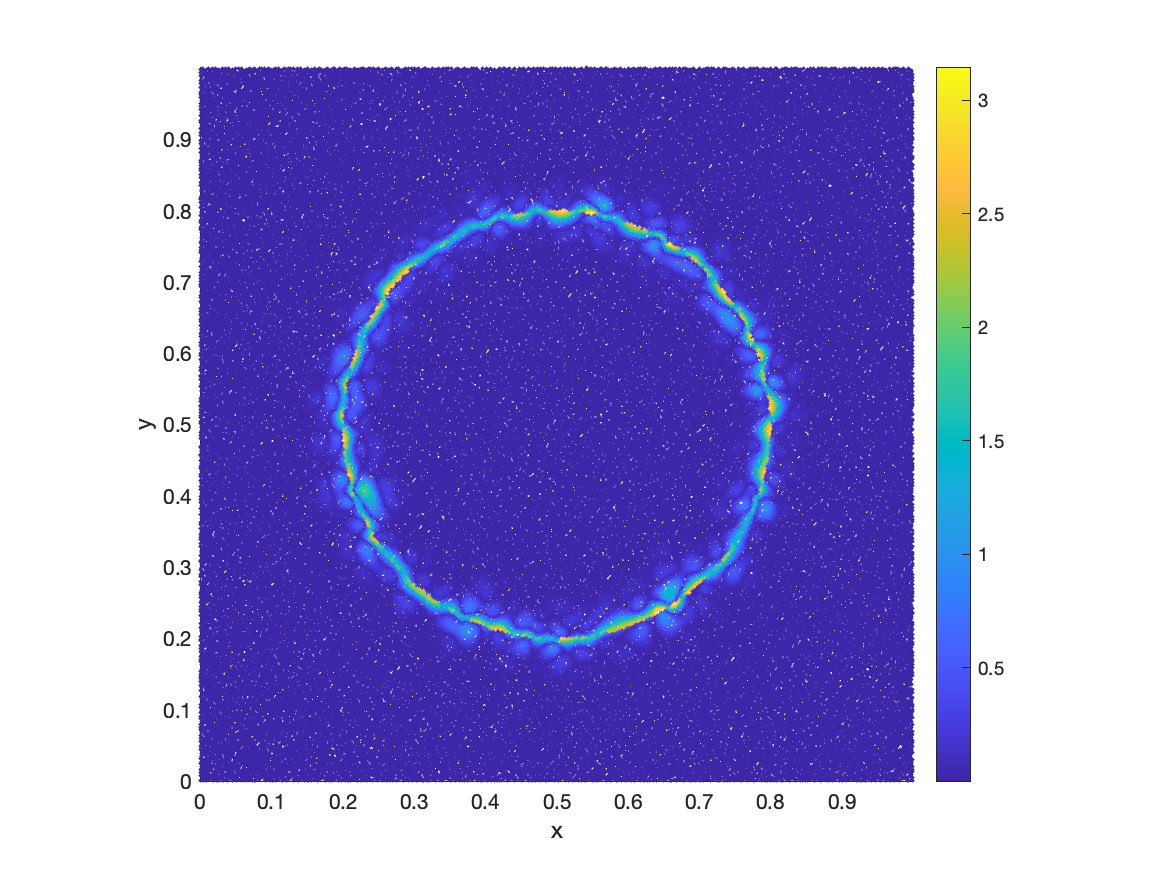} & 	\includegraphics[width=6cm]{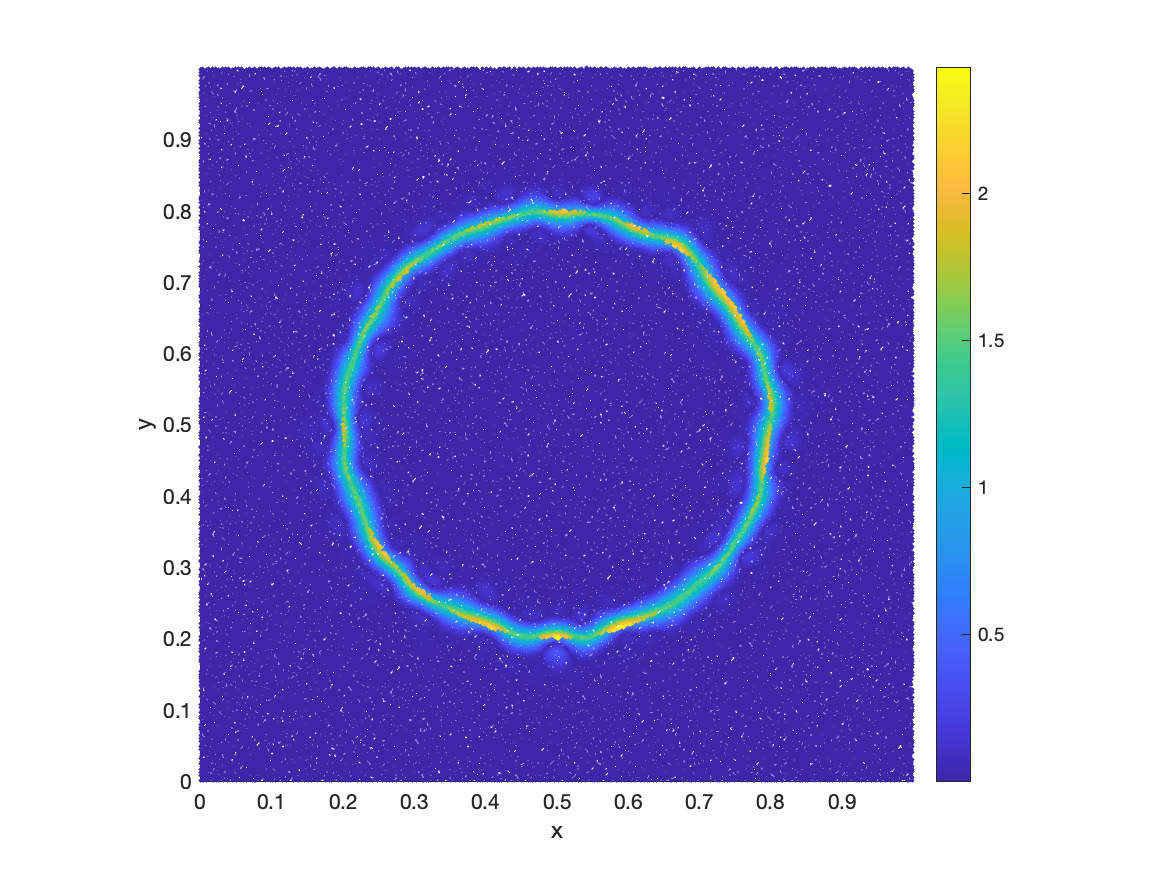}
		\end{tabular}
\end{center}
			\caption{Approximation of the function $f_1$, defined in Eq.~\eqref{frankesdisc}, using $n = 50 \times 50$ initial Halton points. The left column shows the results obtained with the classical RBF$_{\text{M2}}$ algorithm, while the right column corresponds to the data-dependent DD-RBF$_{\text{M2}}$ algorithm. The first row displays the final approximations, and the second row presents the error distributions across the domain.}
		\label{exp11_2D}
	\end{figure}

	\begin{figure}[htbp!]
\begin{center}
		\begin{tabular}{cc}
	\includegraphics[width=6cm]{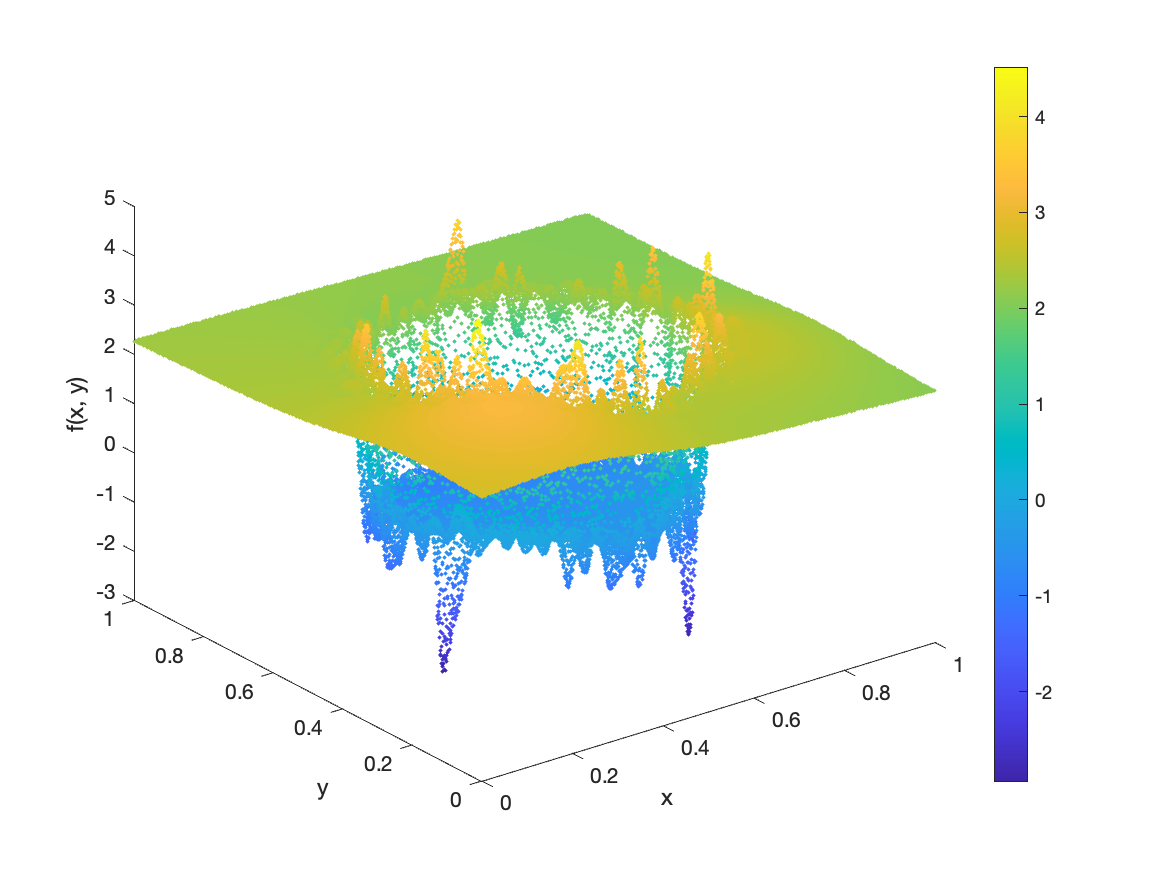} & 	\includegraphics[width=6cm]{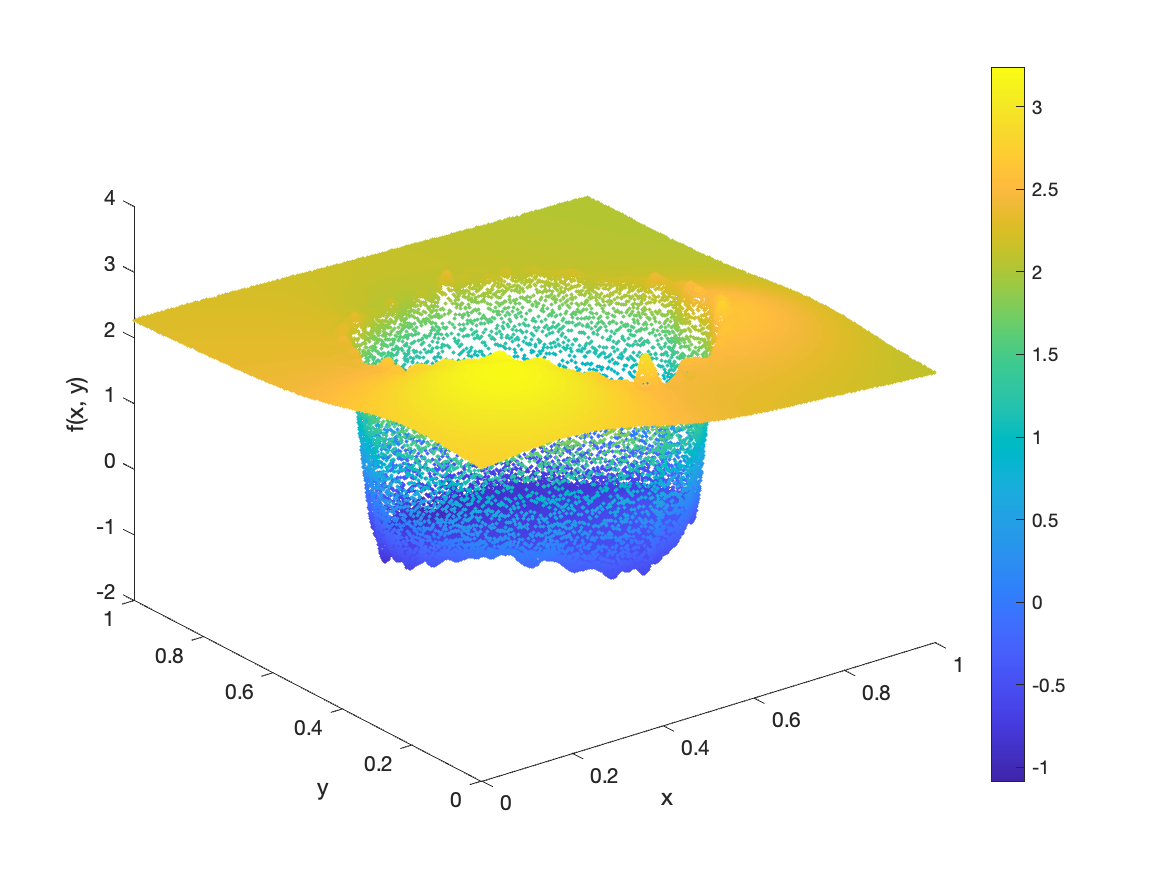}\\
	\includegraphics[width=6cm]{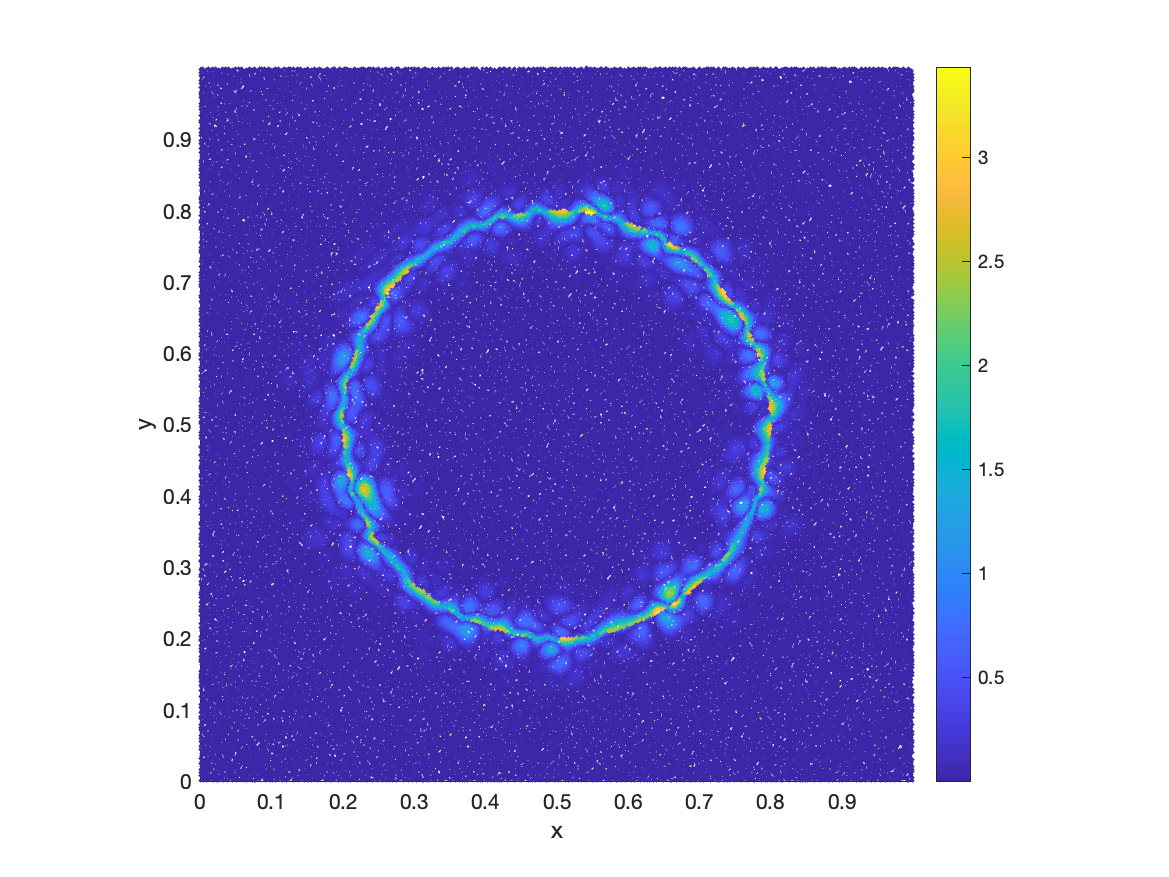} & 	\includegraphics[width=6cm]{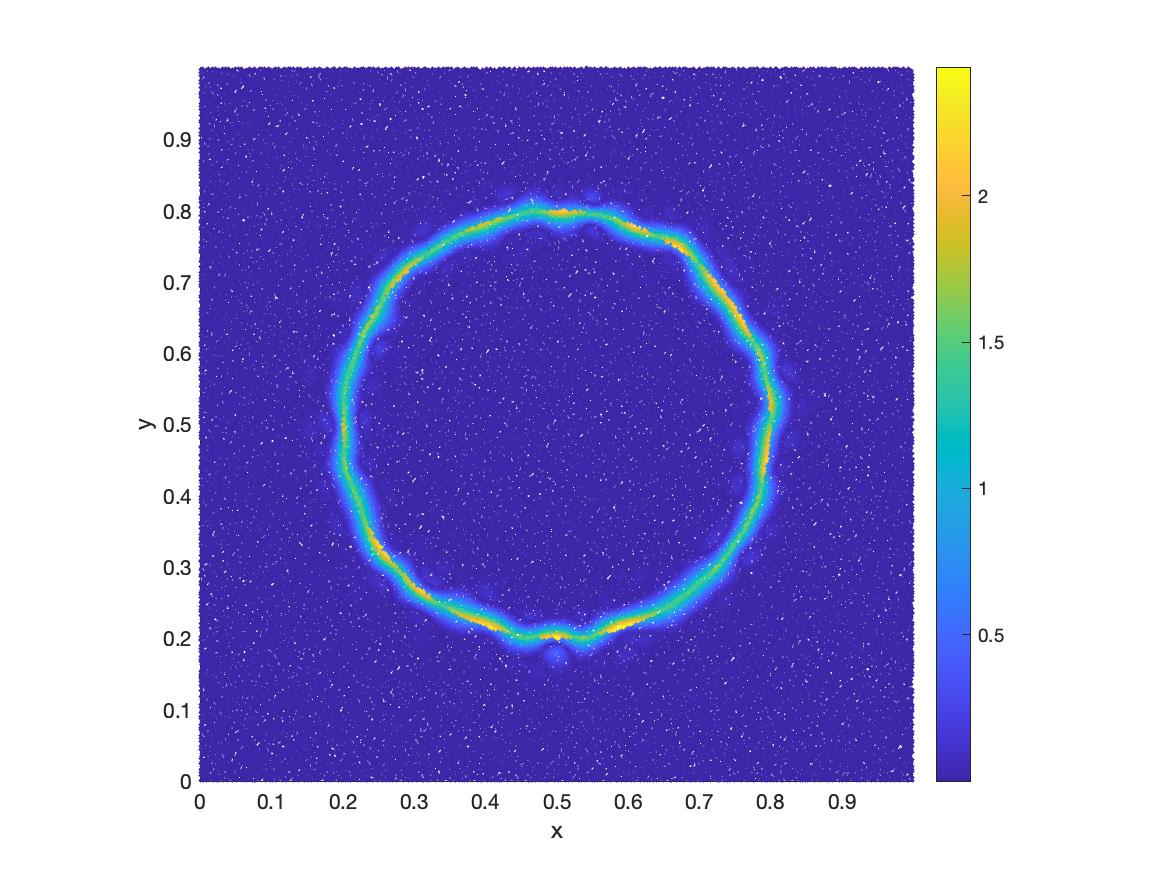}
		\end{tabular}
\end{center}
			\caption{Approximation of the function $f_1$, defined in Eq.~\eqref{frankesdisc}, using $n = 50 \times 50$ initial Halton points. The left column shows the results obtained with the classical RBF$_{\text{M4}}$ algorithm, while the right column corresponds to the data-dependent DD-RBF$_{\text{M4}}$ algorithm. The first row displays the final approximations, and the second row presents the error distributions across the domain.}
		\label{exp12_2D}
	\end{figure}

\begin{table}[ht]
\centering

\begin{tabular}{|l|c|c|}
\hline
\textbf{Kernel} & $\kappa$ \textbf{(Classical)} & $\kappa$ \textbf{(data-dependent)} \\
\hline
RBF$_{\text{G}}$ & 3.9606e+06 & 3.6626e+06 \\
RBF$_{\text{IMQ}}$ & 8.0937e+05 & 7.7192e+05 \\
RBF$_{\text{W2}}$ & 2.5546e+05 & 2.4316e+05 \\
RBF$_{\text{W4}}$ & 5.5925e+06 & 5.3440e+06 \\
RBF$_{\text{M2}}$ & 2.3368e+08 & 2.2289e+08 \\
RBF$_{\text{M4}}$ & 9.3723e+11 & 8.9575e+11 \\
\hline
\end{tabular}
\caption{Condition numbers $\kappa$ for classical and data-dependent RBF interpolation methods across different kernels for the experiments shown in Figures \ref{exp7_2D} to \ref{exp12_2D}, where the we have used Halton points.}
\label{tabla_condicion_nounif_2D}
\end{table}

\section{Conclusions}
In this work, we have introduced a data-dependent modification of classical Radial Basis Function interpolation techniques aimed at reducing oscillations near discontinuities in both one and two dimensions. By adaptively varying the shape parameter of the RBFs, forcing it to tend to infinity near discontinuities, we achieve kernel functions that locally resemble delta functions and that we lately eliminate from the interpolation, effectively suppressing spurious oscillations.

The discontinuity detection mechanism, based on smoothness indicators designed for both gridded and scattered data, plays a central role in guiding the adaptive adjustment. For gridded data, we use squared undivided second-order differences, while for scattered data, we rely on least squares approximations of the Laplacian scaled by the square of the mean local separation.

We have proven the invertibility of the resulting interpolation matrix and given a method for solving the matrix by blocks that assures that the condition number remains comparable to that of systems where discontinuity-adjacent points are excluded. 

Extensive numerical experiments confirm the effectiveness of the proposed method. We have shown that for piecewise smooth univariate and bivariate data, the data-dependent algorithm significantly reduces oscillations along discontinuity curves compared to its classical counterpart. While the classical method introduces severe oscillations that propagate far from the discontinuity, the data-dependent approach mitigates these effects, albeit with some smearing. Importantly, the condition numbers for both methods remain similar, with slightly better behavior observed for the data-dependent algorithm.

These results hold consistently across both gridded and Halton-distributed data, demonstrating the robustness of the method. Overall, the proposed data-dependent RBF interpolation technique offers a reliable and accurate tool for reconstructing functions with discontinuities, with potential applications in several fields.

\section*{ Declaration of generative AI and AI-assisted technologies in the manuscript preparation process}

During the preparation of this work, the author(s) used Microsoft Copilot (based on GPT technology) to check English grammar, improve clarity, and assist in rephrasing some sentences for better readability. After using this tool, the author(s) reviewed and edited the content as needed and take full responsibility for the content of the published article.

\end{document}